\documentclass[10pt]{article}

\usepackage{latexsym,color,amsmath,amsthm,amssymb,amscd,amsfonts}

\setlength{\textwidth}{5.3in} \setlength{\evensidemargin}{0.7in}
\setlength{\oddsidemargin}{0.7in} \setlength{\textheight}{8.3in}
\setlength{\topmargin}{-0.1in} \setlength{\parskip}{2mm}
\setlength{\baselineskip}{1.7\baselineskip}
\newtheorem{theo}{Theorem}

\newtheorem{cor}[theo]{Corollary}
\newtheorem{prop}[theo]{Proposition}
\newtheorem{defn}[theo]{Definition}
\newtheorem{exam}[theo]{Example}
\def\R{\R}

\def\E{\mathcal{E}}

\def\D{{D}}

\def\R{{\mathbb R}}
\def\grad{\nabla}

\def\qed{\hfill $\vcenter{\hrule height .3mm
\hbox {\vrule width .3mm height 2.1mm \kern 2mm \vrule width .3mm
height 2.1mm} \hrule height .3mm}$ \bigskip}

\def\lam{\lambda}

\def\to{\rightarrow}

\def\pmx{\begin{pmatrix}}
\def\emx{\end{pmatrix}}

\def\det{{\rm det}}

\def\R{\mathbb R}

\def\D{\mathcal D}

\begin{document}

\title{
Pinsker inequalities and related Monge-Amp\`ere  equations for log concave functions 
\footnote{Keywords: K\"ahler-Einstein equation, Monge Amp\`ere equation, Pinsker inequality,  affine isoperimetric inequalities,  Kullback Leibler divergence,  
affine surface area,  $L_p$-affine surface area. 
2010 Mathematics Subject Classification: 46B, 52A20, 60B}}

\author{Umut Caglar, Alexander V. Kolesnikov\thanks{Supported by RFBR project
20-01-00432; The article was prepared within the framework of the HSE University Basic Research Program.} \ and Elisabeth M.  Werner\thanks{Partially supported by  NSF grant DMS-1811146}}
\date{}

\maketitle
\begin{abstract}

In this paper we further develop  the theory of $f$-divergences for log-concave  functions and their related inequalities. 
We establish Pinsker  inequalities and new affine invariant entropy inequalities.   We obtain new inequalities on  functional affine surface area and 
lower and upper bounds for the Kullback-Leibler divergence  in terms of functional affine surface area.  The functional inequalities 
lead to new inequalities for $L_p$-affine surface areas  for convex bodies.
\par
\noindent
Equality characterizations in these inequalities are related to a Monge Amp\`ere differential equation. We establish uniqueness of the solution of the equation.
\end{abstract}

\section{Introduction}

Information theory, probability theory, and  statistics have become important in convex geometry and vice versa,   and 
there are many fascinating connections between these areas. 
Examples are  the relation between the entropy power inequality and the Brunn-Minkowski inequality (see, e.g. \cite{Gardner2002}), the connection, established in \cite{NagySchuettWerner},  between the floating body \cite{BaranyLarman1988, SW1990} from convex geometry and data depth from statistics (see also  \cite{Brunel}, the  relation between the $L_p$-affine surface area and  R\'enyi entropy from information theory and statistics \cite{PaourisWerner2011, Werner2012/1, Werner2012} and connections between convex geometry and quantum information theory (e.g., \cite{Aubrun-SzarekBook, Aubrun-Szarek-Werner2010, Aubrun-Szarek-Werner2011, Aubrun-Szarek-Ye2012, Aubrun-Szarek-Ye2014, Szarek-Werner-Zyczkowski}).
Further examples can be found in the books by Cover, Dembo, and Thomas \cite{DCT} and Villani \cite{Villani} and in \cite{GLYZ, JenkinsonWerner, LutwakYangZhang2002/1, LutwakYangZhang2004/1, LutwakYangZhang2005}.
\par
\noindent
The $L_p$-affine surface area  is a fundamental  notion in the theory of convex bodies. It has many remarkable properties. Aside from being linearly invariant, it is a valuation and it satisfies  an affine isoperimetric inequality: among all convex bodies with fixed volume, $L_p$-affine surface area is maximized (minimized, depending on $p$) by ellipsoids. Not surprisingly, it therefore  finds applications in e.g.,  affine differential geometry \cite{GaZ, LutwakOliker1995, Lutwak1996}, geometric flows \cite {Stancu, Trudinger-Wang, Wang}, valuation theory \cite{Colesanti-Mussnig-Ludwig-1, HabSch2,  Ludwig-Reitzner1999,  Ludwig-Reitzner, Schuster2010, SchusterWannerer} and approximation theory 
\cite{Boeroetzky2000, Boeroetzky2000/2, BoeroetzkyReitzner2004, GTW2018, GroteWerner2018, Reitzner, SchuettWerner2003}. For extensions to the spherical and hyperbolic setting 
see \cite{Besau-Werner2016, Besau-Werner2018}.
\par
\noindent
For   a convex body $K$ in  $\mathbb R^n$ 
and real $p \ne -n$,  it is defined as 
\begin{equation} \label{def:p-affine}
as_{p}(K)=\int_{\partial K}  \frac{ \kappa_K(x)^{\frac{p}{n+p}}   }{ \langle x , N_{K} (x) \rangle^{\frac{n(p-1)}{n+p}} } \
 d\mu_{ K}(x),
\end{equation}
where $\mu_K$ is the usual surface measure on $\partial K$, the boundary of $K$, $ N_{K} (x) $ is the outer unit normal vector at $x$ to $\partial K$ and $\kappa_K(x)$ is the (generalized) Gauss curvature in $x \in \partial K$. The case $p=1$ is the classical affine surface area introduced by 
Blaschke \cite {Blaschke} in dimensions $2$ and $3$ for sufficiently smooth bodies  and extended much later  to all convex bodies by Leichtweiss \cite{Leichtweiss:1986a}, 
Lutwak \cite{Lutwak1991} and Sch\"utt and Werner \cite{SW1990}.
Then, Lutwak, in his ground breaking paper \cite{Lutwak1996}, introduced $L_p$-affine surface area for $p >1$. It was  finally  extended to all $p\in \mathbb{R}$, $p \neq -n$, and all convex bodies  in \cite{SW2004}, (see also \cite{HanSlomkaWerner, MW2}). 
\par
\noindent
In \cite{Werner2012/1} it was shown that $L_p$-affine surface areas are R\'enyi entropies. The latter are specific examples of $f$-divergences.
An $f$-divergence, which  is  an important concept from information theory, is a function that measures the difference between (probability)
distributions  $P$ and $Q$ (see, e.g. \cite{AliSilvery1966, Csiszar, Morimoto1963}). Further examples of  $f$-divergences are  Kullback-Leibler divergence \cite{KullbackLeibler1951} 
$D= D_{KL} (P||Q)$
and total variation distance $V = V(P,Q)$.
For various purposes it is of interest to investigate if they can be compared to one another. The most famous such comparison inequality is Pinsker's inequality \cite{Pinsker1960} which states that 
\begin{equation} \label{PinskerIneq}
 D \geq \frac{1}{2} V^2 .
\end{equation}
The best constant, $\frac{1}{2}$, is due, 
independently to Csisz\'ar \cite{Csiszar1967}, Kemperman \cite{Kemperman1969} and Kullback \cite{Kullback1967, Kullback1970}.
For  applications of Pinsker's inequality, see e.g., \cite{Barron1986}, \cite{Csiszar1984}, \cite{Topsoe1979}, or 
Bolley and Villani \cite{Bolley-Villani}  who showed that  Pinsker's  inequality is a  variant of Talagrand's transportation inequality.
Generalizations of  Pinsker type inequalities for $f$-divergences were obtained by  G. Gilardoni \cite{Gilardoni2010},   M. Reid and R. Williamson \cite{Reid-Williamson2009}. 
\par
\noindent 
In recent years much effort has been devoted to extend concepts from convex geometry to a functional  setting. Natural analogs of  convex bodies are log-concave functions. Much progress has been made in this direction, resulting in
functional analogs of the  Blaschke Santal\'o inequality \cite{ArtKlarMil, KBallthesis,  FradeliziMeyer2007, Lehec2009},  the affine isoperimetric inequality
\cite{ArtKlarSchuWer, CFGLSW}, Alexandrov-Fenchel type inequalities \cite{CaglarWerner2}   and analogs of the John ellipsoid \cite{Alonso-Gutierrez2017} and L\"{o}wner ellipsoid \cite{LSW2019}. More examples can be found in e.g.,  \cite{Alonso-Gutierrez, Alonso-Gutierrez2016, CaglarYe, Colesanti, Colesanti-Fragala, Gardner2002, 
KoldobskyZvavitch, Rotem}).
 In particular, $L_\lambda$-affine surface area, the functional analog of  $L_p$-affine surface area for convex bodies, was introduced in  
 \cite{CFGLSW} for a log concave function  $\varphi(x)= e^{-\psi(x)}$ and $\lambda \in \mathbb{R}$,
\begin{eqnarray*}
as_\lambda(\varphi) &=&
\int_{}  \varphi \   \left(   
\frac{e^{\frac{\langle\grad\varphi, x \rangle}{\varphi}}}{\varphi^2}  \  \mbox{det} \left[   \nabla^2  \left(- \ln \varphi \right)\right] \right)^\lambda dx,
\end{eqnarray*}
where $\grad \varphi$  is the gradient and 
$\nabla^2  \varphi$ is the (generalized) Hessian of $\varphi$.
In that context, entropy inequalities for log-concave functions were established. We only mention a reverse log-Sobolev inequality, proved in \cite{ArtKlarSchuWer, CFGLSW}, 
\begin{eqnarray}\label{Gleichung1Intro}
 \int_{}   \ \ln \bigg(\det \left( \nabla^2  \psi \right)\bigg)  e^{- \psi(x) } dx  &\leq& 2\left[ \operatorname{Ent}(\varphi) - \operatorname{Ent}(g) \right],
\end{eqnarray}
where  $\varphi:\R^{n}\rightarrow [0, \infty)$  is  the density of a  log concave
probability measure on $\mathbb{R}^n$, and $\operatorname{Ent}(\varphi)$, resp. 
$\operatorname{Ent}(g)$ the entropy of $\varphi$ resp. the Gaussian $g$. The  definitions are given below.
\par
\noindent
In \cite{CaglarWerner}, a theory of $f$-divergences  for log-concave functions was initiated. For instance, Kullback-Leibler divergence and related  inequalities for log-concave functions was established as part of the theory.  It already 
yielded entropy inequalities which are stronger than  already existing ones. For example, it resulted in the following  strengthing of  the reverse log-Sobolev  inequality (\ref{Gleichung1Intro}) of \cite{ArtKlarSchuWer}, 
\begin{eqnarray}\label{Gleichung2Intro}
 \int_{}   \ \ln \bigg(\det \left( \nabla^2  \psi \right)\bigg)  e^{- \psi(x) } dx  &\leq& 2\left[ \operatorname{Ent}(\varphi) - \operatorname{Ent}(g) \right] 
+ \ln \left( \frac{\int_{} e^{- \psi^*} }{(2  \pi)^n}\right), 
\end{eqnarray}
where  $\varphi:\R^{n}\rightarrow [0, \infty)$  is  the density of a  log concave
probability measure on $\mathbb{R}^n$,   and $\psi^*$ is the Legendre transform of $\psi$.
\vskip 3mm
\noindent
In this paper we further develop  the theory of $f$-divergences for log-concave  functions and their related inequalities. 
We establish a Pinsker  inequality and new affine invariant entropy inequalities for log-concave functions.   We obtain new inequalities on  functional affine surface area for log-concave functions and  lower and upper bounds for the Kullback-Leibler divergence  in terms of functional affine surface area.  The inequalities obtained for log-concave functions lead to new inequalities for  $L_p$-affine surface areas  for convex bodies. 
\par
\noindent
We start the paper by characterizing the equality case of inequality (\ref{Gleichung2Intro}) in Section \ref{MAGleichung}. While equality characterizations of inequality  (\ref{Gleichung1Intro}) were provided in \cite{CFGLSW},  no such characterizations were available up to date for inequality (\ref{Gleichung2Intro}) and for more general  $f$-divergence inequalities.
We show first that equality characterization is equivalent to uniqueness of the solution of  a Monge Amp\`ere differential equation (also called  elliptic K\"ahler-Einstein equation). Then we show that the Monge-Amp\`ere equation has a unique
solution. To do that we use optimal  transportation and Cafarelli's regularity theory for  optimal  transportation. 
\par
\noindent
In Section \ref{Section-Pinsker}, we show that  a Pinsker type inequality  for log concave functions follows immediately from a result  by G. Gilardoni \cite{Gilardoni2010}.
Namely, for a convex function 
 $f: (0, \infty) \rightarrow \R$,  we have
\begin{eqnarray*}\label{GillardoniLOG}
D_f(\varphi) \ \geq \ \frac{f''(1)}{2}  
\left( \int \left| \frac{ e^{\frac{\langle\grad\varphi, x \rangle}{\varphi}} \mbox{det} \left[ \nabla^2  \left(-\ln \varphi \right)\right] }{\varphi \int e^{- \psi^*} } \ - \ \frac{\varphi}{\int \varphi} \right| dx \right)^2 , 
\end{eqnarray*}
where $D_f(\varphi) $ is the $f$-divergence of the  log concave function $\varphi$ (see Section \ref{SectionLogcon} for the  definition).
A consequence of this Pinsker  inequality is  an improvement of inequality (\ref{Gleichung2Intro}), which takes the form (see Corollary \ref{PinskerIneqFunctional} for the precise statement), 
\begin{eqnarray*}\label{eqn0:Korollar1}  
 \hskip -5mm  \int_{}   \ \ln \bigg(\det \left( \nabla^2  \psi \right)\bigg)  e^{- \psi(x) } dx  &\leq& 2\left[ \operatorname{Ent}(\varphi) - \operatorname{Ent}(g) \right] 
+ \ln \left( \frac{\int_{} e^{- \psi^*} }{(2  \pi)^n}\right)   \nonumber  \\
& - &\frac{1}{2}
\left(
 \int_{}  \left| \frac{ e^{\psi - \langle \grad \psi, x \rangle} \mbox{det} \left (\nabla^2 \psi \right) } { \int_{} e^{-\psi^*} } 
\ - \   e^{-\psi}  \right| dx 
\right)^2.
\end{eqnarray*}
\par
\noindent
In Section \ref{SectionResults1} we prove  difference inequalities for  functional affine surface areas and 
show that $\lambda$-affine surface area of a log concave function $\varphi$ and its polar $\varphi^\circ$ (see (\ref{Polare}) below  for the  definition) is bounded  by a convex combination of $0$-affine surface area and $1$-affine surface area.   We  obtain lower and upper bounds for the Kullback-Leibler divergence $D_{KL} (Q_{\varphi}||P_{\varphi})$ in terms of functional affine surface area,  
$$ as_0 (\varphi) - as_1 (\varphi)  \ \leq D_{KL} (Q_{\varphi}||P_{\varphi}) \leq \ as_{-1} (\varphi) - as_0 (\varphi)
$$
where $P_{\varphi}$ and $Q_{\varphi}$ are two distributions with densities  $ \ p_\varphi= \varphi^{-1}  e^{\frac{\langle\grad\varphi, x \rangle}{\varphi}} \mbox{det} \left[ \nabla^2  \left(-\ln \varphi \right)\right]$ and  $ \ q_\varphi=\varphi $, respectively.  Please see (\ref{KLdiv-Logconcave}) for the exact definition of $ D_{KL} (Q_{\varphi}||P_{\varphi})$. 

\par
\noindent
We show in Section \ref{Applications} that on the level of convex bodies these inequalities  correspond to inequalities on $L_p$-affine surface area,  e.g., the following ones, 
\begin{equation*}
 as_{\infty} (K) -  as_{0} (K)  \  \leq \   as_{\frac{p}{n+p}}(K)   -  as_{\frac{n}{n+p}} (K), 
\end{equation*}
for $ p \in (-\infty , -n)$, and for $ p > -n$, the inequality is reversed. And
\begin{equation*} 
 as_p (K) \ \leq \ \left(\frac{p}{n+p} \right)  as_{\infty} (K) +  \left( \frac{n}{n+p} \right)
 as_0 (K), 
\end{equation*}
in the case when  $ p >0$.  For $ p <0$, those  inequalities are reversed. Equality holds trivially if $p=0$ or $p = \infty$.
Equality also holds for origin symmetric ellipsoids 
$\E$ whose volume $|\E|$ equals the volume $|B^n_2|$ of the Euclidean unit ball $B^n_2$.

\vskip 3mm
\noindent

\section{A Monge-Amp\`ere equation and equality  in a divergence inequality}\label{MAGleichung}

\subsection{Background on $f$-divergence}\label{section-Background on $f$-divergence}
Csisz\'ar \cite{Csiszar}, and independently Morimoto \cite{Morimoto1963} and Ali \& Silvery \cite{AliSilvery1966} introduced 
the notion of $f$-divergence to measure the difference between  probability
distributions.
This notion  finds applications in e.g.  information theory, statistics,  probability theory, signal processing, and pattern recognition \cite{BarronGyorfiMeulen, CoverThomas2006, HarremoesTopsoe, LieseVajda1987,  LieseVajda2006, OsterrVajda}.
\vskip 2mm
\noindent
Let $(X, \mu)$ be a measure space  and let  $P=p \mu$ and  $Q=q \mu$ be  (probability) measures on $X$ that are  absolutely continuous with respect to the measure $\mu$.  
Let $f: (0, \infty) \rightarrow  \mathbb{R}$ be a convex  or a concave  function.
Then the {\em $f$-divergence   $D_f(P,Q)$}  of the measures $P$ and $Q$ is defined by 
\begin{equation}\label{def:fdiv2}
D_f(P,Q)=
 \int_{X} f\left(\frac{p}{q} \right) q d\mu.
\end{equation}
\vskip 2mm
\noindent
The best known examples of $f$-divergences are the {\em total variation distance}
$$ V(P,Q) = \int |p-q| \ d \mu  \ \  \text{for \ } f(t) = |t-1| ,$$
and the {\em Kullback-Leibler divergence}, or {\em relative entropy}
\begin{equation} \label{K-L-divergence}
 D_{KL} (P||Q) = \int p \log \left(\frac{p}{q}\right) \ d \mu \ \ \text{for \ } f(t) = t \log  t .
\end{equation}
We also note that for $f(t) = t^{\alpha}$ we obtain the { \em Hellinger integrals} (see, e.g.,  \cite{LieseVajda2006})
$$
H_{\alpha} (P,Q) = \int_{X} p^{\alpha} q^{1- \alpha} \ d\mu. 
$$
Those are related to the 
{ \em R\'enyi divergence} of order $\alpha$, $\alpha \neq 1$,  introduced by  R\'enyi \cite{Ren} (for $\alpha >0$) as 
\begin{equation}\label{renyi}
D_\alpha(P\|Q)=
\frac{1}{\alpha -1} \log \left( \int_X p^\alpha q^{1-\alpha} d\mu \right)= \frac{1}{\alpha -1} \log \left( H_\alpha (P,Q)\right).
\end{equation}
The case $\alpha =1$ is the relative entropy $ D_{KL}(P\|Q)$.
\vskip 1cm

\subsection{ $f$-divergence for log concave functions}\label{SectionLogcon}

 Let $\psi: \R^n \rightarrow \R \cup \{ \infty \}$
 be a  convex function.  We define $\Omega_\psi$ to be the interior of the convex domain of $\psi$, that is 
\[
\Omega_\psi= \mathrm{int} \, (\{x \in \R^n, \psi(x) < \infty\} )=  \mathrm{int} \, (\{\psi < \infty\} ).
\]
\par
\noindent
We always consider in this paper convex  functions $\psi$ such that $\Omega_\psi \ne \emptyset$. Then $\psi$ is in particular $\psi$ is proper, i.e., $\psi(x)$ is finite for at least one $x$ and that the affine dimension of $\Omega_\psi$ is equal to $n$.  
This implies that 
\begin{equation}\label{positive}
 \int _{\Omega_\psi} e^{-\psi(x)} dx >0. 
\end{equation}
We will also assume throughout  that $e^{-\psi(x)}$ is integrable,  i.e., 
$ \int _{\Omega_\psi} e^{-\psi(x)} dx < \infty$  and we will also often write  in short $ \int _{} e^{-\psi(x)} dx$.
\newline
In the general case, when $\psi$ is neither  smooth nor strictly convex, 
the gradient of $\psi$, denoted  by $\nabla \psi$, exists almost everywhere by Rademacher's theorem (see, e.g., \cite{Rademacher}),   and a theorem of Alexandrov \cite{Alexandroff} and Busemann and Feller \cite{Buse-Feller} guarantees the existence of the (generalized) Hessian, denoted  by
$\nabla^2 \psi$, almost everywhere in $\Omega_\psi$.  Let
 $$X_\psi=\Big\{x\in \R^n:\ \psi(x)<\infty,\ \text{and}\  \nabla^2\psi(x)\ \text{exists\  and\  is\ invertible}\Big\}.$$ 
\par
\noindent
We recall the  Legendre transform $\mathcal{L}\psi$ of $\psi$, 
\begin{equation} \label{Legendre}
\mathcal{L}\psi(y) = \psi^*(y) = \sup_{x \in \R^n} \left[ \langle x,y \rangle  -\psi(x) \right].
\end{equation}
When $\psi $ is $C^2$,  then $X_\psi = \Omega_\psi$ and $X_{\psi^*} = \Omega_{\psi^*}$. 
More  information about duality transforms  of convex functions can be found in \cite{Rockafellar, SchneiderBook}.
\vskip 2mm
\noindent
 A function $\varphi: \mathbb{R}^n \rightarrow [0, \infty)$ is   log concave, if it is of the form $\varphi(x)= \exp(-\psi (x) )$ where  $\psi: \R^n \rightarrow
 \R \cup \{ \infty \}$ is convex. 
The dual function  $\varphi^\circ$  of a log concave function is defined by  \cite{ArtKlarMil, KBallthesis} 
$$ \ \varphi^\circ (x) = \inf_{y \in \R^n } \left[ \frac{e^{- \langle x,y \rangle}}{\varphi(y)}  \right].$$
This definition is connected with the Legendre transform,  
namely, 
\begin{equation}\label{Polare}
 \varphi^\circ  = e^{-\mathcal{L}\left( \psi \right) } = e^{-\psi^*}.
\end{equation}
 \par
 \noindent
 In \cite{CaglarWerner}, 
 $f$-divergences for $s$-concave and log concave functions were introduced and their  basic properties and entropy inequalities  were established.  
It  is explained in detail in \cite{CaglarWerner} that the following definition for $f$-divergence seems to be the correct one.
\begin{defn} \label{defi2}
Let $f: (0, \infty) \rightarrow \mathbb{R}$ be  a  convex or concave function and let $ \varphi:\R^{n}\rightarrow [0, \infty)$ be a log concave function.
Then the $f$-divergence $D_f(\varphi)$ of $\varphi$ is 
\begin{eqnarray}\label{div-Logconcave1}
D_f(\varphi) &=& 
 \int_{X_\psi} \varphi  \ f  \left(
\frac{e^{\frac{\langle\grad\varphi, x \rangle}{\varphi}} }{\varphi ^2} \  \mbox{det} \left[ \nabla^2  \left( -\ln \varphi \right)\right] \right)dx  \nonumber \\
&=&  \int_{X_\psi}  e^{-\psi}  \ f  \left(
e^{2 \psi - \langle\grad\psi, x \rangle}\  \mbox{det} \left[  \nabla^2 \psi \right] \right)dx.
\end{eqnarray}
\end{defn}
\par
\noindent
The special case when $f(t)=t^\lambda$, $- \infty < \lambda <  \infty$ leads to the $L_\lambda$-affine surface areas of $\varphi$ \cite{CFGLSW}, 
\begin{eqnarray}\label{asp-Logconcave}
as_\lambda(\varphi) &=&
\int_{X_\psi} \varphi \   \left(   
\frac{e^{\frac{\langle\grad\varphi, x \rangle}{\varphi}}}{\varphi^2}  \  \mbox{det} \left[  \nabla^2  \left(- \ln \varphi \right)\right] \right)^\lambda dx \nonumber \\
&=& \int_{X_\psi} e^{(2\lam-1)\psi(x)-\lam\langle x, \nabla\psi(x)\rangle}\left(\det \, {\nabla^2 \psi (x)}\right)^\lam dx.
\end{eqnarray}
\noindent
Those were extensively studied in \cite{CFGLSW}. 
In particular,  
\begin{equation}\label{volumephi}
as_0(\varphi)= \int_{X_\psi} \varphi dx
\end{equation}
and, as observed in  \cite{CFGLSW},  since $\mbox{det} \left[ \nabla^2  \left(-\ln \varphi \right)\right] = 0$ outside of $X_\psi$, the integral may be taken over $\Omega_{\psi}$ for any $\lambda >0$.  Therefore
\begin{eqnarray}\label{sublimit}
as_{1}(\varphi)  &=&  \int_{X_{\psi}}  \varphi^{-1}  e^{\frac{\langle\grad\varphi, x \rangle}{\varphi}} \mbox{det} \left[ \nabla^2  \left(-\ln \varphi \right)\right]  dx 
 = \int_{\Omega_{\psi}}  \varphi^{-1}  e^{\frac{\langle\grad\varphi, x \rangle}{\varphi}} \mbox{det} \left[ \nabla^2  \left(-\ln \varphi \right)\right] dx   \nonumber \\
 &=& \int_{X_{\psi^*}} \varphi^\circ.
\end{eqnarray}
In analogy to (\ref{volumeK}) below,  the expressions (\ref{div-Logconcave1})  are the appropriate ones to define  $f$-divergences for log concave  functions
and  because of (\ref{volumephi}) and (\ref{sublimit}) these expressions 
can be viewed as the ``volume" of $\varphi$ and the ``volume" of $\varphi^\circ$ with 
their corresponding ``cone measures".
This is explained in detail in \cite{CaglarWerner}.
\par
\noindent
Another special case  occurs when $f(t)=-\ln t$. The  $f$-divergences then  becomes the Kullback-Leibler divergence 
\begin{eqnarray}\label{KLdiv-Logconcave}
D_{KL}(Q_\varphi || P_\varphi) &=&
 \int_{X_\psi}  \varphi  \ \ln  \left( \varphi ^2 
e^{-\frac{\langle\grad\varphi, x \rangle}{\varphi}} \  \left(\mbox{det} \left[ \nabla^2  \left( -\ln \varphi \right)\right] \right)^{-1} \right) dx \nonumber  \\
&=&  \int_{X_\psi}  e^{-\psi}   ( \langle\grad\psi, x \rangle  -2 \psi ) \    \ln     \left(\mbox{det} \left[ \nabla^2  \left(  \psi \right)\right] \right)^{-1}  dx,
\end{eqnarray}
where $P_{\varphi}$ and $Q_{\varphi}$ are two distributions with densities  $ \ p_\varphi= \varphi^{-1}  e^{\frac{\langle\grad\varphi, x \rangle}{\varphi}} \mbox{det} \left[ \nabla^2  \left(-\ln \varphi \right)\right]$ and  $ \ q_\varphi=\varphi $, respectively.

\subsection{A Monge-Amp\`ere equation}\label{}

Now we concentrate on the following divergence inequality, which was 
also proved in \cite{CaglarWerner}. Recall that we assume throughout the paper that $ \int _{} e^{-\psi(x)} dx < \infty$.
\par
\noindent
\begin{theo} \cite{CaglarWerner} \label{thm00}
 Let $f: (0, \infty) \rightarrow \mathbb{R}$ be a convex function.  Let $ \varphi:\R^{n}\rightarrow [0, \infty)$ be a
integrable  log-concave
function that is $C^2$.
Then
\begin{eqnarray}\label{thm00,1}
D_f(\varphi)
\geq \ f \left(  \frac{\int_{\Omega_{\psi^*}}  \ \varphi^\circ dx}{ \int_{\Omega_{\psi}} \varphi dx}   \right)
 \  \left( \int_{\Omega_{\psi}} \varphi  dx \right).
 \end{eqnarray}
If  $f$ is concave, the inequality is reversed. 
If $f$ is linear, equality holds in (\ref{thm00,1}).  
Equality also holds 
if
 $\varphi(x)=C e^{-\langle A x, x \rangle}$, where
 $C$ is a positive constant  and $A$ is an $n \times n$ positive definite  matrix.
 \end{theo}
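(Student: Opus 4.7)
The plan is to derive (\ref{thm00,1}) by one application of Jensen's inequality, with the remaining identification being formula (\ref{sublimit}) already recorded in the excerpt. Introduce the probability measure
\begin{equation*}
d\mu_\varphi := \frac{\varphi(x)\,dx}{\int_{\Omega_\psi}\varphi\,dx},
\end{equation*}
and write the argument of $f$ inside the definition (\ref{div-Logconcave1}) as the ratio $p_\varphi/q_\varphi$, where $q_\varphi=\varphi$ and $p_\varphi = \varphi^{-1}e^{\langle\nabla\varphi, x\rangle/\varphi}\det[\nabla^2(-\ln\varphi)]$ are the two densities named in the paragraph following (\ref{KLdiv-Logconcave}). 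With this notation, $D_f(\varphi)/\int\varphi\,dx$ is precisely the $\mu_\varphi$-expectation of $f(p_\varphi/q_\varphi)$.

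By convexity of $f$, Jensen's inequality yields
\begin{equation*}
\frac{D_f(\varphi)}{\int\varphi\,dx} \;=\; \int f\!\left(\frac{p_\varphi}{q_\varphi}\right) d\mu_\varphi \;\geq\; f\!\left(\int \frac{p_\varphi}{q_\varphi}\,d\mu_\varphi\right) \;=\; f\!\left(\frac{\int p_\varphi\,dx}{\int\varphi\,dx}\right),
\end{equation*}
and multiplying through by $\int\varphi\,dx$ reduces (\ref{thm00,1}) to the identity $\int p_\varphi\,dx = \int\varphi^\circ\,dx$. This is exactly (\ref{sublimit}), which I would reprove in one line via the change of variables $y=\nabla\psi(x)$ (whose Jacobian on $X_\psi$ is $\det\nabla^2\psi$) combined with the Legendre duality identity $\psi(x)+\psi^*(\nabla\psi(x)) = \langle x,\nabla\psi(x)\rangle$; this rewrites $\varphi^{-1}e^{\langle\nabla\varphi,x\rangle/\varphi} = e^{\psi - \langle\nabla\psi, x\rangle} = e^{-\psi^*(\nabla\psi(x))}$, and under the $C^2$ hypothesis the gradient map $\nabla\psi$ is a diffeomorphism of $X_\psi$ onto a full-measure subset of $\Omega_{\psi^*}$, so the push-forward is $\int e^{-\psi^*(y)}\,dy = \int\varphi^\circ\,dx$.

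The equality discussion falls out cleanly. If $f$ is concave the reverse Jensen inequality gives the reversal of (\ref{thm00,1}); if $f$ is linear, Jensen's inequality is an identity for any measure, so (\ref{thm00,1}) holds with equality automatically. For the Gaussian $\varphi(x)=Ce^{-\langle Ax,x\rangle}$ a direct computation produces $\psi(x)=\langle Ax,x\rangle-\ln C$, $\nabla\psi(x)=2Ax$, $\nabla^2\psi=2A$, and hence $2\psi(x)-\langle\nabla\psi(x),x\rangle = -2\ln C$ with $\det\nabla^2\psi = 2^n\det A$, so the argument of $f$ in (\ref{div-Logconcave1}) collapses to the constant $2^n\det(A)/C^2$ on all of $\R^n$; since Jensen's inequality is an equality whenever the integrand is $\mu_\varphi$-a.e.\ constant, equality in (\ref{thm00,1}) follows. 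The only mild technical subtlety is that $f$ is defined only on $(0,\infty)$, which forces the Jensen step to be carried out on $X_\psi$ rather than all of $\Omega_\psi$; however, under the extension convention of (\ref{sublimit}) the complement $\Omega_\psi\setminus X_\psi$ contributes nothing to either side, so no genuine obstacle arises.
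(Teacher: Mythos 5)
Your proof is correct and follows the same route as the paper: the paper itself notes (in the paragraph after the theorem) that (\ref{thm00,1}) follows from Jensen's inequality applied to the probability measure $\varphi\,dx/\int\varphi\,dx$ together with the change-of-variables identity (\ref{transformation}) (equivalently (\ref{sublimit})), and your equality discussion (linearity, constant integrand for the Gaussian) matches as well. Your concluding caveat about $X_\psi$ versus $\Omega_\psi$ is resolved in the paper by the remark that under the $C^2$ hypothesis $X_\psi=\Omega_\psi$ and $X_{\psi^*}=\Omega_{\psi^*}$.
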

 \par
 \noindent
A characterization of the equality case of inequality (\ref{thm00,1}) - and several other inequalities proved  in  \cite{CaglarWerner} -  has not been obtained so far.  Here we give such a characterization.
We show first that characterization of equality in (\ref{thm00,1}) is equivalent to the unique solution of a Monge Amp\`ere differential equation.
\par
\noindent
We write $\varphi = e^{-\psi}$. It was shown in \cite{CaglarWerner} that 
 inequality \eqref{thm00,1} is a consequence of Jensen's inequality and  the identity (see, e.g., \cite{CaglarWerner}), 
 \begin{equation} \label{transformation}
\int _{\Omega_{\psi^*}}  e^{-\psi^* }dx  \ = \  \int  _{\Omega_\psi}  e^{ \psi - \langle \nabla \psi,x\rangle}   \   \det \left( \nabla^2 \psi \right) dx. 
\end{equation}
Thus,  equality holds in \eqref{thm00,1} if and only if equality holds in
 Jensen's inequality which happens if and only if  
\begin{equation}
\det(\nabla^2\psi(x))=C\, e^{-2\psi(x)+\langle \nabla \psi(x),x\rangle},\qquad  \mbox{a.e.}\;  x\in  {\R}^n .\label{eql21}
\end{equation}
To determine $C$,  we  integrate (\ref{eql21}) to get
$$C \int_{\Omega_{\psi}} e^{-\psi(x)}\, dx = \int_{\Omega_{\psi}}  e^{ \psi - \langle \nabla \psi,x\rangle}   \   \det \left( \nabla^2 \psi \right) dx,$$
which together with (\ref{transformation}) gives that 
$$C=\frac{\int_{\Omega_{\psi^*}} e^{-\psi^* }dx}{\int_{\Omega_{\psi}} e^{-\psi}dx }= \frac{\int_{\Omega_{\psi^*}} \varphi^\circ(x)\, dx}{\int_{X_\psi} \varphi(x)\, dx}. $$
Thus, when  $f$ is either strictly convex or strictly concave,  equality holds in  inequality  \eqref{thm00,1} if and only  if $\psi$ satisfies 
\begin{equation}
\det(\nabla^2\psi(x))=\frac{\int _{\Omega_{\psi^*}}  e^{-\psi^* }dx}{\int_{\Omega_{\psi}}  e^{-\psi}dx }\; e^{-2\psi(x)+\langle \nabla \psi(x),x\rangle},\qquad x\in  {\R}^n.\label{MA001}
\end{equation}
Recall now  that 
\[
\psi ( x) + \psi^* (y ) \geq \langle x,y \rangle
\] 
for every $x,y \in \R^n$, with equality if and only if
 $x$ is in the domain of $\psi$ and $y\in\partial\psi(x)$, the sub differential 
 of $\psi$ at $x$. In particular
\begin{equation}\label{legendreequality}
 \psi^* ( \nabla \psi (x) ) = \langle x ,\nabla \psi (x) \rangle - \psi (x)  ,\quad \rm{a.e.\ in}\ \Omega_\psi. 
\end{equation} 
Thus, equation (\ref{MA001}) can be written as
\begin{equation}\label{MA002}
\frac{  e^{-\psi} }{\int_{\Omega_{\psi} }    e^{-\psi}dx }  = \frac{e^{- \psi^*( \nabla \psi(x))}}{\int _{\Omega_{\psi^*}}  e^{-\psi^* }dx}  \  \det(\nabla^2\psi(x)),
\end{equation}
which is just a Monge Amp\`ere equation  (also called  elliptic K\"ahler-Einstein equation).
\par
\noindent
Note that if $\psi$ solves \eqref{MA002}, then it is not difficult to show that $\psi(x)+c$ solves \eqref{MA002} for any constant $c.$ Thus we  seek uniqueness of the  solution of \eqref{MA002} up to a constant and this is established the following  theorem.
\vskip 2mm 
\noindent
\begin{theo} \label{thm01} 
Let $\psi \colon \mathbb{R}^n \to \mathbb{R} \cup \{+\infty\}$ be a  convex function such that
$$
e^{-\psi} dx  \ {\it and} \ e^{-\psi^*} dx
$$
are finite log concave measures.
Assume that 
 the  mapping $T(x) = \nabla \psi(x)$ pushes forward 
$$
d\mu = \frac{e^{-\psi}}{\int e^{-\psi} dx} dx  \ \
{\it onto}
\ \
d\nu =  \frac{e^{-\psi^*}}{\int e^{-\psi^*} dx} dx.
$$
In addition, assume that $\mu$ has logarithmic derivatives for every $x_i$, $1 \le i \le n$.
Then $\psi$ has the form $\psi = \frac{1}{2} \langle Ax, x \rangle + c$, where $c$ is a constant and $A$
is a $n \times n$ positive definite matrix.
\end{theo}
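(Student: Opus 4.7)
The plan is to combine optimal-transportation theory (Brenier, Caffarelli) with integration-by-parts identities and the functional Blaschke-Santal\'o inequality. Since $\psi$ is convex and $T = \nabla\psi$ pushes $\mu$ onto $\nu$, Brenier's theorem identifies $T$ as the unique quadratic-cost optimal transport map from $\mu$ to $\nu$. The Monge-Amp\`ere change-of-variables formula for this push-forward, together with the Legendre identity $\psi^*(\nabla\psi(x)) = \langle x,\nabla\psi(x)\rangle - \psi(x)$, recovers precisely equation (\ref{MA002}), equivalently
$$
\log\det\nabla^2\psi(x) \ = \ -2\psi(x) + \langle x,\nabla\psi(x)\rangle + C, \qquad C = \log\Bigl({\textstyle\int e^{-\psi^*}dx\big/\int e^{-\psi}dx}\Bigr).
$$

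To manipulate this pointwise equation and to integrate by parts legitimately, I first upgrade regularity. Both $\mu$ and $\nu$ are log-concave probability measures with strictly positive densities on the convex open supports $\Omega_\psi$ and $\Omega_{\psi^*}$, so Caffarelli's $C^{2,\alpha}$ regularity theorem for optimal transport (with the extensions by Figalli, De Philippis, Kolesnikov and others to the log-concave, possibly unbounded-support setting) yields $\psi\in C^{2,\alpha}_{\rm loc}(\Omega_\psi)$ with strictly positive Hessian and $\nabla\psi\colon\Omega_\psi\to\Omega_{\psi^*}$ a diffeomorphism. The hypothesis that $\mu$ admits logarithmic derivatives in every coordinate is precisely what allows integration by parts against $e^{-\psi}\,dx$ without boundary contributions. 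Integrating the log-form of the equation against $d\mu$ and using the virial identity $\int\langle x,\nabla\psi\rangle\,e^{-\psi}\,dx = n\int e^{-\psi}\,dx$ (itself obtained by IBP) yields the entropy identity
$$
H(\mu) + H(\nu) \ = \ n + \log\Bigl({\textstyle \int e^{-\psi}\,dx\cdot\int e^{-\psi^*}\,dx}\Bigr),
$$
where $H(\cdot)$ denotes differential entropy. A second integration by parts gives $\int\nabla\psi\,e^{-\psi}\,dx = 0$, which after the change of variables $y=\nabla\psi(x)$ shows that $\nu$ has barycenter at the origin; the functional Blaschke-Santal\'o inequality applied to $\varphi^\circ = e^{-\psi^*}$ then gives $\int e^{-\psi}\cdot\int e^{-\psi^*} \le (2\pi)^n$ with equality if and only if $\psi$ is (shifted) quadratic.

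The hard part will be promoting this inequality to an equality. I plan to achieve this by coupling it with two further ingredients: (i) the IBP identity $\mathbb{E}_\mu[(x-\bar{x})\otimes\nabla\psi] = I$ (since $\int x_i \psi_j e^{-\psi}\,dx = \delta_{ij}\int e^{-\psi}\,dx$), which via matrix Cauchy-Schwarz gives $\det\operatorname{Cov}(\mu)\,\det\operatorname{Cov}(\nu)\ge 1$ with equality if and only if $\nabla\psi$ is an affine function of $x$; and (ii) the maximum-entropy Gaussian bounds on $H(\mu)$ and $H(\nu)$ in terms of their covariances. Combined with the entropy identity above, these force the chain of inequalities to be simultaneously tight, so matrix Cauchy-Schwarz is an equality and $\nabla^2\psi$ is constant. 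The centering $\int\nabla\psi\,e^{-\psi}\,dx = 0$ then rules out a linear term, yielding $\psi(x) = \tfrac12\langle Ax,x\rangle + c$ as claimed. The main obstacle is precisely this chaining step: verifying that the three ingredients really do pin matrix Cauchy-Schwarz at its equality case requires careful bookkeeping of all the constants, covariances, and their compatibility with the Brenier push-forward relation.
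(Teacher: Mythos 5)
Your plan takes a genuinely different route from the paper, but it has a real gap exactly at the step you yourself flag as ``the main obstacle,'' and that gap does not appear to be closable by careful bookkeeping alone.

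The paper's proof is a direct Bochner-type argument. It differentiates the Monge--Amp\`ere equation $V=W(\nabla\psi)-\log\det\nabla^2\psi$ twice along a coordinate direction, integrates against $\mu$, and then uses the self-transport structure $\nabla^2\psi^*(\nabla\psi)=(\nabla^2\psi)^{-1}$ so that the term $\int \langle\nabla^2 W(\nabla\psi)\nabla\psi_{x_i},\nabla\psi_{x_i}\rangle\,d\mu$ \emph{equals} $\int V_{x_ix_i}\,d\mu$, forcing
$\int \mathrm{Tr}\bigl[(\nabla^2\psi)^{-1}\nabla^2\psi_{x_i}\bigr]^2 d\mu=0$ and hence $\nabla^2\psi_{x_i}\equiv0$.
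Almost all the technical work (Propositions \ref{KKpropos}, \ref{sobolev-reg} and \ref{l2ineq}) is in justifying this integration by parts and the needed Sobolev regularity. The result then falls out as an exact identity; no inequality ever has to be pinned at equality.

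Your plan's preliminary steps are sound: Caffarelli regularity, the Legendre identity, the virial identity giving $H(\mu)+H(\nu)=n+\log\bigl(\int e^{-\psi}\int e^{-\psi^*}\bigr)$, the IBP showing $\nu$ has barycenter $0$, hence functional Blaschke--Santal\'o, and the matrix Cauchy--Schwarz consequence $\det\mathrm{Cov}(\mu)\det\mathrm{Cov}(\nu)\ge1$ of $\mathbb{E}_\mu[(x-\bar x)\otimes\nabla\psi]=I$. But these do not chain to an equality. Feeding the Gaussian max-entropy bounds into the entropy identity gives
\begin{equation*}
\frac{\int e^{-\psi}\,dx\cdot\int e^{-\psi^*}\,dx}{(2\pi)^n}\ \le\ \sqrt{\det\mathrm{Cov}(\mu)\,\det\mathrm{Cov}(\nu)}.
\end{equation*}
Blaschke--Santal\'o bounds the left side by $1$, and matrix Cauchy--Schwarz bounds the right side below by $1$. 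So the known facts say $\mathrm{LHS}\le 1\le \mathrm{RHS}$ and, separately, $\mathrm{LHS}\le\mathrm{RHS}$. This is internally consistent and does not force any of the four inequalities to be tight; both sides can be strict simultaneously. To close the chain you would need an inequality in the opposite direction, something like $\int e^{-\psi}\int e^{-\psi^*}\ge(2\pi)^n\sqrt{\det\mathrm{Cov}(\mu)\det\mathrm{Cov}(\nu)}$, i.e.\ a covariance-weighted reverse Santal\'o inequality. No such inequality is in play, and it is far from clear one is true. This is why the paper works with the twice-differentiated Monge--Amp\`ere equation rather than with entropy: the second-derivative identity produces a manifestly nonnegative term that must vanish, rather than a one-sided bound that merely must be compatible with other one-sided bounds.
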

\vskip 2mm
\noindent
Before we prove this theorem we need to establish some preliminary results concerning
integral relations for solutions to the optimal transportation problem. For more detail and background we refer to \cite{Villani}.
\newline
Let $\mu= e^{-V} dx$ be a probability measure on $\mathbb{R}^n$. 
We say that $V_{x_i}$ is a logarithmic (Sobolev) derivative of $\mu$ if $V_{x_i} \in L^{1}(\mu)$
and  for every compactly supported smooth test function $\xi$ the following relation holds, 
$$
\int \xi_{x_i} d \mu = \int \xi V_{x_i} d \mu.
$$
\par
\noindent
As noted above, in the case of a log concave measure $\mu = e^{-V}$ the function $V$ is almost everywhere differentiable on
$\Omega_V= \mathrm{int} \, (\{ V < \infty\} )$, but this does not mean that  $\mu$ has logarithmic derivatives. Indeed,
in general the integration by parts formula includes a  singular term,
$$
\int \xi_{x_i} d \mu = \int \xi V_{x_i} d \mu + \int_{\partial \{V < \infty\}} \langle n, e_i\rangle \xi e^{-V} 
d \mathcal{H}^{n-1},
$$ 
where $n$ is the outward normal vector to $\partial \{ V < \infty\}$ and $\mathcal{H}^{n-1}$ is the $(n-1)$-dimensional Hausdorff measure.
Thus $\mu$ does not admit a logarithmic derivative if $\{ V = \infty\}$ is not empty and
$e^{-V}$ is not vanishing on $\partial \{ V < \infty\}$.
\vskip 2mm
\noindent
In what follows,  we are given two probability measures $d\mu= e^{-V} dx$ and  $d\nu= e^{-W} dx$. Let $\nabla \psi$ be the optimal transportation of  $\mu= e^{-V} $ onto  $\nu= e^{-W}$.
We remark that $\psi_{x_i}$ is always understood in the classical sense, i.e. almost everywhere pointwise. The next proposition was proved in  \cite{KK}.
\par
\noindent
\begin{prop} ({\bf Proposition 5.5.} \cite{KK}) \label{KKpropos}
Assume that $V, W, \psi$ are smooth functions on the entire $\mathbb{R}^n$ and $\nu$ is a log concave measure. Then  for every $q \geq 2$, $0 <\tau <1$, $i=1, \cdots, n$ there exists $C(q, \tau) >0$ such that 
$$
\int_{\mathbb{R}^n}  |\psi_{x_i x_i}|^{q} d \mu \leq C(q,\tau) \left( \int_{\mathbb{R}^n} |V_{x_i}|^{\frac{2q}{2-\tau}} d \mu + \int_{\mathbb{R}^n} |x_i|^{\frac{2q}{\tau}} d \nu \right).
$$
\end{prop}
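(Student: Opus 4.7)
The plan is to combine integration by parts (which is boundary-free thanks to the logarithmic-derivative assumption on $\mu$) with the Monge--Amp\`ere equation satisfied by $\psi$, namely $\log\det\nabla^2\psi=-V+W(\nabla\psi)$, and then close the resulting identity via H\"older's and Young's inequalities. Writing $T=\nabla\psi$ so that $\psi_{x_ix_i}=\partial_{x_i}T_i\geq 0$, I would first observe that pushforward under the Brenier map gives $\int F(T)\,d\mu=\int F\,d\nu$, which in particular turns moments of $T_i$ into moments of the coordinate $x_i$ under $\nu$.

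The first main step is to integrate by parts in $x_i$ in the identity $\int\psi_{x_ix_i}^q\,d\mu=\int(\partial_{x_i}T_i)\,\psi_{x_ix_i}^{q-1}\,d\mu$, moving the derivative off $T_i$. Using the logarithmic derivative $V_{x_i}$ of $\mu$ this produces
\[
\int\psi_{x_ix_i}^q\,d\mu \;=\; \int T_iV_{x_i}\,\psi_{x_ix_i}^{q-1}\,d\mu\;-\;(q-1)\int T_i\,\psi_{x_ix_i}^{q-2}\,\psi_{x_ix_ix_i}\,d\mu.
\]
The second main step, which is the heart of the argument, is to eliminate the troublesome third-derivative term. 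Differentiating the Monge--Amp\`ere equation once in $x_i$ yields
\[
\sum_{j,k}(\nabla^2\psi)^{jk}\psi_{x_ix_jx_k} \;=\; -V_{x_i}+\langle\nabla W(T),\nabla\psi_{x_i}\rangle,
\]
and after multiplying by $T_i\psi_{x_ix_i}^{q-2}$, integrating, and carrying out one further integration by parts to redistribute the covariant derivatives, the hypothesis $\nabla^2W\succeq 0$ (i.e.\ log-concavity of $\nu$) supplies a nonnegative quadratic form that allows the $\psi_{x_ix_ix_i}$-contribution to be absorbed into the left-hand side. The outcome is a clean bound of the form
\[
\int\psi_{x_ix_i}^q\,d\mu \;\leq\; q\int|T_iV_{x_i}|\,\psi_{x_ix_i}^{q-1}\,d\mu.
\]

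The third step is routine: H\"older with conjugate exponents $q$ and $q/(q-1)$ gives
\[
\int|T_iV_{x_i}|\,\psi_{x_ix_i}^{q-1}\,d\mu \;\leq\; \Bigl(\int|T_iV_{x_i}|^q\,d\mu\Bigr)^{1/q}\Bigl(\int\psi_{x_ix_i}^q\,d\mu\Bigr)^{(q-1)/q},
\]
and dividing by the last factor raised to the $(q-1)$-power yields $\int\psi_{x_ix_i}^q\,d\mu\leq q^q\int|T_i|^q|V_{x_i}|^q\,d\mu$. A second application of H\"older with conjugate exponents $\frac{2}{2-\tau}$ and $\frac{2}{\tau}$ separates the two factors, and Young's inequality $ab\leq \frac{a^p}{p}+\frac{b^{p'}}{p'}$ converts the resulting product of $L^p$ norms into the sum
\[
C(q,\tau)\Bigl(\int|V_{x_i}|^{2q/(2-\tau)}\,d\mu+\int|T_i|^{2q/\tau}\,d\mu\Bigr).
\]
Finally, the pushforward identity $\int|T_i|^{2q/\tau}\,d\mu=\int|x_i|^{2q/\tau}\,d\nu$ delivers the claimed bound.

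The main obstacle is the second step: without the MA equation one is stuck in a tautology (integrating by parts in only one direction just cycles between equivalent expressions), and without the log-concavity of $\nu$ the third-derivative term has no definite sign. The technical content is precisely showing that the contraction of the third-order terms against $\nabla^2W$ assembles into a nonnegative form, which is what permits the absorption step.
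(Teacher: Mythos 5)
The paper itself does not prove this statement: it is quoted verbatim from \cite{KK} (Proposition 5.5 there), so your argument can only be compared with the original source. Your Steps 1 and 3 are correct and match the shape of the claimed estimate: the exponents $\tfrac{2q}{2-\tau}$ and $\tfrac{2q}{\tau}$ are exactly what H\"older with weights $\tfrac{2-\tau}{2}+\tfrac{\tau}{2}=1$, Young's inequality, and the push-forward identity $\int |T_i|^{r}\,d\mu=\int |x_i|^{r}\,d\nu$ produce from a bound of the form $\int\psi_{x_ix_i}^{q}\,d\mu\le C\int |T_i|^{q}|V_{x_i}|^{q}\,d\mu$. The problem is Step 2, which you rightly call the heart of the argument but which is asserted rather than proved, and which cannot work as described. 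Multiplying the once-differentiated Monge--Amp\`ere equation $V_{x_i}=-L\psi_{x_i}$ by any multiplier $\eta$ and integrating by parts via the invariance $\int L\xi\cdot\eta\,d\mu=-\int\langle(\nabla^2\psi)^{-1}\nabla\xi,\nabla\eta\rangle\,d\mu$ gives $\int V_{x_i}\eta\,d\mu=\int\langle(\nabla^2\psi)^{-1}\nabla\psi_{x_i},\nabla\eta\rangle\,d\mu=\int\langle e_i,\nabla\eta\rangle\,d\mu=\int \eta_{x_i}\,d\mu$, because $\nabla\psi_{x_i}=\nabla^2\psi\,e_i$ cancels the inverse Hessian; this is literally the definition of the logarithmic derivative of $\mu$, i.e.\ the tautology you were trying to escape. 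In particular the terms involving $\nabla W(\nabla\psi)$ cancel before $\nabla^2W$ ever appears, so the log-concavity of $\nu$ cannot enter at this order. It enters only after differentiating the Monge--Amp\`ere equation a \emph{second} time, and that produces $V_{ee}$ on the left-hand side --- a quantity not controlled by the hypotheses of this proposition (that is precisely the content of Proposition \ref{l2ineq} of this paper, which is a different statement with stronger assumptions).

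The gap is not cosmetic. The intermediate claim $\int\psi_{x_ix_i}^{q}\,d\mu\le q\int |T_iV_{x_i}|\,\psi_{x_ix_i}^{q-1}\,d\mu$ fails for a general convex potential, so no amount of integration by parts and sign bookkeeping alone --- which is all that Step 2 actually delivers --- can establish it. For instance, in dimension one with $\mu$ the standard Gaussian and $u'(t)=\varepsilon\tanh(Mt)$ one has $\int (u'')^{2}\,d\mu\sim \varepsilon^{2}M\to\infty$ as $M\to\infty$ while $\int (u')^{2}(v')^{2}\,d\mu$ stays bounded; the corresponding target measure is of course not log-concave with a globally smooth potential, so this does not contradict the proposition, but it shows that the structure of the target must be used in an essential, quantitative way to control the mixed third-derivative term $\int T_i\psi_{x_ix_i}^{q-2}\psi_{x_ix_ix_i}\,d\mu$. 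That mechanism is the actual content of the proof in \cite{KK}, and it is exactly what is missing from your write-up; until it is supplied, Steps 1 and 3 have nothing to feed on.
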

\par
\noindent
In the case when $\mu$ admits logarithmic derivatives which are integrable in a sufficiently high power,  it is natural to understand the second derivatives of $\psi$ in the Sobolev sense. More precisely, we say that $\psi$ admits second partial derivatives $\psi_{x_i x_j}$  in the Sobolev sense
if for every smooth compactly supported test function $\xi$, 
$$
\int \psi_{x_i x_j} \xi d \mu = - \int \xi_{x_i} \psi_{x_j} d \mu + \int V_{x_i} \psi_{x_j} \xi d \mu.
$$
We will use throughout  differentiation of the Monge-Amp\`ere equation developed in  \cite{KK} and \cite{Kol}.
Let us briefly explain this machinery. 
Differentiate the change of variables formula 
$$
V = W(\nabla \psi) - \log \det \nabla^2 \psi
$$ 
 along unit vector $e$, 
\begin{equation}
\label{Ve}
V_e = \langle \nabla^2 \psi_e, \nabla W(\nabla \psi) \rangle - {\rm Tr} (\nabla^2 \psi)^{-1} \nabla^2 \psi_e. 
\end{equation}
Introduce  the second-order  differential operator $L$, 
$$
L f =   {\rm Tr} (\nabla^2 \psi)^{-1} \nabla^2 f - \langle \nabla f, \nabla W(\nabla \psi)\rangle.
$$
One can verify (see \cite{Kol}, Lemma 2.1) that measure $\mu$ is invariant with respect to $L$
$$
\int L \xi \cdot \eta d \mu= - \int \langle (\nabla^2 \psi)^{-1} \nabla \xi, \nabla \eta \rangle d \mu, 
$$
where $\xi$ and $\eta$ are  smooth functions with compact supports 
$$K_{\xi}, K_{\eta} \subset \{ V < \infty\}.$$
Equation (\ref{Ve}) can be rewritten as follows:
$$
V_e = - L \psi_e.
$$
Differentiating second time one gets
\begin{equation}
\label{vee=-L}
V_{ee} = - L \psi_{ee} + \langle \nabla^2 W(\nabla \psi), \nabla \psi_e, \nabla \psi_e \rangle + {\rm Tr}\bigl[ (\nabla^2 \psi)^{-1}(\nabla^2 \psi_e)\bigr]^2 .
\end{equation}
Integrating with respect to $\mu$ and using invariance of $\mu$  one gets formally
the following integral identity obtained in \cite{Kol2010}
\begin{equation}
\label{integral2}
\int V^2_e d \mu = \int V_{ee} d \mu =  \int \langle \nabla^2 W(\nabla \psi) \nabla \psi_e, \nabla \psi_e \rangle d \mu + \int {\rm Tr}\bigl[ (\nabla^2 \psi)^{-1}(\nabla^2 \psi_e)\bigr]^2 d \mu.
\end{equation}
Note that $\int V^2_e d \mu = \int V_{ee} d \mu $ as $\mu$ admits logarithmic derivatives.
\par
\noindent
We stress that it is indeed a formal relation, because we neglect boundary terms which may appear. Formula (\ref{integral2}) holds under additional assumptions on the growth and smoothness of $V,W$.
\par
\noindent
Now  we will apply the following slight extension of Proposition 5.5. of  \cite{KK} (see Remark 5.6 in \cite{KK}), which can be easily obtained by smooth approximations.
\begin{prop} { \cite{KK}}
\label{sobolev-reg}
Assume that $\nu$ is a log concave measure and that  $\mu$ admits logarithmic derivatives $V_{x_i}$ for all $1 \le i \le n$ which are integrable in any power.
Then $\psi$ admits second Sobolev derivatives with respect to $\mu$  satisfying
\begin{equation}
\label{d2p}
\int |\psi_{x_i x_i}|^{q} d \mu \leq C(q,\tau) \left( \int |V_{x_i}|^{\frac{2q}{2-\tau}} d \mu + \int |x_i|^{\frac{2q}{\tau}} d \nu \right),
\end{equation}
where $q \ge 2,  0 < \tau <1$.
\end{prop}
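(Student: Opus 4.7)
The plan is to reduce the statement to Proposition \ref{KKpropos} by smooth approximation, carried out in three stages: construct regularized data $(V_k, W_k)$ that are smooth on all of $\mathbb{R}^n$ and converge to $(V,W)$; apply the smooth bound to the corresponding Brenier potentials $\psi_k$; and pass to the limit.

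For the approximation, I would take $V_k = V \ast \rho_k + \frac{1}{k}|x|^2$ and $W_k = W \ast \rho_k + \frac{1}{k}|x|^2$, with $\rho_k$ a standard mollifier, followed by renormalization so that $e^{-V_k}\, dx$ and $e^{-W_k}\, dx$ are probability measures $\mu_k,\nu_k$. The quadratic penalty makes both $V_k$ and $W_k$ strongly convex, so in particular $\nu_k$ is log concave and has finite moments of every order, while mollification makes them smooth on all of $\mathbb{R}^n$. The hypothesis that $V_{x_i} \in L^p(\mu)$ for every $p$ yields, via properties of convolution against a log concave density, $(V_k)_{x_i} \to V_{x_i}$ in $L^p(\mu_k)$ and $\mu_k \to \mu$ weakly, with uniform control of the first moments. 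Since $\nu$ is log concave, Borell's inequality provides uniform bounds on $\int |x_i|^r \, d\nu_k$ for every $r$.

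Let $\nabla \psi_k$ be the Brenier map transporting $\mu_k$ onto $\nu_k$; by Caffarelli's regularity theory, $\psi_k$ is smooth on $\mathbb{R}^n$, so Proposition \ref{KKpropos} applies and gives
\begin{equation*}
\int |\partial^2_{x_i x_i} \psi_k|^q \, d\mu_k \le C(q,\tau) \left( \int |(V_k)_{x_i}|^{\frac{2q}{2-\tau}} d\mu_k + \int |x_i|^{\frac{2q}{\tau}} d\nu_k \right),
\end{equation*}
and the right-hand side is bounded uniformly in $k$ thanks to the convergence above. Stability of optimal transport (see Villani \cite{Villani}) together with strict convexity of the limit problem yields $\nabla \psi_k \to \nabla \psi$ $\mu$-a.e.\ after normalizing $\psi_k(0)=0$. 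The uniform $L^q(\mu_k)$ estimate then gives, along a subsequence, a weak limit $h_i \in L^q(\mu)$ of $\partial^2_{x_i x_i} \psi_k$, and lower semicontinuity of the $L^q$ norm delivers the estimate \eqref{d2p} once we identify $h_i = \partial^2_{x_i x_i}\psi$.

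The identification is where the assumption on logarithmic derivatives is actually used: testing against a smooth compactly supported $\xi$, integrating by parts in the smooth approximation,
\begin{equation*}
\int \partial^2_{x_i x_i} \psi_k \cdot \xi \, d\mu_k = -\int \partial_{x_i}\xi \cdot \partial_{x_i}\psi_k \, d\mu_k + \int (V_k)_{x_i} \, \partial_{x_i}\psi_k \cdot \xi \, d\mu_k,
\end{equation*}
and letting $k \to \infty$ (using a.e.\ convergence of $\partial_{x_i}\psi_k$ together with $L^p$-equi-integrability coming from Proposition \ref{KKpropos} itself at a slightly lower exponent), one obtains exactly the Sobolev-derivative relation displayed just before the proposition, hence $h_i = \psi_{x_i x_i}$ in the Sobolev sense with respect to $\mu$. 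The main obstacle is this limit passage: one must reconcile weak convergence of second derivatives against the varying reference measure $\mu_k$ and justify the integration by parts without boundary terms. The hypothesis that $\mu$ admits logarithmic derivatives in every power is precisely what rules out the singular boundary contribution and secures equi-integrability; once this is in place the rest of the argument is a standard lower semicontinuity.
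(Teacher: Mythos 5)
The paper does not actually prove Proposition \ref{sobolev-reg}: it is quoted from \cite{KK} (Proposition 5.5 and Remark 5.6) with the one-line remark that it ``can be easily obtained by smooth approximations'' from Proposition \ref{KKpropos}. Your proposal fills in exactly that intended route --- regularize, apply the smooth estimate, pass to the limit, and use the logarithmic-derivative hypothesis to identify the weak limit of $\partial^2_{x_ix_i}\psi_k$ with the Sobolev second derivative --- so the overall strategy is the right one.

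There is, however, one step that fails as written: the regularization $V_k=V\ast\rho_k+\tfrac1k|x|^2$ (and likewise for $W$). In this setting $V$ and $W$ are allowed to take the value $+\infty$ outside their domains (this is precisely the situation in which the singular boundary term in the integration-by-parts formula is an issue), and if $V=+\infty$ on a set of positive measure then $V\ast\rho_k$ is itself $+\infty$ on an open set. Hence $V_k$ is neither finite nor smooth on all of $\mathbb{R}^n$, and Proposition \ref{KKpropos}, which requires $V,W,\psi$ smooth on the entire $\mathbb{R}^n$, cannot be applied to the approximating problem. The standard repair is to mollify the \emph{density} rather than the potential: set $e^{-V_k}=e^{-V}\ast\gamma_{1/k}$ and $e^{-W_k}=e^{-W}\ast\gamma_{1/k}$ (suitably normalized), which is exactly what the paper does for $W$ in Step 2 of the proof of Proposition \ref{l2ineq}. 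Then $V_k,W_k$ are finite and smooth everywhere, the Pr\'ekopa--Leindler inequality keeps $\nu_k$ log concave, and the hypothesis that $\mu$ admits logarithmic derivatives (i.e.\ that $\nabla(e^{-V})=-\nabla V\,e^{-V}$ with no singular boundary part) gives
\begin{equation*}
\nabla V_k=\frac{(\nabla V\,e^{-V})\ast\gamma_{1/k}}{e^{-V}\ast\gamma_{1/k}}\longrightarrow \nabla V
\end{equation*}
with the $L^p$ control you need on the right-hand side of the estimate. With this substitution the rest of your argument --- uniform moment bounds for $\nu_k$, stability of the Brenier maps, weak compactness of $\partial^2_{x_ix_i}\psi_k$ in $L^q$, and identification of the limit via the integration-by-parts formula defining Sobolev derivatives with respect to $\mu$ --- goes through as you describe, modulo the routine bookkeeping needed because the reference measures $\mu_k$ vary with $k$.
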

\par
\noindent


Inequality (\ref{integral2}) follows from the integration by parts formula applied to an identity obtained
by differentiation of the Monge--Amp\'ere equation.
Our next aim is to justify (\ref{integral2}) in form of inequality in a sufficient general setting.
\par
\noindent
\begin{prop}
\label{l2ineq}
Let $\mu = e^{-V} dx$   and $\nu = e^{-W} dx$ be  probability measures, $V \colon \mathbb{R}^n \to \mathbb{R} \cup 
\{+ \infty\}$, $W \colon \mathbb{R}^n \to \mathbb{R} \cup \{+ \infty\}$.
Let $\nabla \psi$ be the optimal transportation mapping of $\mu$ onto $\nu$. 
\par
\noindent
Assume that the following assumptions hold.
\begin{enumerate}
\item
The sets $\{ V < \infty\}$ and  $\{W < \infty\}$ are open and
$V$ and $W$ are twice   differentiable  on 
 the sets $\{V<\infty\}$ and $\{W < \infty\}$ 
respectively, 
with locally H{\"o}lder second derivatives.
\item
The measure $\mu$ admits  logarithmic derivatives $V_{x_i}$  which are integrable in every power with respect to $\mu$
$$
\int |V_{x_i}|^p d\mu < \infty, \ \forall p >0.
$$
\item
$\nu$ is log-concave.
\end{enumerate}
Then $\psi$ is at least four times continuously differentiable 
on  $\{V<\infty\}$ 
and the following integral inequality holds for every unit vector $e\in \mathbb{R}^n$,
\begin{equation}
\label{l2-ineq}
\int_{\{V < \infty\}} V_{ee} d \mu \ge  \int \langle \nabla^2 W(\nabla \psi) \nabla \psi_e, \nabla \psi_e \rangle d \mu + \int {\rm Tr}\bigl[ (\nabla^2 \psi)^{-1}(\nabla^2 \psi_e)\bigr]^2 d \mu.
\end{equation}
\end{prop}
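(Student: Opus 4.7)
The plan is to upgrade the formal derivation sketched above the proposition into a rigorous inequality. Three ingredients are needed: Caffarelli's interior regularity theory, which gives enough smoothness of $\psi$ on $\{V<\infty\}$ to differentiate pointwise; the Sobolev bound of Proposition \ref{sobolev-reg}, which controls the right-hand side of (\ref{l2-ineq}); and a cutoff approximation inside $\{V<\infty\}$ that lets us discard a term of definite sign, producing an inequality rather than the (generally false, for nonsmooth domains) formal equality (\ref{integral2}).

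First I would establish the claimed $C^4$-regularity. Since $\nu$ is log-concave, $\{W<\infty\}$ is open and convex. On any compact $K\Subset\{V<\infty\}$ both densities $e^{-V}$ and $e^{-W}$ (the latter on $\nabla\psi(K)\subset\{W<\infty\}$) are bounded above and below, so Caffarelli's interior regularity theory applied to the Monge-Amp\`ere equation $V=W(\nabla\psi)-\log\det\nabla^2\psi$ yields $\psi\in C^{2,\alpha}_{\mathrm{loc}}(\{V<\infty\})$; bootstrapping using the assumed local $C^{2,\alpha}$ regularity of $V,W$ lifts this to $C^{4,\alpha}_{\mathrm{loc}}$. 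Differentiating the equation twice along a unit vector $e$ yields pointwise on $\{V<\infty\}$
\begin{equation*}
V_{ee}=-L\psi_{ee}+\langle\nabla^2 W(\nabla\psi)\nabla\psi_e,\nabla\psi_e\rangle+\mathrm{Tr}\bigl[(\nabla^2\psi)^{-1}\nabla^2\psi_e\bigr]^2.
\end{equation*}
Hypothesis (2) gives $\int V_{ee}\,d\mu=\int V_e^2\,d\mu<\infty$ by integration by parts without boundary term, while Proposition \ref{sobolev-reg} controls the Hessian terms $\psi_{x_ix_i}\in L^q(\mu)$ for every $q\ge 2$; log-concavity of $\nu$ plus $\nabla\psi_{\#}\mu=\nu$ handles the $\nabla^2 W$ term.

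The decisive step is to prove $-\int L\psi_{ee}\,d\mu\ge 0$. Formally the invariance of $\mu$ under $L$ would give zero, but $\psi_{ee}$ is not compactly supported in $\{V<\infty\}$. I would choose a smooth exhausting family of cutoffs $\chi_R$ with $\mathrm{supp}\,\chi_R\Subset\{V<\infty\}$, apply Leibniz
\begin{equation*}
L(\chi_R\psi_{ee})=\chi_R L\psi_{ee}+\psi_{ee}L\chi_R+2\langle(\nabla^2\psi)^{-1}\nabla\chi_R,\nabla\psi_{ee}\rangle,
\end{equation*}
integrate (where invariance is now legitimate since $\chi_R\psi_{ee}$ is compactly supported), and estimate the remainder by Cauchy-Schwarz against the $L^q(\mu)$-norms provided by Proposition \ref{sobolev-reg}. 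The crucial point is that, because $\{V<\infty\}$ is convex (a sublevel set of the convex function $V$) and $(\nabla^2\psi)^{-1}$ is positive definite, the family $\chi_R$ can be chosen so that the limiting remainder is nonpositive rather than merely bounded. Passing $R\to\infty$ and combining with the pointwise identity produces (\ref{l2-ineq}).

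The main obstacle is precisely this last step, the sign control of the cutoff remainder at $\partial\{V<\infty\}$. In the globally smooth situation of Proposition \ref{KKpropos} one recovers the equality (\ref{integral2}); the weakening here to an inequality reflects an essentially unavoidable boundary contribution which, however, carries the correct sign thanks to the convex geometry of $\{V<\infty\}$ and the positivity of $(\nabla^2\psi)^{-1}$. Organizing the cutoff so that the integrability provided by Proposition \ref{sobolev-reg} combines with the sign structure dictated by convexity is the technical heart of the proof.
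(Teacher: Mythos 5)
Your regularity argument (Caffarelli interior estimates plus bootstrapping to get $\psi\in C^{4,\alpha}_{\mathrm{loc}}(\{V<\infty\})$) matches the paper, and so does the starting pointwise identity. But from there the route you take is different and has a genuine gap.

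First, your cutoff is a function $\chi_R(x)$ of the physical variable. The paper instead uses the cutoff $\xi=\eta(\nabla\psi)$ composed with the transport map. This choice is not a stylistic one: it makes $\nabla\xi=\nabla^2\psi\cdot(\nabla\eta\circ\nabla\psi)$, so the key boundary term becomes $\int\langle(\nabla^2\psi)^{-1}\nabla\psi_{ee},\nabla\xi\rangle\,d\mu=\int\langle\nabla\psi_{ee},\nabla\eta\circ\nabla\psi\rangle\,d\mu$, where the awkward weight $(\nabla^2\psi)^{-1}$ has cancelled. After Cauchy--Schwarz the remainder is controlled by $\int|\nabla\eta|^{2q}/\eta^q\,d\nu$, which is driven to zero because $\nu$ is fully supported. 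With your $\chi_R$ none of this cancellation happens, and the remainder $\int\psi_{ee}\,L\chi_R\,d\mu+2\int\langle(\nabla^2\psi)^{-1}\nabla\chi_R,\nabla\psi_{ee}\rangle\,d\mu$ carries the $(\nabla^2\psi)^{-1}$ weight with no obvious way to estimate or sign it.

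Second, the claimed mechanism for the inequality is wrong. You argue that convexity of $\{V<\infty\}$ and positivity of $(\nabla^2\psi)^{-1}$ let you choose $\chi_R$ so the limiting remainder is nonpositive. Two problems: (a) convexity of $\{V<\infty\}$ is not among the hypotheses of the Proposition (it holds in the later application because $V$ is convex there, but the Proposition is stated for general $V$ with open domain and logarithmic derivatives); and (b) there is no reason the Leibniz remainder inherits a sign from convexity of the domain. Indeed the paper proves that in the case $W<\infty$ everywhere the remainder actually vanishes, giving an \emph{equality}, not an inequality. The inequality in (\ref{l2-ineq}) does not come from a boundary contribution of definite sign; it comes from the second stage of the argument, where a general $W$ (possibly infinite on a set) is approximated by smooth, everywhere-finite $W_n$ via convolution, Step~1 is applied to each $W_n$, and then one passes to the limit by weak compactness of $\nabla^2\psi_n$, Rellich--Kondrashov, and Fatou's lemma. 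Your proposal omits this $W$-approximation entirely, yet it is the part of the argument that actually produces the ``$\ge$'' and handles the case where $\nu$ is not fully supported, so $\nabla^2 W(\nabla\psi)$ may be undefined a priori. Without it the proof is incomplete even if the cutoff issue were repaired.
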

\vskip 2mm
\noindent
\begin{proof}
Observe first that by assumption on $V$ and $W$, $\Omega_V=\{ V < \infty\}$ and $\Omega_W=\{ W < \infty\}$.
Next we note that by Proposition \ref{sobolev-reg}
$\int |\psi_{x_i x_i}|^p d \mu < \infty$ for every  $p>0$.
Using Sobolev embeddings and the fact that $V$ is locally bounded
we get that $\psi_{x_i}$ is locally Sobolev with respect to Lebesgue measure
on $\{V < \infty\}$ and continuous.
\par
\noindent
Let us show that $\psi$ is four times differentiable.
Since  $\nabla \psi$ is well defined and continuous almost everywhere, we can choose  $x_0 \in \{ V < \infty\}$  such that $\nabla \psi (x_0)$ exists.
Since $\{ W< \infty\}$ is open, there exists an open bounded convex neighborhood $U_2 \subset \{ W< \infty\}$ of $\nabla \psi(x_0)$.
By continuity  $U_1 = \nabla \psi^{-1}(U_2) \subset \{ V <\infty\}$ is an open set.  Moreover, since convex functions are locally Lipschitz, $U_1 =\nabla \psi^*(U_2)$  is bounded. 
 Then consider a mass transportation problem of $\mu|_{U_1}$
 onto $\nu|_{U_2}$. Thus $U_1$ and $U_2$ are bounded open sets, $U_2$ is convex and $\nabla \psi$ is the optimal transportation mapping of $\mu|_{U_1}$ onto $\nu|_{U_2}$. These measures have $C^{2,\alpha}$ densities for some $\alpha$,  $0 < \alpha \leq1$,  with respect to the Lebesgue measure.
Hence by Caffarelli's regularity theory, (see e.g., Theorem 4.14  and  Remark 4.15 from \cite{Villani}),  $\psi$ is locally $C^{4,\alpha}$ on $\{ V< \infty\}$.
\par
\noindent
Next we apply the following equation obtained by differentiation of the change of variables formula (see \cite{KK},  \cite{Kol}  for details), 
\begin{equation}
\label{vee=-L}
V_{ee} = - L \psi_{ee} + \langle \nabla^2 W(\nabla \psi) \nabla \psi_e, \nabla \psi_e \rangle + {\rm Tr}\bigl[ (\nabla^2 \psi)^{-1}(\nabla^2 \psi_e)\bigr]^2 .
\end{equation}
Here $L$ is the second-order  differential operator satisfying
$$
\int L \xi \cdot \eta d \mu= - \int \langle (\nabla^2 \psi)^{-1} \nabla \xi, \nabla \eta \rangle d \mu, 
$$
where $\xi$ and $\eta$ are  smooth test functions with compact supports $K_{\xi}, K_{\eta} \subset \{ V < \infty\}$.
\vskip 2mm
\noindent
{\bf Step 1.}  Assume that $\nu$ is fully supported, i.e., $W<\infty$ everywhere.
\par
\noindent
Take a smooth test function $\eta$ that has compact support in $\{ V < \infty\}$, multiply  (\ref{vee=-L}) by $ \xi=\eta(\nabla \psi)$ 
and integrate with respect to $\mu$.
One gets
\begin{align}\label{05052020}
\int_{\{V < \infty\}} V_{ee} \xi d \mu & = \int \langle (\nabla^2 \psi)^{-1} \nabla \psi_{ee}, \nabla (\eta(\nabla \psi)) \rangle d \mu \nonumber
\\&  + \int \langle \nabla^2 W(\nabla \psi) \nabla \psi_e, \nabla \psi_e \rangle \xi d \mu 
+ \int {\rm Tr}\bigl[ (\nabla^2 \psi)^{-1}(\nabla^2 \psi_e)\bigr]^2  \xi d \mu.
\end{align}
Note that
\begin{align*}
-\int \langle & (\nabla^2 \psi)^{-1} \nabla \psi_{ee}, \nabla (\eta(\nabla \psi)) \rangle d \mu 
 = -\int \langle  \nabla \psi_{ee}, \nabla \eta \circ \nabla \psi \rangle d \mu 
 \\& = -\int \langle  \bigl( \nabla^2 \psi \bigr) ^{-\frac{1}{2}} \nabla^2 \psi_{e} \cdot e, \bigl( \nabla^2 \psi \bigr) ^{\frac{1}{2}} \nabla \eta \circ \nabla \psi \rangle d \mu 
  \\& =  \int \langle A \bigl( \nabla^2 \psi \bigr) ^{\frac{1}{2}} \cdot e, \bigl( \nabla^2 \psi \bigr) ^{\frac{1}{2}} \nabla \eta \circ \nabla \psi \rangle d \mu
  \le \int \|A\| \|\nabla^2 \psi\| |\nabla \eta \circ \nabla \psi| d \mu,
 \end{align*}
 where $A =  (\nabla^2 \psi \bigr) ^{-\frac{1}{2}} \nabla^2 \psi_{e} (\nabla^2 \psi \bigr) ^{-\frac{1}{2}} $ and $\| \cdot \|$ is the operator norm.
 Next we note that
 $$
 \|A\|^2 \le \|A\|^2_{HS} = {\rm Tr} A^2 =  {\rm Tr}\bigl[ (\nabla^2 \psi)^{-1}(\nabla^2 \psi_e)\bigr]^2. 
 $$
 Hence for every $\varepsilon >0$
 \begin{align*}
-\int \langle & (\nabla^2 \psi)^{-1} \nabla \psi_{ee}, \nabla (\eta(\nabla \psi)) \rangle d \mu  
\\& \le \varepsilon \int   {\rm Tr}\bigl[ (\nabla^2 \psi)^{-1}(\nabla^2 \psi_e)\bigr]^2 \xi d \mu 
+ \frac{1}{4\varepsilon}  \int  \| \nabla^2 \psi \|^2 \frac{| \nabla (\eta(\nabla \psi)|^2}{\xi} d \mu
\\& = \varepsilon   \int {\rm Tr}\bigl[ (\nabla^2 \psi)^{-1}(\nabla^2 \psi_e)\bigr]^2  \xi d \mu
 + \frac{1}{4\varepsilon} \int \| \nabla^2 \psi \|^2 \frac{|\nabla \eta|^2}{\eta} \circ (\nabla \psi) 
d \mu 
\\& 
\le \varepsilon  \int {\rm Tr}\bigl[ (\nabla^2 \psi)^{-1}(\nabla^2 \psi_e)\bigr]^2  \xi d \mu
+
\frac{1}{4\varepsilon} \Bigl( \int \| \nabla^2 \psi \|^{2p} d \mu \Bigr)^{\frac{1}{p}} \Bigl( \int \frac{|\nabla \eta|^{2q}}{\eta^q}  d \nu \Bigr)^{\frac{1}{q}},
 \end{align*}
where $\frac{1}{p} +\frac{1}{q}=1$. 
Thus, for arbitrary $\varepsilon>0$
\begin{eqnarray} \label{l2-ineq-cs}
&&\int_{\{V < \infty\}}  V_{ee} \xi d \mu + 
\frac{1}{4\varepsilon} \Bigl( \int \| \nabla^2 \psi \|^{2p} d \mu \Bigr)^{\frac{1}{p}} \Bigl( \int \frac{|\nabla \eta|^{2q}}{\eta^q}  d \nu \Bigr)^{\frac{1}{q}} 
\nonumber
\\&&  \ge \int \langle \nabla^2 W(\nabla \psi) \nabla \psi_e, \nabla \psi_e \rangle \xi d \mu 
+ (1-\varepsilon) \int {\rm Tr}\bigl[ (\nabla^2 \psi)^{-1}(\nabla^2 \psi_e)\bigr]^2  \xi d \mu.
\end{eqnarray}
Finally, we want to extract  (\ref{l2-ineq}) from (\ref{l2-ineq-cs}).
To this end we  find a sequence of compactly supported functions $1 \ge \eta_n \ge 0$ with 
$\eta_n \to 1$ pointwise such that $\lim_{n} \int \frac{|\nabla \eta_n|^{2q}}{\eta^q_n} d \nu =0$ and set $\xi_n = \eta_n(\nabla \Phi)$.
We omit the description of the precise construction, since it  can be easily done taking into account that ${\rm supp}(\nu)  = \mathbb{R}^n$.
We apply inequality (\ref{l2-ineq-cs}), where $\xi$ is replaced by $\xi_n$ and pass to the limit letting $n$  to infinity.
Note that  $\int \| \nabla^2 \psi \|^{2p} d \mu < \infty$ by Proposition \ref{KKpropos}. 
Passing to the limit and applying that $\varepsilon>0$ is arbitrary,
one gets  (\ref{l2-ineq}).
Moreover, since the integrals
$\int \| \nabla^2 \psi \|^{2p} d \mu$, $\int {\rm Tr}\bigl[ (\nabla^2 \psi)^{-1}(\nabla^2 \psi_e)\bigr]^2  d \mu $
are  finite, it is clear that 
$$
 \int \langle (\nabla^2 \psi)^{-1} \nabla \psi_{ee}, \nabla (\eta_n(\nabla \psi)) \rangle d \mu \to 0
$$
and we have in fact equality in 
(\ref{l2-ineq}), because we can pass to the limit in (\ref{05052020}).
The proof of Step 1 is complete.
\vskip 2mm
\noindent
{\bf Step 2.}  Proof of  the general case: $W$ is twice H{\"o}lder differentiable on the open convex domain $\{ W <\infty\}$.
\par
\noindent
Approximate $W$ by everywhere finite and smooth convex functions $W_n$  such that 
$\nabla W_n \to \nabla W$ and $\nabla^2 W_n \to \nabla^2 W$  pointwise on $\{ W < \infty\}$.
This can be done with the standard convolution technique: set $e^{-W_n} = e^{-W} * \gamma_{\frac{1}{n}}$, where $\gamma_{\frac{1}{n}}$ is the Gaussian measure with zero mean and variance $\frac{1}{n}$.
By the Prekopa-Leindler inequality we get that every  $W_n$ is convex and, in addition,  smooth on the entire $\mathbb{R}^n$.
According to Step 1, 
\begin{align}
\label{vee}
\int_{\{V< \infty\}} V_{ee}  d \mu  & =
 \int \langle \nabla^2 W_n(\nabla \psi_n) \nabla (\psi_n)_e, \nabla (\psi_n)_e \rangle d \mu 
\nonumber \\& 
+ \int {\rm Tr}\bigl[ (\nabla^2 \psi_n)^{-1}(\nabla^2 (\psi_n)_e)\bigr]^2   d \mu, 
\end{align}
where $\nabla \psi_n$ is the optimal transportation mapping of $\mu$ onto $\nu_n=e^{-W_n} dx$. 
First we observe that it follows from (\ref{d2p}) that
\begin{equation}
\label{sobol-bound}
\sup_n \int \bigl( |\nabla \psi_n|^p + \| \nabla^2 \psi_n \|^p  \bigr) d \mu < \infty, \  \forall p>0.
\end{equation}
We may assume that 
$$
\int_{\{V < \infty\}} V_{ee} d \mu <\infty,
$$
as otherwise there is nothing to prove. Then one has by (\ref{vee})
\begin{eqnarray}\label{Ungleichung}
&&\hskip -15mm \int {\rm Tr}\bigl[ (\nabla^2 \psi_n)^{-1}(\nabla^2 (\psi_n)_e)\bigr]^2   d \mu = \int_{\{V< \infty\}} V_{ee}  d \mu  \nonumber \\
&&\hskip +10mm -  \int \langle \nabla^2 W_n(\nabla \psi_n) \nabla (\psi_n)_e, \nabla (\psi_n)_e \rangle d \mu  \leq \int_{\{V< \infty\}} V_{ee}  d \mu < \infty
\end{eqnarray}
and thus 
$$
\sup_n \int {\rm Tr}\bigl[ (\nabla^2 \psi_n)^{-1}(\nabla^2 (\psi_n)_e)\bigr]^2  d \mu < \infty.
$$ 
Hence
\begin{eqnarray*}
\infty &>& \int_{\{V < \infty\}} V_{ee} d \mu \geq \sup_n \int {\rm Tr}\bigl[ (\nabla^2 \psi_n)^{-1}(\nabla^2 (\psi_n)_e)\bigr]^2  d \mu \\
&=& \sup_n \int  \|(\nabla^2 \psi_n)^{-1/2} \nabla^2 (\psi_n)_e (\nabla^2 \psi_n)^{-1/2} \|_{HS}^2  \  d \mu \geq \sup_n \int \frac{\| \nabla^2 (\psi_n)_e\|_{HS}^2}{\| \nabla^2 \psi_n\|_{HS}^2}  d \mu.
\end{eqnarray*}
By the reverse H\"older inequality we get for all $0 < \varepsilon <1$,
$$
 \int \frac{\| \nabla^2 (\psi_n)_e\|_{HS}^2}{\| \nabla^2 \psi_n\|_{HS}^2}  d \mu \geq \left(\int \| \nabla^2 (\psi_n)_e\|_{HS}^{2-\varepsilon} \right)^ \frac{2}{2-\varepsilon}
 \left(\int \| \nabla^2 \psi_n \|_{HS}^\frac{2(2-\varepsilon)}{\varepsilon} \right)^{- \frac{\varepsilon}{2-\varepsilon}}.
 $$
Together with (\ref{sobol-bound}) 
we obtain the following bound on the third derivatives of $\psi_n$, 
$$
\sup_n \int \| \nabla^2 (\psi_n)_e \|^{2-\varepsilon} d \mu < \infty.
$$
Since $e^{-V}$ is locally stricktly positive inside of $\{V <\infty\}$, we get, in particular, that for every compact set $K \subset \{V<\infty\}$
$$
\sup_n \int_{K} \| \nabla^2 (\psi_n)_e \|^{2-\varepsilon} d x< \infty.
$$
Applying the Rellich--Kondrashov embedding theorem and passing to a subsequence (denoted again by $\{\psi_n\}$),    one can assume that
all the second derivatives $\partial^2_{x_i x_j} \psi_n$  converge almost everywhere.  Applying the same arguments and using the bounds
(\ref{sobol-bound}) one can assume, in addition, that $\psi_n, \partial_{x_i} \psi_n$ have limits almost everywhere  (hence in every $L^p(\mu)$
and $L^p_{loc}(K)$ for all $p>1$ and all compact $K \subset \{V < \infty\}$). In addition, one can assume that the third derivatives 
$(\psi_n)_{x_i x_j x_k}$ converge weakly in $L^{2-\varepsilon}(\mu)$ and $L^{2-\varepsilon}_{loc}(K)$ for every $0<\varepsilon<1$. 
Let us denote the limit of $\psi_n$ by $\psi$. Clearly, $\psi$ is a convex function.  Let us show that 
$$\partial_{x_i} \psi_n \to  \partial_{x_i} \psi.$$
Denote the limit of $\partial_{x_i} \psi_n $ by $f$.
Choose a smooth function $\xi$ with compact support $K \subset \{V < \infty\} $. Using convergence in $L^p_{loc}(dx)$ one gets
$$
\int f \xi dx = \lim_n \int \partial_{x_i} \psi_n \xi dx = - \lim_n \int  \psi_n \partial_{x_i} \xi dx = - \int  \psi \partial_{x_i} \xi dx .
$$
Hence $f$ is the Sobolev partial derivative of $\psi$. Since $\psi$ is convex, it coincides almost everywhere with  $\partial_{x_i} \psi$
in the classical sense.
In the same way we prove that $\psi$ admits second Sobolev derivatives and
$$\partial_{x_i x_j} \psi_n \to  \partial_{x_i x_j} \psi.$$
Finally, using weak convergence of the third derivatives in $L^{1}_{loc}(K)$, we show that $\psi$ admits third order Sobolev derivatives, which are the weak
limits of the corresponding third derivatives of $\psi_n$.
\par
\noindent
Let us pass to the limit in (\ref{vee}). By Fatou's lemma, 
$$
\liminf_n \int \langle \nabla^2 W_n(\nabla \psi_n) \nabla (\psi_n)_e, \nabla (\psi_n)_e \rangle d \mu 
\ge \int \langle\nabla^2 W(\nabla \psi) \nabla \psi_e, \nabla \psi_e \rangle d \mu.
$$
Next we note that
$$
{\rm Tr}\bigl[ (\nabla^2 \psi_n)^{-1}(\nabla^2 (\psi_n)_e)\bigr]^2 = \| A_n\|^2_{HS},
$$
where $\|\cdot\|_{HS}$ is the Hilbert--Schmidt norm and  
$$
A_n= (\nabla^2 \psi_n)^{-1/2} \nabla^2 (\psi_n)_e (\nabla^2 \psi_n)^{-1/2}.
$$
The space of matrix-valued functions $M(x)$ with the norm $\Bigl( \int \| M\|^2_{HS} d\mu \Bigr)^{\frac{1}{2}}$ is a Hilbert space.
By (\ref{Ungleichung}), $\sup_n \int \| A_n\|^2_{HS} d\mu \leq  \int_{\{V < \infty\}} V_{ee} d \mu$, and therefore $\{A_n: n \in \mathbb{N}\}$ is relatively weakly compact in 
this Hilbert space.
Hence there exists  a subsequence of $\{A_n\}$, which we denote again by $\{A_n\}$,  that converges weakly $A_n \to A$  in 
the space of matrix-valued functions. 
Take a matrix valued mapping $M(x)$ such that $\Bigl( \int \| M\|^2_{HS} d\mu \Bigr)^{\frac{1}{2}} <\infty$.
One has
$$
\lim_n \int {\rm Tr}\Bigl( (\nabla^2 \psi_n)^{1/2} M(x) (\nabla^2 \psi_n)^{1/2} A_n \Bigr) d \mu = \int {\rm Tr}\Bigl( (\nabla^2 \psi)^{1/2} M(x) (\nabla^2 \psi)^{1/2} A\Bigr) d \mu.
$$
On the other hand
\begin{align*}
\lim_n & \int {\rm Tr}\Bigl( (\nabla^2 \psi_n)^{1/2} M(x) (\nabla^2 \psi_n)^{1/2} A_n \Bigr) d \mu =\lim_n \int {\rm Tr}\Bigl(  M(x)  \nabla^2 (\psi_n)_e  \Bigr) d \mu 
\\& =\int {\rm Tr}\Bigl(  M(x)  \nabla^2 \psi_e  \Bigr) d \mu .
\end{align*}
Hence
$$
\int {\rm Tr}\Bigl( (\nabla^2 \psi)^{1/2} M(x) (\nabla^2 \psi)^{1/2} A\Bigr) d \mu =\int {\rm Tr}\Bigl(  M(x)  \nabla^2 \psi_e  \Bigr) d \mu. 
 $$
 This implies
$$
A=(\nabla^2 \psi)^{-1/2} \nabla^2 \psi_e   (\nabla^2 \psi)^{-1/2}. 
$$
By the properties of the weak convergence and Fatou's lemma, 
$$
\liminf_n \int \|A_n\|^2_{HS} d\mu \ge \int \|A\|^2_{HS} d\mu.
$$
Passing to the limit in (\ref{vee}),  we get (\ref{l2-ineq}).
\end{proof}
\vskip 3mm
\noindent

\begin{proof}[Proof of Theorem \ref{thm01}]

Let us show that $\psi$ is a smooth function on  $\Omega_\psi =  \mathrm{int} \, (\{ \psi < \infty\})$.
Since 
$$
\int |\nabla \psi|^p d \mu = \int |x|^p d \nu < \infty
$$
for every $p>0$, we get with (\ref{d2p}) that $\int |\psi_{x_i x_i}|^p d \mu < \infty$ for every $p>0$. In particular, by the Sobolev embedding theorem
$\psi$ is locally $C^{1,\alpha}$ on  $\Omega_\psi$. 
Repeating arguments from Proposition \ref{l2ineq}, using continuity
of $\nabla \psi$ and the fact that $\psi$ and  $\psi^*$ are locally 
H{\"o}lder, we obtain that $\psi$ is $C^{2,\alpha}$, $0 < \alpha <1$, by Theorem 4.14  from \cite{Villani}.
Applying higher order regularity theory (see e.g., Remark 4.15 of \cite{Villani}),  we get by bootstrapping arguments  that $\psi$ is  $C^{\infty}$ on $\Omega_\psi$.
\par
\noindent
Thus we are in position to apply Proposition \ref{l2ineq}. In our particular case it reads as 
$$
\int_{\Omega_\psi} \psi_{x_i x_i} d \mu \ge \int \langle\nabla^2 \psi^* (\nabla \psi) \nabla \psi_{x_i}, \nabla \psi_{x_i} \rangle d \mu
+\int {\rm Tr} \bigl[ (\nabla^2 \psi)^{-1} \nabla^2 \psi_{x_i} \bigr]^2 d \mu.
$$
Note that 
$$
\langle \nabla^2 \psi^*  (\nabla \psi) \nabla \psi_{x_i}, \nabla \psi_{x_i} \rangle = \psi_{x_i x_i}.
$$
Here we use that $\nabla^2 \psi^* (\nabla \psi) = \nabla^2 \psi^{-1}$.
\newline
Thus we get $\int {\rm Tr} \bigl[ (\nabla^2 \psi)^{-1} \nabla^2 \psi_{x_i} \bigr]^2 d \mu =0$
and $\nabla^2 \psi_{x_i}  = 0$ on  $\Omega_\psi$.
Hence there exists  a positive matrix $A$, a vector $b$, and an absolute constant $c$ such that
$$
\psi(x) =\frac{1}{2} \langle Ax, x\rangle + \langle b, x \rangle + c,
$$
for every $x$ satisfying $\psi(x) < \infty$.
\newline
Next we note that the push forward measure of 
$$
e^{-\psi} dx = e^{-(\frac{1}{2} \langle Ax, x\rangle + \langle b, x \rangle + c)} dx
$$
under $y= Ax + b = \nabla \psi(x)$ is
$$
C e^{-(\frac{1}{2} \langle A^{-1} (y-b), (y-b)\rangle + \langle b, A^{-1}(y-b) \rangle + c)} dy.
$$
Since 
\begin{align*}
\psi^*(y) = \frac{1}{2} \langle A^{-1} (y-b), (y-b)\rangle - c,
\end{align*}
we immediately obtain that the push forward of $e^{-\psi}$ under $\nabla \psi$ coincides with $C e^{-\psi^*}$
if and only if $b=0$. Thus we get that
$$
\mu = \frac{1}{Z} \  I_{E} \  e^{-\frac{1}{2} \langle Ax, x\rangle }
$$
for some convex set $E$. We conclude the proof with the observation that $E= \mathbb{R}^n$,
as otherwise $\mu$ has no logarithmic derivative.
\end{proof}

\vskip 1cm

\section{ Pinsker type  inequalities}\label{Section-Pinsker}

The original Pinsker  inequality compares two important concepts from information theory, 
the total Variation Distance $V = V(P,Q)$ and Kullback-Leibler divergence $D= D_{KL} (P||Q) $ (see Section \ref{section-Background on $f$-divergence} for the definitions). 
Comparing these two notions has many advantages, such as transferring results from information theory to probability theory or vice versa (see, e.g. \cite{Fedotov-Harremoes-Topsoe2003/1, Fedotov-Harremoes-Topsoe2003/2}).
 Pinsker \cite{Pinsker1960} obtained the following inequality  
\begin{equation} \label{PinskerIneq}
 D \geq \frac{1}{2} V^2 .
\end{equation}
The best constant, $\frac{1}{2}$, is due, 
independently to Csisz\'ar \cite{Csiszar1967}, Kemperman \cite{Kemperman1969} and Kullback \cite{Kullback1967, Kullback1970}.
For  applications of Pinsker's inequality, see e.g. \cite{Barron1986}, \cite{Csiszar1984}, \cite{Topsoe1979}.
\par
\noindent 
The concept of $f$-divergence  is a generalization of Kullback-Leibler divergence.  Thus one wonders whether a Pinsker type inequality holds also for $f$-divergences. This question was  answered by G. Gilardoni \cite{Gilardoni2010} and,   with  different kind of result, by M. Reid and R. Williamson \cite{Reid-Williamson2009}. 
\par
\noindent
G. Gilardoni   \cite{Gilardoni2010} established the following Pinsker type inequality for $f$-divergences.
\begin{theo}\cite{Gilardoni2010}\label{Gilardoni}
Let $f: (0, \infty) \rightarrow \R$ be convex  and $f(1) =0$. Suppose that the convex function $f$ is differentiable up to order 3 at 1 with $f''(1) >0$ and the following inequality holds
\begin{eqnarray}\label{condition-f-div1}
\bigg( f(u) - f'(1) (u-1) \bigg) \left( 1 - \frac{f'''(1)}{3 f''(1)}(u-1) \right) \ \geq \ \frac{f''(1)}{2} (u-1)^2 .
\end{eqnarray}
Then
$$D_f (P,Q) \geq \frac{f''(1)}{2} V^2.$$ 
The constant $\frac{f''(1)}{2}$ is best possible.
If $f$ is concave, the inequalities are reversed.
\end{theo}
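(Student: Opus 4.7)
The plan is a weighted Cauchy--Schwarz argument built directly on the pointwise hypothesis \eqref{condition-f-div1}. I would first reduce to the case $f(1)=f'(1)=0$ by replacing $f$ with $\tilde f(u):=f(u)-f'(1)(u-1)$, which is still convex and satisfies $\tilde f(1)=\tilde f'(1)=0$, $\tilde f''(1)=f''(1)$, and $\tilde f'''(1)=f'''(1)$. Writing $u=p/q$ on the support of $q$, the identity $\int(u-1)\,q\,d\mu=\int(p-q)\,d\mu=0$ gives $D_{\tilde f}(P,Q)=D_f(P,Q)$, so it suffices to prove the claim with $\tilde f$ in place of $f$. In this normalization, hypothesis \eqref{condition-f-div1} reads
$$
\tilde f(u)\,w(u)\ \ge\ \frac{f''(1)}{2}(u-1)^2,\qquad w(u):=1-\frac{f'''(1)}{3f''(1)}(u-1).
$$

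The next step is to verify that $w(u)>0$ for every relevant $u$. Convexity of $\tilde f$ together with $\tilde f(1)=\tilde f'(1)=0$ and $\tilde f''(1)>0$ forces $\tilde f(u)>0$ for every $u\ne 1$: if $\tilde f$ vanished at some $u_0\ne 1$, convexity would make it vanish on the whole segment from $1$ to $u_0$, contradicting $\tilde f''(1)>0$. The displayed inequality then gives $\tilde f(u)w(u)>0$ for $u\ne 1$, so $w(u)>0$; and $w(1)=1$.

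Now I would apply Cauchy--Schwarz with respect to the measure $q\,d\mu$, splitting $|u-1|=\frac{|u-1|}{\sqrt{w(u)}}\cdot\sqrt{w(u)}$:
$$
V(P,Q)^2 \;=\; \Bigl(\int \tfrac{|u-1|}{\sqrt{w(u)}}\,\sqrt{w(u)}\,q\,d\mu\Bigr)^2 \;\le\; \int \tfrac{(u-1)^2}{w(u)}\,q\,d\mu \;\cdot\; \int w(u)\,q\,d\mu.
$$
By the displayed inequality, $\frac{(u-1)^2}{w(u)}\le \frac{2}{f''(1)}\tilde f(u)$, so the first factor is at most $\frac{2}{f''(1)}D_f(P,Q)$. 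The second factor equals $1$ because $\int(u-1)\,q\,d\mu=0$. Combining yields $V(P,Q)^2\le \tfrac{2}{f''(1)}D_f(P,Q)$, which is the claimed inequality; the concave case follows by reversing all inequalities throughout. Sharpness of $f''(1)/2$ I would obtain from the binary alphabet: set $Q=(1/2,1/2)$ and $P_\varepsilon=((1+\varepsilon)/2,(1-\varepsilon)/2)$; then $V(P_\varepsilon,Q)=\varepsilon$ while a third-order Taylor expansion gives $D_f(P_\varepsilon,Q)=\tfrac12[f(1+\varepsilon)+f(1-\varepsilon)]=\tfrac{f''(1)}{2}\varepsilon^2+O(\varepsilon^4)$, so $D_f/V^2\to f''(1)/2$.

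The one non-mechanical step is the positivity of $w$. Without \eqref{condition-f-div1} the weight $w(u)$ would change sign once $|u-1|>3f''(1)/|f'''(1)|$, which would destroy the weighted Cauchy--Schwarz scheme. The role of the hypothesis is precisely to rule this out while simultaneously matching the second-order Taylor behaviour of $\tilde f$ at $u=1$; this same match is what makes the constant $f''(1)/2$ sharp. All other computations are routine.
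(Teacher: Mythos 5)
The paper quotes Theorem \ref{Gilardoni} from \cite{Gilardoni2010} without reproducing a proof and uses it as a black box in Section \ref{Gillardoni}, so there is no internal argument to compare against; what is at issue is simply whether your blind proof is correct, and it is. The reduction to $\tilde f(u)=f(u)-f'(1)(u-1)$ is sound since $\int(p-q)\,d\mu=0$ for probability measures. Your deduction that $w>0$ everywhere is the one non-obvious step and you handle it correctly: convexity together with $\tilde f(1)=\tilde f'(1)=0$ and $\tilde f''(1)>0$ forces $\tilde f(u)>0$ for $u\neq 1$, and combined with \eqref{condition-f-div1} (whose right-hand side is strictly positive off $u=1$) this gives $w(u)>0$, which is exactly what makes the split $|u-1|=\tfrac{|u-1|}{\sqrt{w}}\cdot\sqrt{w}$ and the weighted Cauchy--Schwarz legitimate. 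The collapse of the second factor $\int w\,q\,d\mu$ to $1$ again uses $\int(u-1)\,q\,d\mu=0$, and the first factor is controlled by the rearranged hypothesis $\tfrac{(u-1)^2}{w(u)}\le \tfrac{2}{f''(1)}\tilde f(u)$. For sharpness, the binary family $P_\varepsilon=\big((1+\varepsilon)/2,(1-\varepsilon)/2\big)$, $Q=(1/2,1/2)$ gives $V=\varepsilon$ and, since the odd Taylor terms cancel, $D_f(P_\varepsilon,Q)=\tfrac{f''(1)}{2}\varepsilon^2+o(\varepsilon^2)$ (only third-order differentiability at $1$ is available, so the error is $o(\varepsilon^2)$ rather than the $O(\varepsilon^4)$ you wrote, but this does not affect the limit $D_f/V^2\to f''(1)/2$). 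Whether your argument coincides step-for-step with Gilardoni's own published proof is immaterial for this paper, which only cites the result; your proof is self-contained and valid, and the weighted Cauchy--Schwarz mechanism is indeed the natural way to exploit the linear weight built into \eqref{condition-f-div1}.
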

\par
\noindent
In this section, we will consider  probability densities. We put 
\begin{equation}\label{Q,P,Normalized}
q_\varphi= \frac{\varphi}{\int_{X_{\psi}} \hskip -1mm \varphi}  \hskip 4mm \text{and} \hskip 4mm   p_\varphi=  \frac{ e^{\frac{\langle\grad\varphi, x \rangle}{\varphi}} \mbox{det} \left[\nabla^2 \left(-\ln \varphi \right)\right] }{\varphi \int_{X_{\psi^*}} \hskip -1mm \varphi^{\circ} }
\end{equation}
We use the expressions (\ref{Q,P,Normalized}) to define  the normalized $f$-divergences for log concave  functions \cite{CaglarWerner}. 
\begin{defn} \label{D2Normalized}
Let $f: (0, \infty) \rightarrow \mathbb{R}$ be  a  convex or concave function and let $ \varphi:\R^{n}\rightarrow [0, \infty)$ be a log concave function.
Then the normalized $f$-divergence $\D_f (P\varphi, Q_\varphi)$ of $\varphi$ is 
\begin{equation}\label{div-Logconcave1_Normalized}
\D_f(\varphi) = \D_f(P_\varphi, Q_\varphi)=
\int_{X_{\psi}}   \frac{\varphi}{ \int_{X_{\psi}} \hskip -1mm\varphi}  \ f  \left(
\frac{e^{\frac{\langle\grad\varphi, x \rangle}{\varphi}} }{\varphi ^2} \ \frac{\int_{X_{\psi}} \hskip -1mm \varphi}{\int_{X_{\psi^*}} \hskip -1mm \varphi^\circ} \mbox{det} \left[ \nabla^2 \left( -\ln \varphi \right)\right] \right)dx.
\end{equation}
\end{defn}
\vskip 2mm
\noindent
From the result of G. Gilardoni \cite{Gilardoni2010}, we obtain, under additional conditions (see Section \ref{Gillardoni}),  an  information-theoretic  inequality for log concave functions.
\vskip 3mm
\noindent
\subsection{Pinsker inequalities for log concave functions}\label{Gillardoni}
The following entropy inequality for log concave functions follows as  an immediate consequence of Theorem \ref{Gilardoni} with the densities  (\ref{Q,P,Normalized}) when, for a convex function $f$,  (\ref{condition-f-div1}) is satisfied:
\begin{eqnarray}\label{main2}
\D_f(\varphi) \ \geq \ \frac{f''(1)}{2}  
\left( \int_{X_\psi} \big| \frac{ e^{\frac{\langle\grad\varphi, x \rangle}{\varphi}} \mbox{det} \left[\nabla^2 \left(-\ln \varphi \right)\right] }{\varphi \int_{X_{\psi^*}} \hskip -1mm \varphi^{\circ} } \ - \ \frac{\varphi}{\int_{X_{\psi}} \hskip-1mm \varphi} \big| dx \right)^2 ,
\end{eqnarray}
with  equality  if $\varphi (x) = e^{- \frac{1}{2} {\langle A x, x \rangle}}$ where $A$ is an $n \times n$  positive definite  matrix. 
If $f$ is concave, the inequality is reversed.
\par
\noindent
This inequality  is stronger than the inequality from \cite{CaglarWerner} stated in Theorem \ref{thm00}.   Indeed,  Theorem \ref{thm00}  says that 
$D_f (\varphi) \geq f(1)$ and $f(1)=0$  for probability densities. 
Example \ref{ConcreteExample} shows  that the right hand side of (\ref{main2}) however  is not always 0.
\vskip 2mm
\noindent
We want to concentrate on the case when $f(t) = -\ln t$.  Then the assumptions of Gilardoni 's theorem hold and we get the following corollary.
\par
\noindent
Recall also that  $\operatorname{Ent}(\varphi) =\int_{\Omega_\psi} \varphi \ln \left( \varphi \right) dx  \ -  \ \int_{\Omega_\psi} \varphi \ln \left( \int_{\Omega_\psi} \varphi \right) dx $.
By $g$, we denote the Gaussian which has entropy $\operatorname{Ent}(g) = - \frac{n}{2}  \ln (2 \pi e)$.  We also use that for functions  $ \varphi \in C^2$, we have that $X_\psi = \Omega_\psi$ and that
$ n \int_{\Omega_\psi} \varphi = -  \int_{\Omega_\psi}  \hskip -1mm \langle\grad\varphi, x \rangle dx $.

\begin{cor}\label{PinskerIneqFunctional}
Let $ \psi:\R^{n}\rightarrow \mathbb{R}$ be a
convex function such that $\varphi = e^{-\psi}$ is a probability density. Then
\begin{eqnarray}\label{eqn0:Korollar1}  
 \hskip -5mm  \int_{X_\psi}   \ \ln \bigg(\det \left( \nabla^2  \psi \right)\bigg)  e^{- \psi(x) } dx  &\leq& 2\left[ \operatorname{Ent}(\varphi) - \operatorname{Ent}(g) \right] 
+ \ln \left( \frac{\int_{X_{\psi^*}} e^{- \psi^*} }{(2  \pi)^n}\right)  \nonumber \\
& - &\frac{1}{2}
\left(
 \int_{X_\psi}  \left| \frac{ e^{\psi - \langle \grad \psi, x \rangle} \mbox{det} \left (\nabla^2 \psi \right) } { \int_{X_{\psi^*}} e^{-\psi^*} } 
\ - \   e^{-\psi}  \right| dx 
\right)^2, 
\end{eqnarray}
with equality if $\varphi (x) = e^{- \pi {\langle A x, x \rangle}}$ where $A$ is an $n \times n$  positive definite  matrix with $\det (A) =1$.
\end{cor}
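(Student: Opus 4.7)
The plan is to apply inequality (\ref{main2}), the Pinsker-type inequality for log-concave functions derived from Theorem \ref{Gilardoni}, to the specific choice $f(t)=-\ln t$, and then translate the resulting bound on $\mathcal{D}_f(\varphi)$ into the desired inequality by an integration-by-parts computation.

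First I would check that $f(t)=-\ln t$ satisfies the hypotheses of Theorem \ref{Gilardoni}: clearly $f$ is convex, $f(1)=0$, it is smooth at $1$, and one computes $f'(1)=-1$, $f''(1)=1$, $f'''(1)=-2$. The Gilardoni condition (\ref{condition-f-div1}) then becomes
\begin{equation*}
\bigl(-\ln u+(u-1)\bigr)\Bigl(1+\tfrac{2}{3}(u-1)\Bigr)\;\geq\;\tfrac{1}{2}(u-1)^{2},
\end{equation*}
which is a standard one-variable inequality that can be verified by expanding the series at $u=1$ and checking nonnegativity of the remainder (this is the original case treated by Pinsker/Csisz\'ar, so it is known). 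Consequently (\ref{main2}) applies with constant $\tfrac{1}{2}$ and yields
\begin{equation*}
\mathcal{D}_{-\ln}(\varphi)\;\geq\;\tfrac{1}{2}\left(\int_{X_\psi}\left|\frac{e^{\psi-\langle\nabla\psi,x\rangle}\det(\nabla^{2}\psi)}{\int e^{-\psi^{*}}}-e^{-\psi}\right|dx\right)^{2},
\end{equation*}
which is already the $V^{2}$ term appearing on the right-hand side of the corollary.

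Next I would evaluate $\mathcal{D}_{-\ln}(\varphi)$ explicitly. Using $\int\varphi=1$ and substituting $\varphi=e^{-\psi}$, $\langle\nabla\varphi,x\rangle/\varphi=-\langle\nabla\psi,x\rangle$, $\nabla^{2}(-\ln\varphi)=\nabla^{2}\psi$ in (\ref{div-Logconcave1_Normalized}) gives
\begin{equation*}
\mathcal{D}_{-\ln}(\varphi)=\int_{X_\psi}e^{-\psi}\bigl[\langle\nabla\psi,x\rangle-2\psi+\ln\!\textstyle\int e^{-\psi^{*}}-\ln\det(\nabla^{2}\psi)\bigr]dx.
\end{equation*}
Then I would use the three identities:
(i) $\int e^{-\psi}\langle\nabla\psi,x\rangle\,dx=n$, which follows from $n\int\varphi=-\int\langle\nabla\varphi,x\rangle\,dx$ (integration by parts, valid since $\varphi\in C^{2}$ and the paper reminds us of this identity just before the corollary);
(ii) $-\int \psi e^{-\psi}\,dx=\int\varphi\ln\varphi\,dx=\operatorname{Ent}(\varphi)$, since $\int\varphi=1$;
(iii) $\operatorname{Ent}(g)=-\tfrac{n}{2}\ln(2\pi e)$, so $n=-2\operatorname{Ent}(g)-n\ln(2\pi)$.
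Substituting, the expression for $\mathcal{D}_{-\ln}(\varphi)$ collapses to
\begin{equation*}
\mathcal{D}_{-\ln}(\varphi)=2\bigl[\operatorname{Ent}(\varphi)-\operatorname{Ent}(g)\bigr]+\ln\!\Bigl(\tfrac{\int e^{-\psi^{*}}}{(2\pi)^{n}}\Bigr)-\int_{X_\psi}e^{-\psi}\ln\det(\nabla^{2}\psi)\,dx.
\end{equation*}
Combining this equality with the Pinsker-type lower bound on $\mathcal{D}_{-\ln}(\varphi)$ and rearranging yields exactly (\ref{eqn0:Korollar1}).

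For the equality statement, I would observe that $\varphi(x)=e^{-\pi\langle Ax,x\rangle}$ with $\det A=1$ is a Gaussian probability density (the normalization $\int\varphi=1$ forces the coefficient $\pi$ together with $\det A=1$). For such $\varphi$ the integrand in the total-variation term vanishes identically, since $\nabla^{2}\psi$, $\psi$, and $\langle\nabla\psi,x\rangle$ combine to reproduce $e^{-\psi}\int e^{-\psi^{*}}$ via the transformation identity (\ref{transformation}); simultaneously one can verify directly that Jensen's inequality underlying Theorem \ref{Gilardoni} is tight in the Gaussian case. The main (minor) obstacle in the whole argument is the bookkeeping in the substitution step above, in particular making sure the integration-by-parts identity $\int e^{-\psi}\langle\nabla\psi,x\rangle\,dx=n$ is applied correctly and that the normalizations from (\ref{Q,P,Normalized}) are respected when specializing (\ref{main2}) to the probability-density setting.
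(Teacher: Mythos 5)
Your proposal is correct and follows exactly the route the paper intends: specialize the Pinsker-type inequality (\ref{main2}) to $f(t)=-\ln t$ (after verifying Gilardoni's condition (\ref{condition-f-div1})), compute $\D_{-\ln}(\varphi)$ explicitly using $\int e^{-\psi}\langle\nabla\psi,x\rangle\,dx=n$ and $\operatorname{Ent}(g)=-\tfrac{n}{2}\ln(2\pi e)$, and rearrange. The paper states this corollary without writing out the computation, so your derivation simply fills in the bookkeeping it leaves implicit.
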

\vskip 2mm
\noindent
{\bf Remarks.} 
\par
\noindent
(i)  Note that 
$$
 \int_{X_\psi}  \left| \frac{ e^{\psi - \langle \grad \psi, x \rangle} \mbox{det} \left (\nabla^2 \psi \right) } { \int_{X_{\psi^*}} e^{-\psi^*} } 
\ - \   e^{-\psi}  \right| dx  \geq  \left| \int_{X_\psi} \frac{ e^{\psi - \langle \grad \psi, x \rangle} \mbox{det} \left (\nabla^2 \psi \right) } { \int_{X_{\psi^*}} e^{-\psi^*} } 
\ - \   e^{-\psi}  \ dx \right|  =0.
$$
The last equality holds as $\varphi$ is a probability density and by (\ref{transformation}). 
Therefore inequality (\ref{eqn0:Korollar1}) implies the following inequality, which was proved in   \cite{CaglarWerner}, 
\begin{eqnarray}\label{Gleichung2}
 \int_{X_\psi}   \ \ln \bigg(\det \left( \nabla^2  \psi \right)\bigg)  e^{- \psi(x) } dx  &\leq& 2\left[ \operatorname{Ent}(\varphi) - \operatorname{Ent}(g) \right] 
+ \ln \left( \frac{\int_{X_\psi^*} e^{- \psi^*} }{(2  \pi)^n}\right).
\end{eqnarray}
\par
\noindent
(ii) The functional Blaschke Santalo inequality \cite{ArtKlarMil, KBallthesis, FradeliziMeyer2007,Lehec2009} implies that,  for a probability density $\varphi$,   $\left( \frac{\int_{X_\psi^*} e^{- \psi^*} }{(2  \pi)^n}\right) \leq 1$.
Therefore inequality (\ref{Gleichung2}) implies 
\begin{eqnarray}\label{Gleichung3}
 \int_{X_\psi}   \ \ln \bigg(\det \left( \nabla^2  \psi \right)\bigg)  e^{- \psi(x) } dx  &\leq& 2\left[ \operatorname{Ent}(\varphi) - \operatorname{Ent}(g) \right] ,
\end{eqnarray}
which was proved in  \cite{ArtKlarSchuWer}. 
Thus inequality (\ref{eqn0:Korollar1}) is the strongest of those entropy inequalities. Indeed, the next example shows that the additional term on the right hand side of (\ref{eqn0:Korollar1})
is not equal to $0$ in general.
\vskip 3mm
\noindent
\begin{exam}\label{ConcreteExample}
Let $p>1$ and $ \varphi:\mathbb R^{n}\rightarrow\mathbb R$ be given by $ \varphi(x)=  \frac{1}{A} \cdot e^{- \frac{1}{p} \sum_{i=1}^n |x_i|^p}$, where $A= \int e^{- \frac{1}{p} \sum_{i=1}^n |x_i|^p} = 2^n \big(\Gamma (\frac{1}{p}) \ p^{\frac{1-p}{p}}\big)^n $. 
Then $\varphi^\circ = A \cdot e^{- \frac{1}{q} \sum_{i=1}^n |x_i|^q}$, where $\frac{1}{p} + \frac{1}{q} = 1$.  Moreover,
$$
\int \varphi^\circ = \int  e^{-\psi^*} =  \int  A e^{- \frac{1}{q} \sum_{i=1}^n |x_i|^q }= A \cdot 2^n  \left(\Gamma \left(\frac{1}{q}\right) \ q^{\frac{1-q}{q}}\right)^n ,
$$
which is just $(2 \pi)^n$ if $p=q=2$.  If $p$ is not $2$, then it is not necessarily $2\pi$.
And
\begin{eqnarray*}
 && \int  \left| \frac{ e^{\psi - \langle \grad \psi, x \rangle} \mbox{det} \left (\nabla^2 \psi \right) } { \int  e^{-\psi^*} } 
\ - \   e^{-\psi}  \right| dx  \nonumber \\
&& \hskip 3cm =  \int  \left|  \frac{  (p-1)^n \prod_{i=1}^{n} x_i^{p-2} }{2^n \big(\Gamma(\frac{1}{q}) \ q^{\frac{1-q}{q}} \big)^n } \  e^{ \frac{1-p}{p} \sum_{i=1}^n |x_i|^p}  \ - \  A^{-1} e^{- \frac{1}{p} \sum_{i=1}^n |x_i|^p}  \right| dx,
\end{eqnarray*}
which is only equal to $0$,  if $p=q=2$.  If $p$ is not $2$, then it is not necessarily $0$.
\end{exam}
\vskip 3mm

\section{Entropy inequalities}\label{SectionResults1}
The following theorem, which provides  bounds for $f$ divergence,  was proved by  S. Dragomir  \cite{DRAGOMIR_1}.
Dragomir proved the  theorem in the discrete case, that is $p, q \in \R^n_+$, $\nu$ is the counting measure on $\{1, \cdots,n\}$, $P=p  \nu_n$ and $Q= q \nu_n$.
Then
$$
D_f ( P , Q) = \sum_{i=1}^n f\left(\frac{p_i}{q_i} \right ) q_i .
$$
\begin{theo}\label{thmDragomirOriginal}\cite{DRAGOMIR_1}
Suppose that the convex function   $f : (0, \infty) \rightarrow \R $  is differentiable. Then we have for for all discrete densities $p, q \in \R^n_+$, 
\begin{eqnarray}\label{StatementthmDragomirOriginal}
f'(1) \left( P_n - Q_n \right)  \ \leq  \ D_f ( P , Q) - f(1) Q_n  \
\leq \  D_{f'} \left( \frac{ P^2 }{Q } , P \right) - D_{f'} (P , Q),
 \end{eqnarray}
 where $f' : (0, \infty) \rightarrow \R $ is the derivative of $f$ and $P_n = \sum_{i=1}^{n} p_i >0$,  $Q_n = \sum_{i=1}^{n} q_i >0$.
If $f$ is concave, the inequality is reversed.  
If $f$ is strictly convex (respectively  strictly concave) and $p_i, q_i >0$ for all $1 \leq i \leq n$,  then equality holds iff $p = q$.
\end{theo}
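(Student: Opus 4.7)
The plan is to derive both inequalities by applying the two standard one-sided tangent inequalities for a differentiable convex function and then summing against the weights $q_i$. Recall that for $f : (0,\infty) \to \mathbb{R}$ convex and differentiable, and any $a, b \in (0,\infty)$,
\begin{equation*}
f'(a)(b-a) \ \leq \ f(b) - f(a) \ \leq \ f'(b)(b-a),
\end{equation*}
with equality (when $f$ is strictly convex) if and only if $a = b$.

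For the lower bound, I would apply the left-hand tangent inequality with $a = 1$ and $b = p_i/q_i$, obtaining
\begin{equation*}
f'(1)\!\left(\tfrac{p_i}{q_i} - 1\right) \ \leq \ f\!\left(\tfrac{p_i}{q_i}\right) - f(1).
\end{equation*}
Multiplying by $q_i > 0$ and summing over $i = 1, \dots, n$ gives
\begin{equation*}
f'(1)(P_n - Q_n) \ \leq \ D_f(P,Q) - f(1)\, Q_n,
\end{equation*}
which is the left inequality in \eqref{StatementthmDragomirOriginal}.

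For the upper bound, I would apply the right-hand tangent inequality with the same choice $a = 1$, $b = p_i/q_i$, giving
\begin{equation*}
f\!\left(\tfrac{p_i}{q_i}\right) - f(1) \ \leq \ f'\!\left(\tfrac{p_i}{q_i}\right)\!\left(\tfrac{p_i}{q_i} - 1\right).
\end{equation*}
Multiplying by $q_i$ and summing yields
\begin{equation*}
D_f(P,Q) - f(1)\, Q_n \ \leq \ \sum_{i=1}^{n} f'\!\left(\tfrac{p_i}{q_i}\right) p_i \ - \ \sum_{i=1}^{n} f'\!\left(\tfrac{p_i}{q_i}\right) q_i.
\end{equation*}
The final step is a bookkeeping identification: since the ratio of densities for $P^2/Q$ against $P$ at the $i$-th atom is $(p_i^2/q_i)/p_i = p_i/q_i$, the definition of $f$-divergence gives $D_{f'}(P^2/Q, P) = \sum_i f'(p_i/q_i)\, p_i$, while $D_{f'}(P,Q) = \sum_i f'(p_i/q_i)\, q_i$. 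Substituting produces the right inequality in \eqref{StatementthmDragomirOriginal}.

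The concave case follows by replacing $f$ by $-f$. The equality characterization is immediate: when $f$ is strictly convex (resp. concave), equality in either tangent inequality at each $i$ forces $p_i/q_i = 1$, hence $p = q$. There is essentially no obstacle here; the only care needed is to verify the algebraic identity $D_{f'}(P^2/Q, P) - D_{f'}(P,Q) = \sum f'(p_i/q_i)(p_i - q_i)$, which is immediate from the discrete definition of $D_f$.
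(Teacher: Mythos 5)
Your proof is correct, and the upper bound is proved exactly as in the paper: both you and the authors invoke the two‑sided tangent inequality \eqref{concavity} at $s=1$, $t=p_i/q_i$, multiply by $q_i$, sum, and identify $\sum_i f'(p_i/q_i)p_i - \sum_i f'(p_i/q_i)q_i$ with $D_{f'}(P^2/Q,P) - D_{f'}(P,Q)$.

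For the lower bound your route differs from the paper's in a small but genuine way. You apply the left-hand tangent inequality $f'(1)(t-1) \le f(t)-f(1)$ pointwise at $t=p_i/q_i$ and sum, which yields $f'(1)(P_n-Q_n)$ directly. The paper instead first proves the stronger lower bound $I_Q\,f(I_P/I_Q)$ via Jensen's inequality (the left inequality of \eqref{generalDragomir} in Theorem~\ref{thmDragomir}) and then, in Remark~(ii), applies the tangent inequality once more with $t=I_P/I_Q$, $s=1$ to weaken it to $f(1)I_Q + f'(1)(I_P-I_Q)$, thereby recovering \eqref{StatementthmDragomirOriginal}. Your direct tangent argument is slightly more elementary and gives the $p=q$ equality characterization for the left inequality immediately (strict convexity forces $p_i/q_i=1$ at every atom), whereas the Jensen lower bound has equality iff $p=cq$, so the paper must also track the second tangent application to pin down $c=1$ in the chained form. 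What the paper's route buys is the sharper intermediate bound $I_Q\,f(I_P/I_Q)$, which is what they actually use in the subsequent corollaries; your route buys economy and a one-step equality analysis. Both are valid proofs of the stated theorem.
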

\vskip 3mm
\noindent
The result  still holds,
 if the discrete densities are replaced by   $\mu$-a.e. positive  general  densities $p$ and $q$ defined on a measure space $(X,  \mu)$.
The proof is  the  same as the one  given by  by Dragomir in the discrete case  \cite{DRAGOMIR_1}.
We include it for completeness.
We state it when $f : (0, \infty) \rightarrow \R $  is convex. If $f$ is concave, the inequalities are reversed.
We use the notation $I_P=\int_X p d\mu$ and $I_Q=\int_X q d\mu$.
\vskip 3mm
\begin{theo} \label{thmDragomir}
Let $f : (0, \infty) \rightarrow \R $  be  differentiable and convex. Let $p$ and $q$ be  $\mu$-a.e. positive  densities  on $X$ such that $I_p$ and $I_q$ are finite.
Then 
\begin{equation}\label{generalDragomir}
 I_Q    \  f\left(\frac{ I_P}{ I_Q }  \right)  \leq   D_f ( P , Q)   \leq  f(1) I_Q  +  D_{f'} \left( \frac{ P^2 }{Q } , P \right) - D_{f'} (P , Q)
\end{equation}
 If $f$ is linear, equality holds on both sides 
and we get
\begin{equation*}
D_f ( P , Q)  =   f(1) I_Q  \ + f'(1) \left( I_Q - I_P \right) .
\end{equation*}
If $f$ is strictly convex,  equality holds on the left  inequality if and only if $p= c \  q$ $\mu$-a.e where  $c>0$ is a constant and equality holds on the right inequality if and only if 
$p=  q$  $\mu$-a.e .
\end{theo}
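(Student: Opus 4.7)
The plan is to adapt Dragomir's discrete argument \cite{DRAGOMIR_1} to the measure-theoretic setting using only two ingredients: Jensen's inequality for the lower bound and the subgradient inequality for convex differentiable functions for the upper bound. For the left inequality, I would introduce the probability measure $d\tilde\mu=\frac{q}{I_Q}\,d\mu$ on $X$, which is well defined because $q>0$ $\mu$-a.e.\ and $0<I_Q<\infty$. Rewriting $D_f(P,Q)=I_Q\int_X f(p/q)\,d\tilde\mu$ and noting that $\int_X(p/q)\,d\tilde\mu=I_P/I_Q$, Jensen's inequality applied to the convex function $f$ immediately yields $D_f(P,Q)\ge I_Q f(I_P/I_Q)$. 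When $f$ is strictly convex, the Jensen equality case forces $p/q$ to be $\tilde\mu$- (hence $\mu$-) a.e.\ constant, i.e.\ $p=cq$ for some positive constant $c$.

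For the right inequality, I would apply the gradient inequality $f(y)\ge f(x)+f'(x)(y-x)$ pointwise with $y=1$ and $x=p(z)/q(z)$, which rearranges to the pointwise bound $f(p/q)-f(1)\le f'(p/q)(p/q-1)$. Multiplying by $q$ and integrating gives
$$D_f(P,Q)-f(1)I_Q\ \le\ \int_X f'(p/q)\,p\,d\mu-\int_X f'(p/q)\,q\,d\mu.$$
The first integral equals $D_{f'}(P^2/Q,P)$ by the defining formula \eqref{def:fdiv2} (since $f'((p^2/q)/p)=f'(p/q)$), and the second equals $D_{f'}(P,Q)$, which closes the chain \eqref{generalDragomir}. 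In the strictly convex case, the gradient inequality is strict unless $p/q=1$, so equality here forces $p=q$ $\mu$-a.e.

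For the linear case $f(x)=ax+b$, both Jensen's inequality and the tangent inequality become equalities, and a direct substitution confirms that each of the three expressions in \eqref{generalDragomir} evaluates to the common value $aI_P+bI_Q$, from which the asserted closed-form identity follows. The argument presents no serious analytic obstacle; the only points requiring care are bookkeeping, namely ensuring that $f(p/q)$, $f'(p/q)\,p$, and $f'(p/q)\,q$ are $\mu$-integrable (implicit in the finiteness of the divergences appearing in the statement) and that the ratio $p/q$ is well defined $\mu$-a.e., which follows from the $\mu$-a.e.\ positivity hypothesis on $p$ and $q$. The proof is thus a literal transcription of Dragomir's discrete argument with sums replaced by integrals, which is why the statement extends verbatim from the discrete to the general measure-theoretic setting.
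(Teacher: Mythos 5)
Your proof is correct and follows exactly the paper's own argument: Jensen's inequality applied to the probability measure $q\,d\mu/I_Q$ for the lower bound, and the tangent-line (gradient) inequality $f(t)-f(1)\le f'(t)(t-1)$ at $t=p/q$, multiplied by $q$ and integrated, for the upper bound, with the same identification of the resulting integrals as $D_{f'}(P^2/Q,P)-D_{f'}(P,Q)$ and the same Jensen/strict-convexity equality analysis. One small caution: your computation that all three expressions equal $aI_P+bI_Q$ when $f(x)=ax+b$ is right, but this actually shows the closed-form identity as printed in the theorem has a sign typo (it should read $f'(1)(I_P-I_Q)$, consistent with Remark (ii) after the theorem), so one should not assert that "the asserted closed-form identity follows" verbatim.
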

\par
\noindent
\begin{proof}[Proof of Theorem \ref{thmDragomir}]
By Jensen's inequality, 
\begin{eqnarray*}
D_f ( P , Q)  &=& \int_X f\left(\frac{p}{q}\right)  q d \mu  = I_Q  \   \int_X f \left(\frac{p}{q}\right)  \frac{q}{I_Q} d \mu  \ \geq \  I_Q  \   f  \left( \int_X   \frac{p}{I_Q} d \mu  \right) \\
&=& I_Q  \   f \left(   \frac{I_P}{I_Q} \right), 
\end{eqnarray*}
which proves the left inequality of (\ref{generalDragomir}).
\par
\noindent
It is easy to see that equality holds on both sides of (\ref{generalDragomir}) if $f$ is linear.  If $f$ is strictly convex, 
equality holds in the left inequality of   (\ref{generalDragomir}) if and only if equality holds  in Jensen's inequality which happens if and only if  $p= c \ q$ $\mu$-a.e where  $c>0$ is a constant. 
\par
\noindent
Since $f$ is differentiable and convex, we have for all $s, t \in (0, \infty)$, 
\begin{equation}\label{concavity}
 f' (t) (t-s ) \geq f(t) - f(s) \geq f'(s) (t-s).
\end{equation}
Let  $x \in X$ be such that $q(x) >0$ and  let $ t = \frac{p(x)}{q(x)}$. As $q>0$ $\mu$-a.e., it is enough to consider only such $x$.  Let $ s=1$. By  inequality (\ref{concavity}),
$$  f' \left(\frac{p(x)}{q(x)}\right) \left(\frac{p(x)}{q(x)}- 1 \right) \geq f \left(\frac{p(x)}{q(x)}\right) - f(1) 
.$$
We multiply both sides by $q(x)$ and integrate
$$  \int_X (p(x) -q(x) ) f' \left(\frac{p(x)}{q(x) } \right) d \mu(x)  \geq  D_f(P, Q) - f(1) \int_X q(x) d \mu(x) 
.$$
Since
$$ \int_X (p(x) -q(x) ) f' \left(\frac{p(x)}{q(x) } \right) d \mu(x)  =    D_{f'} \left( \frac{ P^2 }{Q } , P \right) - D_{f'} (P , Q), $$
we obtain the desired result.
\par
\noindent
Equality holds in (\ref{concavity}) for  a strictly convex   function $f$ iff $s=t$. Therefore, if $f$ is strictly  convex,  equality holds  on the right inequality of (\ref{generalDragomir}),
iff $p=q$  $\mu$-a.e .
\end{proof}
\par
\noindent
{\bf Remark.}  Under  the assumptions of Theorem \ref{thmDragomir}, we have that 
\par
\noindent
(i)  When $p$ and $q$ are probability densities, then
\begin{equation}\label{ProbaDragomir}
f(1)   \ \leq  \ D_f ( P , Q) 
\  \leq  f(1)   \ +  D_{f'} \left( \frac{ P^2 }{Q } , P \right) - D_{f'} (P , Q) 
\end{equation}
and  that  $D_f ( P , Q)  =   f(1)$, if $f$ is linear. When  $f$ is strictly convex,  equality holds in both  inequalities if and only if $p=  q$ $\mu$-a.e.
\par
\noindent
(ii) 
If we let $t= \frac{I_P}{I_Q}$ and $s=1$ in (\ref{concavity}), then  
$$
I_Q  \   f \left(   \frac{I_P}{I_Q} \right) \geq  f(1) I_Q  \ + f'(1) \left( I_P - I_Q \right), 
$$
which, together with the  upper bound of (\ref{generalDragomir}), leads to inequalities corresponding to (\ref{StatementthmDragomirOriginal}), 
\begin {equation}
 f'(1) \left( I_P - I_Q \right) \ \leq  \ D_f ( P , Q)  - f(1) I_Q 
\leq  D_{f'} \left( \frac{ P^2 }{Q } , P \right) - D_{f'} (P , Q).
\end{equation}
When $f$ is linear, equality holds in both inequalities. 
When $f$ is  strictly convex,  equality holds in both inequalities if and only  if $p =  q$ $\mu$-a.e.
\vskip 3mm
\noindent
With the identity (\ref{transformation}), the following corollary is immediate from Theorem \ref{thmDragomir}. 
\begin{cor}\label{corDragomir} 
Let $ \varphi:\R^{n}\rightarrow [0, \infty)$ be a
log-concave
function and $f : (0, \infty) \rightarrow \R $  be  differentiable and convex. Then
\begin{eqnarray}\label{StatementthmDragomir}
&& \hskip -10mm f\left( \frac{\int_{X_{\psi^*}} \varphi^{\circ} }{\int_{X_{\psi}} \varphi}  \right)  \int_{X_{\psi}}  \varphi   \ \leq  \nonumber \\
&& \ D_f ( P_{\varphi} , Q_{\varphi}) \
\leq \ f(1) \int_{X_{\psi}} \varphi  +  D_{f'} \left( \frac{ P_{\varphi}^2 }{Q_{\varphi} } , P_{\varphi} \right) - D_{f'} (P_{\varphi} , Q_{\varphi}).
 \end{eqnarray}
 \par
\noindent
If $f$ is concave, the inequality  is reversed. If $f$ is linear, equality holds in both inequalities.
\par
\noindent
If $f$ is strictly convex or strictly concave,   then equality holds on the left hand side if  $\varphi (x) = c \ e^{- \frac{1}{2} {\langle A x, x \rangle}}$ where $A$ is an $n \times n$  positive definite  matrix and $c>0$ is an absolute constant.
\par
\noindent
 If   $\varphi$ is in addition $C^2$, then equality holds on the left hand side iff  $\varphi (x) = c \ e^{- \frac{1}{2} {\langle A x, x \rangle}}$ where $A$ is an $n \times n$  positive definite  matrix and $c>0$ is an absolute constant.
 \par
\noindent
If $f$ is strictly convex or strictly concave,   then equality holds on the right hand side if  $\varphi (x) =  e^{- \frac{1}{2} {\langle A x, x \rangle}}$ where $A$ is an $n \times n$  positive definite  matrix with $\det (A) =1$.
\par
\noindent
If  $\varphi$ is in addition $C^2$, then equality holds on the right hand side iff  $\varphi (x) = e^{- \frac{1}{2} {\langle A x, x \rangle}}$ where $A$ is an $n \times n$  positive definite  matrix with $\det (A) =1$.
\end{cor}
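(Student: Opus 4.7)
The plan is to apply Theorem \ref{thmDragomir} to the (unnormalized) densities $p_\varphi$ and $q_\varphi$ from (\ref{Q,P,Normalized}) on the measure space $(\R^n, dx)$. First I would compute the total masses: $I_{Q_\varphi} = \int q_\varphi\, dx = \int_{X_\psi}\varphi\,dx$ is immediate, while $I_{P_\varphi} = \int p_\varphi\, dx$ equals $\int_{X_{\psi^*}}\varphi^\circ\,dx$ by the change-of-variables identity (\ref{transformation}) (cf.\ also (\ref{sublimit})). Moreover, the ratio $p_\varphi/q_\varphi = \varphi^{-2}\, e^{\langle\nabla\varphi, x\rangle/\varphi}\det[\nabla^2(-\ln \varphi)]$ is precisely the argument of $f$ in definition (\ref{div-Logconcave1}), so the $f$-divergence $D_f(P_\varphi, Q_\varphi)$ from (\ref{def:fdiv2}) coincides with $D_f(\varphi)$.

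Substituting $I_{P_\varphi} = \int\varphi^\circ$ and $I_{Q_\varphi} = \int\varphi$ into both sides of (\ref{generalDragomir}) then gives exactly (\ref{StatementthmDragomir}) for convex $f$. The reversal for concave $f$ and the tightness of both inequalities when $f$ is linear follow directly from the corresponding assertions of Theorem \ref{thmDragomir}; note that $f$ linear makes $f'$ constant, so the two $D_{f'}$ terms in the upper bound cancel.

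For the sufficiency of the Gaussian ansatz $\varphi(x) = c\,e^{-\frac{1}{2}\langle Ax, x\rangle}$ in the equality statements, I would compute $\nabla \psi(x) = Ax$ and $\det \nabla^2 \psi = \det A$, which gives the constant ratio $p_\varphi(x)/q_\varphi(x) \equiv \det(A)/c^2$. A constant ratio makes Jensen's inequality an equality, so the left inequality is tight for any positive definite $A$ and any $c > 0$. The right inequality additionally requires $p_\varphi \equiv q_\varphi$, which forces $\det A = c^2$; in the normalization $c = 1$ this reduces to $\det A = 1$, matching the stated form.

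The converse under the additional $C^2$ hypothesis is where Theorem \ref{thm01} is essential and constitutes the main non-trivial step. Under $C^2$ one has $X_\psi = \Omega_\psi$, and equality in the left inequality for strictly convex $f$ forces $p_\varphi/q_\varphi$ to be a.e.\ constant on $\Omega_\psi$; this is exactly the Monge-Amp\`ere equation (\ref{eql21}). The uniqueness result Theorem \ref{thm01} (whose proof occupied the previous section via optimal transportation and Caffarelli's regularity theory) then yields that $\psi(x) = \tfrac{1}{2}\langle Ax, x\rangle + c_0$, hence the stated Gaussian form for $\varphi$. Analogously, equality on the right forces $p_\varphi = q_\varphi$ pointwise, which is (\ref{eql21}) with constant $1$; integrating and invoking (\ref{transformation}) yields the normalization $\int\varphi = \int\varphi^\circ$, and Theorem \ref{thm01} combined with this normalization pins down $\psi(x) = \tfrac{1}{2}\langle Ax, x\rangle$ with $\det A = 1$. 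Thus everything beyond the purely formal application of Theorem \ref{thmDragomir} is absorbed into the previously established Monge-Amp\`ere uniqueness.
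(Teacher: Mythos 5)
Your proposal is correct and follows essentially the same route as the paper: apply Theorem \ref{thmDragomir} to the unnormalized densities $p_\varphi, q_\varphi$ using the identity (\ref{transformation}) to identify $I_{P_\varphi}=\int\varphi^\circ$, verify equality directly for Gaussians, and invoke the Monge--Amp\`ere uniqueness of Theorem \ref{thm01} for the converse under the $C^2$ hypothesis. The only cosmetic difference is that you verify the Gaussian equality case by observing the ratio $p_\varphi/q_\varphi=\det(A)/c^2$ is constant (hence Jensen is tight), whereas the paper computes $D_f(P_\varphi,Q_\varphi)$ explicitly for $\varphi=c\,e^{-\frac12\langle Ax,x\rangle}$; both are trivial checks.
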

\par
\noindent
\textbf{Remark.}
 Inequality (\ref{StatementthmDragomir}) is  invariant under self adjoint SL(n) maps.
This follows as both, 
$D_f ( P_{\varphi} , Q_{\varphi}) $  and  $D_{f'} \left( \frac{ P_{\varphi}^2 }{Q_{\varphi} } , P_{\varphi} \right)$ are  invariant under self adjoint SL(n)maps, with possibly different degree of homogeneity.
For $D_f ( P_{\varphi} , Q_{\varphi}) $ this was proved  in \cite{CaglarWerner}.
For 
$D_{f'} \left( \frac{ P_{\varphi}^2 }{Q_{\varphi} } , P_{\varphi} \right)$, it is shown similarly.
\vskip 2mm
\noindent
\begin{proof}[Proof of Corollary  \ref{corDragomir}]
With the identity (\ref{transformation}) and the densities 
\begin{equation*}\label{Q,P}
q_\varphi=\varphi  \hskip 4mm \text{and} \hskip 4mm   p_\varphi= \varphi^{-1}  e^{\frac{\langle\grad\varphi, x \rangle}{\varphi}} \mbox{det} \left[ \nabla^2  \left(-\ln \varphi \right)\right],
\end{equation*}
the inequalities of the corollary follow immediately from Theorem \ref{thmDragomir}.
\par
\noindent
Let  $A$ be a positive definite $n \times n$ matrix, $c>0$ a constant and $\varphi(x) = c e^{- \frac{1}{2} \langle Ax, x \rangle}$. Then
\begin{equation*}\label{exponential2} 
D_f(P_\varphi, Q_\varphi ) \ = \ f \bigg( \frac{ \det (A)}{c^2} \bigg) \frac{ c (2 \pi )^{n/2}}{\sqrt{\det (A)}} .
\end{equation*}
Therefore it is easy to see that we have equality on the left hand side if  $\varphi (x) = c \ e^{- \frac{1}{2} {\langle A x, x \rangle}}$
and on the right hand side if  $\varphi (x) =  e^{- \frac{1}{2} {\langle A x, x \rangle}}$ where $A$ is an $n \times n$  positive definite  matrix with $\det (A) =1$.
\par
\noindent
By Theorem \ref{thmDragomir},  equality holds on the right inequality iff  $p_{\varphi} = q_{\varphi}$ a.e. and on the left iff  $p_{\varphi} =c\  q_{\varphi}$ a.e. where $c>0$ is a constant.
For functions $\varphi (x) = c\ e^{- \frac{1}{2} {\langle A x, x \rangle}}$, where $A$ is an $n \times n$  positive definite  matrix with $\det (A) =1$, it is easy to check that $p_{\varphi} = q_{\varphi}$.
The equation  $p_{\varphi} = c \ q_{\varphi}$ a.e., is equivalent to the equation
\begin{equation*}
\det(\nabla^2\psi(x))=c\, e^{-2\psi(x)+\langle \nabla \psi(x),x\rangle},\qquad  \mbox{a.e.}\;  x\in  {\R}^n .\label{eql2}
\end{equation*}
Then,  if $\varphi $ is $C^2$, Theorem \ref {thm01} and the remarks before it,   finish the proof of the corollary.
\end{proof}
\vskip 2mm
\noindent
Now we consider special cases of  Corollary \ref{corDragomir}.
\par
\noindent
If we let $f (t) = - \ln t$ in the previous corollary, we obtain the following affine invariant entropy inequalities which give upper and lower bounds for the
relative entropy in terms of the functional affine surface areas. The proof follows immediately from Corollary \ref{corDragomir}.
\begin{cor}
Let $ \varphi:\R^{n}\rightarrow [0, \infty)$ be a
log-concave
function. 
Then
\begin{eqnarray}\label{corrr2}
\ln \bigg( \frac{as_0 (\varphi)}{as_1 (\varphi)} \bigg)  as_0 (\varphi)  \ \leq D_{KL} (Q_{\varphi}||P_{\varphi}) \leq \ as_{-1} (\varphi) - as_0 (\varphi). 
\end{eqnarray}
\par
\noindent
Equality holds on the left hand side if  $\varphi (x) = c \ e^{- \frac{1}{2} {\langle A x, x \rangle}}$ where $A$ is an $n \times n$  positive definite  matrix and $c>0$ is an absolute constant 
and equality holds on the right hand side if  $\varphi (x) =  e^{- \frac{1}{2} {\langle A x, x \rangle}}$ where $A$ is an $n \times n$  positive definite  matrix with $\det (A) =1$, 
\par
\noindent
If $\varphi$ is in addition $C^2$, then equality holds on the left hand side iff  $\varphi (x) = c \ e^{- \frac{1}{2} {\langle A x, x \rangle}}$ where $A$ is an $n \times n$  positive definite  matrix and $c>0$ is an absolute constant. And equality holds on the right hand side iff  $\varphi (x) = e^{- \frac{1}{2} {\langle A x, x \rangle}}$ where $A$ is an $n \times n$  positive definite  matrix with $\det (A) =1$.
\end{cor}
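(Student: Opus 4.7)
The plan is to specialize Corollary \ref{corDragomir} to the convex function $f(t) = -\ln t$ and identify the three quantities that appear: $\int_{X_\psi} \varphi$, $\int_{X_{\psi^*}} \varphi^\circ$ and $\int q_\varphi^2 / p_\varphi \, dx$ as $as_0(\varphi)$, $as_1(\varphi)$ and $as_{-1}(\varphi)$ respectively. Since $f(t) = -\ln t$ is strictly convex on $(0,\infty)$ with $f(1)=0$ and $f'(t)=-1/t$, Corollary \ref{corDragomir} applies directly and the entire content of the statement is a symbol-matching exercise.

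First I would observe that for this choice of $f$
\[
D_f(P_\varphi, Q_\varphi) = \int -\ln\!\left(\frac{p_\varphi}{q_\varphi}\right) q_\varphi \, dx = \int \ln\!\left(\frac{q_\varphi}{p_\varphi}\right) q_\varphi \, dx = D_{KL}(Q_\varphi \| P_\varphi),
\]
matching the middle term of the inequality. By \eqref{volumephi} and \eqref{sublimit} we have $\int_{X_\psi} \varphi \, dx = as_0(\varphi)$ and $\int_{X_{\psi^*}} \varphi^\circ \, dx = as_1(\varphi)$, so the left-hand side of \eqref{StatementthmDragomir} becomes
\[
f\!\left(\frac{as_1(\varphi)}{as_0(\varphi)}\right) as_0(\varphi) = \ln\!\left(\frac{as_0(\varphi)}{as_1(\varphi)}\right) as_0(\varphi),
\]
which is exactly the left endpoint of \eqref{corrr2}.

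For the upper bound, I would unpack $D_{f'}(P^2/Q, P) - D_{f'}(P,Q)$ with $f'(t)=-1/t$. A short calculation gives
\[
D_{f'}\!\left(\tfrac{P_\varphi^2}{Q_\varphi}, P_\varphi\right) - D_{f'}(P_\varphi, Q_\varphi) = \int f'\!\left(\tfrac{p_\varphi}{q_\varphi}\right)(p_\varphi - q_\varphi)\, dx = \int \frac{q_\varphi^2}{p_\varphi}\, dx - \int q_\varphi\, dx.
\]
Substituting $q_\varphi = e^{-\psi}$ and $p_\varphi = e^\psi e^{-\langle \nabla\psi, x\rangle} \det \nabla^2 \psi$ yields
\[
\int \frac{q_\varphi^2}{p_\varphi}\, dx = \int e^{-3\psi + \langle x,\nabla\psi\rangle}\bigl(\det \nabla^2 \psi\bigr)^{-1} dx,
\]
which is precisely $as_{-1}(\varphi)$ by \eqref{asp-Logconcave} with $\lambda = -1$. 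Combined with $f(1)=0$, the right inequality in \eqref{StatementthmDragomir} reduces to $D_{KL}(Q_\varphi \| P_\varphi) \le as_{-1}(\varphi) - as_0(\varphi)$.

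The equality assertions are inherited verbatim from Corollary \ref{corDragomir}, since $f(t) = -\ln t$ is strictly convex and our substitutions are bijective. I do not anticipate a genuine obstacle here: the only minor bookkeeping to watch is the sign conventions when exchanging $D_f(P,Q)$ for $D_{KL}(Q\|P)$ and verifying that the $\lambda = -1$ surface area formula does produce the Radon--Nikodym quotient $q_\varphi^2 / p_\varphi$. Once those two identifications are checked, the corollary is an immediate transcription.
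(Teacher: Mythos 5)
Your proposal is correct and matches the paper's approach exactly: the paper itself states that this corollary ``follows immediately from Corollary \ref{corDragomir}'' with $f(t) = -\ln t$, and your identifications $\int_{X_\psi}\varphi = as_0(\varphi)$, $\int_{X_{\psi^*}}\varphi^\circ = as_1(\varphi)$, and $\int q_\varphi^2/p_\varphi\,dx = as_{-1}(\varphi)$ (via $\lambda=-1$ in \eqref{asp-Logconcave}) are precisely the substitutions that make this work. The equality cases transfer directly since $-\ln t$ is strictly convex, as you note.
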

 \par
\noindent
\textbf{Remarks.} 1. If, in the previous corollary,   $\varphi$ is a probability density, then the inequalities become
\begin{eqnarray*}
1-\ln \bigg( as_1 (\varphi) \bigg)    \ \leq  1+ D_{KL} (Q_{\varphi}||P_{\varphi}) \leq \ as_{-1} (\varphi) .
\end{eqnarray*}
\par
\noindent
2. Applying (\ref{concavity}), with $f(t)= \ln t$, to the left inequality of the previous  corollary, we get
\begin{eqnarray}\label{corrr23}
as_0 (\varphi)  - as_1 (\varphi)  \ \leq D_{KL} (Q_{\varphi}||P_{\varphi}) \leq \ as_{-1} (\varphi) - as_0 (\varphi). 
\end{eqnarray}
\vskip 2mm
\noindent
If we let $f(t) = t^{\lambda}$ in inequality (\ref{StatementthmDragomir}),  then we obtain  functional affine isoperimetric  inequalities. 
  \begin{cor}\label{aff3} 
Let $ \varphi:\R^{n}\rightarrow [0, \infty)$ be a log concave function.
\par
(i) If  $\lambda \geq 1$ or $\lambda \leq 0$, 
\begin{eqnarray}\label{results15}
  as_1^{\lambda} (\varphi)   \ as_0^{1 - \lambda} (\varphi) \  \leq  \  as_\lambda(\varphi)   \ \leq \   as_0 (\varphi)  + \lambda \left(   as_\lambda(\varphi)   -  as_{\lambda -1} (\varphi) \right).
\end{eqnarray}
\par
(ii) If  $\lambda \in (0,1)$, 
\begin{eqnarray}\label{results16}
  as_1^{\lambda} (\varphi)   \ as_0^{1 - \lambda} (\varphi) \  \geq  \  as_\lambda(\varphi)   \ \geq \   as_0 (\varphi)  + \lambda \left(   as_\lambda(\varphi)   -  as_{\lambda -1} (\varphi) \right).
\end{eqnarray}
\par
\noindent
Equality holds trivially if $\lambda =1$ or $ \lambda = 0$. 
\par
\noindent
Equality holds on the left hand sides if  $\varphi (x) = c \ e^{- \frac{1}{2} {\langle A x, x \rangle}}$ where $A$ is an $n \times n$  positive definite  matrix and $c>0$ is an absolute constant.
Equality holds on the right hand sides if  $\varphi (x) =  e^{- \frac{1}{2} {\langle A x, x \rangle}}$ where $A$ is an $n \times n$  positive definite  matrix with $\det (A) =1$.
\newline
If   $\varphi$ is in addition $C^2$, then equality holds on the left hand sides iff  $\varphi (x) = c \ e^{- \frac{1}{2} {\langle A x, x \rangle}}$ 
and equality holds on the right hand sides iff  $\varphi (x) = e^{- \frac{1}{2} {\langle A x, x \rangle}}$ 
with $\det (A) =1$.
\end{cor}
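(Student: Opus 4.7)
The plan is to deduce Corollary \ref{aff3} as a direct specialization of Corollary \ref{corDragomir} to the one-parameter family $f(t) = t^\lambda$. Since every term appearing in the general inequality (\ref{StatementthmDragomir}) of Corollary \ref{corDragomir} is itself an $f$-divergence of $\varphi$ (or of a ratio built from $p_\varphi$ and $q_\varphi$), all that needs to be done is to rewrite those divergences in terms of the functional affine surface areas $as_\lambda(\varphi)$ via the definition (\ref{asp-Logconcave}).

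Concretely, I will take $f(t) = t^\lambda$ and $f'(t) = \lambda t^{\lambda-1}$, and use that $p_\varphi/q_\varphi = \varphi^{-2} e^{\langle\nabla\varphi,x\rangle/\varphi} \det[\nabla^2(-\ln\varphi)]$, so that plugging into Definition \ref{defi2} gives $D_f(P_\varphi,Q_\varphi) = as_\lambda(\varphi)$. Similarly $f(1)\int\varphi\,dx = as_0(\varphi)$, and using $\int\varphi^\circ/\int\varphi = as_1(\varphi)/as_0(\varphi)$ together with (\ref{sublimit}) and (\ref{volumephi}) identifies the left-hand side term $f(I_P/I_Q)\,I_Q$ with $as_1^\lambda(\varphi)\,as_0^{1-\lambda}(\varphi)$. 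For the right-hand side, a direct computation using $p_\varphi\,dx$ as reference measure yields
\begin{equation*}
D_{f'}\!\left(\tfrac{P_\varphi^2}{Q_\varphi},P_\varphi\right) \;=\; \lambda\!\int_{X_\psi}\!\varphi\!\left(\tfrac{p_\varphi}{q_\varphi}\right)^{\!\lambda}\!dx \;=\; \lambda\,as_\lambda(\varphi),
\end{equation*}
and likewise $D_{f'}(P_\varphi,Q_\varphi) = \lambda\,as_{\lambda-1}(\varphi)$, so that their difference is $\lambda(as_\lambda(\varphi)-as_{\lambda-1}(\varphi))$.

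Finally I will split into cases according to the convexity of $t \mapsto t^\lambda$: it is convex for $\lambda\ge 1$ or $\lambda\le 0$, which yields (\ref{results15}) directly from Corollary \ref{corDragomir}, and concave for $\lambda\in(0,1)$, which reverses the inequalities and gives (\ref{results16}). The equality characterizations transfer verbatim from Corollary \ref{corDragomir}: Gaussians $c\,e^{-\frac12\langle Ax,x\rangle}$ saturate the left inequality, Gaussians with $\det(A)=1$ saturate the right inequality, and under the $C^2$ assumption these are the only equality cases. The trivial cases $\lambda=0$ and $\lambda=1$ reduce both sides to $as_0(\varphi)$ or $as_1(\varphi)$ respectively. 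There is essentially no obstacle here; the only thing to be mindful of is keeping track of the sign of $\lambda$ when moving $f'(1) = \lambda$ across the inequalities, but since Corollary \ref{corDragomir} is already stated with the correct convex/concave dichotomy, this bookkeeping is automatic.
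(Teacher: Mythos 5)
Your proposal is correct and follows exactly the route the paper takes: immediately before Corollary \ref{aff3} the paper says the result comes from setting $f(t)=t^\lambda$ in Corollary \ref{corDragomir}, and your identifications $D_f(P_\varphi,Q_\varphi)=as_\lambda(\varphi)$, $f(1)\int\varphi = as_0(\varphi)$, $f(I_P/I_Q)I_Q = as_1^\lambda(\varphi)as_0^{1-\lambda}(\varphi)$, $D_{f'}(P_\varphi,Q_\varphi)=\lambda\,as_{\lambda-1}(\varphi)$, and $D_{f'}(P_\varphi^2/Q_\varphi,P_\varphi)=\lambda\,as_\lambda(\varphi)$ all check out, as does the convex/concave dichotomy in $\lambda$. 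You merely spell out computations the paper leaves implicit, and the equality transfer from Corollary \ref{corDragomir} is handled correctly.
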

\vskip 2mm
\noindent
  \textbf{Remarks.}
1. Applying (\ref{concavity}) to the function  $f(t)= t^\lambda$  for $t=\frac{as_1^{\lambda} (\varphi) }{as_0^{ \lambda} (\varphi)}$ and $s=1$,  we get from the left inequality of the previous  corollary that 
\begin{eqnarray}\label{results11}
\lambda \left(   as_1 (\varphi) -  as_0 (\varphi) \right)  \  \leq  \  as_\lambda(\varphi)  - as_0 ( \varphi ) \ \leq \  \lambda \left(   as_\lambda(\varphi)   -  as_{\lambda -1} (\varphi) \right),
\end{eqnarray}
when $\lambda \geq 1$ or $\lambda \leq 0$ and that 
\begin{eqnarray}\label{results12}
\lambda \left(   as_1 (\varphi) -  as_0 (\varphi) \right)  \  \geq  \  as_\lambda(\varphi)  - as_0 ( \varphi ) \ \geq \  \lambda \left(   as_\lambda(\varphi)   -  as_{\lambda -1} (\varphi) \right),
\end{eqnarray}
when $\lambda \in (0,1)$. 
The equality cases are as in the corollary.
\par
\noindent 
2. The following inequalities for the difference of  functional affine surface areas follow immediately  from (\ref{results11}).
For $\lambda  \geq 1$,
\begin{eqnarray}
   as_1 (\varphi) -  as_0 (\varphi)  \  \leq \    as_\lambda(\varphi)   -  as_{\lambda -1} (\varphi). 
\end{eqnarray}
For $\lambda  \leq 1$, the inequality is reversed.
 \par
 \noindent
 If we let $\lambda = -1$ in inequality (\ref{results11}) and $\lambda =  1/2$ in Corollary \ref{aff3}, then 
\begin{equation*}
 as_{-1} (\varphi) \ \leq \ \frac{  as_0 (\varphi) + 
 as_{-2} (\varphi)  }{2}, \quad  
 as_{0} (\varphi) \ \leq \ \frac{  as_{-1} (\varphi) + 
 as_{1} (\varphi)  }{2}
 \end{equation*}
\begin{equation*}
 as_{0} (\varphi) \ \leq \ \frac{  as_{\frac{1}{2}} (\varphi) + 
 as_{-\frac{1}{2}} (\varphi)  }{2},
\quad
 as_{\frac{1}{2}}(\varphi) \ \leq \   \sqrt{as_{1} (\varphi)  \ 
 as_{0} (\varphi)}.
\end{equation*}
\par
\noindent
Similar results hold for dual function $\varphi^\circ$    by the  duality relation $as_\lambda(\varphi) =  as_{1 - \lambda} (\varphi^\circ)$, which was proved in \cite{CFGLSW}.
\vskip 3mm
\noindent
We have that 
$
 0< \int_{\R^n} \varphi < \infty
 $
by (\ref{positive}) and as $\varphi$ is integrable by assumption. 
Let $\lambda \in (0,1)$ and suppose that $\varphi$ is centered, i.e., $\int_{\R^n} x \varphi(x) dx =0$.  
The functional Blaschke Santal\`o inequality \cite{ArtKlarMil,  KBallthesis, FradeliziMeyer2007, Lehec2009}
says that  for a centered log concave function $\varphi$, 
$$
 \int_{\R^n}\varphi dx \cdot \int_{\R^n} \varphi^\circ  dx\leq (2\pi)^n , 
 $$
 with equality iff $\varphi (x) = C\ e^{- \frac{1}{2} {\langle A x, x \rangle}}$. For $\lambda \in (0,1)$.
We apply this inequality in the left  inequality (\ref{results16}). Also using that $ as_1 (\varphi) = \int_{X_{\psi^*}} \varphi^\circ$, we get
\begin{eqnarray*}  
 as_{\lambda} (\varphi) &\leq& \bigg( \int_{X_{\psi^*}} \varphi^\circ \bigg)^{\lambda}  \bigg( \int_{X_{\psi}} \varphi \bigg)^{1- \lambda}  \leq  \bigg( \int_{\R^n} \varphi^\circ \bigg)^{\lambda} \bigg( \int_{\R^n} \varphi \bigg)^{\lambda} \bigg( \int_{X_{\psi}} \varphi \bigg)^{1- 2\lambda} \\
 &\leq& \left( 2\pi \right)^{n \lambda}  \bigg( \int_{X_{\psi}} \varphi \bigg)^{1- 2\lambda}, 
\end{eqnarray*}
with equality iff $\varphi (x) = c\ e^{- \frac{1}{2} {\langle A x, x \rangle}}$.
\par
 \noindent
 Similarly,  by inequality (\ref{results15}),  we get for $\lambda < 0$,
 $$  
 as_{\lambda} (\varphi) \geq  \left( 2\pi \right)^{n \lambda}  \bigg( \int_{X_{\psi}} \varphi \bigg)^{1- 2\lambda}, 
$$
with equality iff $\varphi (x) = c\ e^{- \frac{1}{2} {\langle A x, x \rangle}}$.
\par
\noindent
A functional version of the inverse Blaschke Santal\`o inequality, due to Fradelizi and Meyer \cite{FM2008}
says that 
$ \int_{\R^n} \varphi   dx  \cdot \int_{\R^n}  \varphi^\circ  dx \geq c^n $, where $c>0$ is a constant.   We use this   for $\lambda >1$ in the left inequality (\ref{results15})
to get that 
$
 as_\lambda(\varphi) \ \geq \ c^{n \lambda}   \left( \int_{X_\psi} \varphi \right)^{ 1 - 2 \lambda}$. 
\vskip 3mm
\noindent
Thus we have proved the following corollary  which was proved in \cite{CFGLSW} by different methods.
\vskip 2mm
 \begin{cor}\label{AffineisopIneq} 
Let $ \varphi:\R^{n}\rightarrow [0, \infty)$ be a log concave  centered function. 
\par
(i)  If $\lambda \in [0,1]$, then 
$ as_\lambda(\varphi) \ \leq \  (2\pi)^{n\lambda} \left( \int_{X_\psi} \varphi \right)^{ 1 - 2 \lambda}$.
\par
(ii)  If  $\lambda  \leq 0$,   then
$ as_\lambda(\varphi) \ \geq \  (2\pi)^{n\lambda} \left( \int_{X_\psi} \varphi \right)^{ 1 - 2 \lambda}$.
\par
(iii) If $\lambda > 1$,  then
$ as_\lambda(\varphi) \ \geq \ c^{n \lambda}   \left( \int_{X_\psi} \varphi \right)^{ 1 - 2 \lambda}$. 
\par
\noindent
Equality holds trivially if  $ \lambda = 0$.

Equality holds in the first two inequalities iff $\varphi (x) = C \ e^{- \frac{1}{2} {\langle A x, x \rangle}}$ where $A$ is an $n \times n$  positive definite  matrix and $C>0$.
 \end{cor}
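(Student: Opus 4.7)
The plan is to deduce all three inequalities by combining the sandwich estimates of Corollary~\ref{aff3} with the functional Blaschke--Santal\'o inequality (for (i) and (ii)) and its Fradelizi--Meyer reverse form (for (iii)). The key algebraic move is to factor the common expression appearing on the left in (\ref{results15}) and (\ref{results16}) as
$$
as_0(\varphi)^{1-\lambda}\, as_1(\varphi)^{\lambda} = \bigl(as_0(\varphi)\, as_1(\varphi)\bigr)^\lambda \cdot as_0(\varphi)^{1-2\lambda},
$$
which isolates the Blaschke--Santal\'o product from the residual factor $as_0(\varphi)^{1-2\lambda} = \bigl(\int_{X_\psi}\varphi\bigr)^{1-2\lambda}$ that appears unchanged in the conclusion. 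Using the identifications (\ref{volumephi}) and (\ref{sublimit}), the product $as_0(\varphi)\, as_1(\varphi)$ equals $\int_{X_\psi}\varphi\cdot\int_{X_{\psi^*}}\varphi^\circ$ and is dominated by $\int_{\R^n}\varphi\cdot\int_{\R^n}\varphi^\circ$, to which the functional Santal\'o type inequalities apply.

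First I would treat case (i), $\lambda\in[0,1]$: the endpoints $\lambda=0,1$ are respectively trivial and are precisely the functional Blaschke--Santal\'o inequality, and for $\lambda\in(0,1)$ I chain the upper bound $as_\lambda(\varphi)\leq as_1^\lambda(\varphi)\, as_0^{1-\lambda}(\varphi)$ from (\ref{results16}) with the functional Blaschke--Santal\'o bound $\int_{\R^n}\varphi\cdot\int_{\R^n}\varphi^\circ \leq (2\pi)^n$ for centered $\varphi$, raised to the nonnegative power $\lambda$. Next for case (ii), $\lambda\leq 0$: I would start from the reverse estimate $as_\lambda(\varphi)\geq as_1^\lambda(\varphi)\, as_0^{1-\lambda}(\varphi)$ in (\ref{results15}); raising the same Blaschke--Santal\'o product to the nonpositive power $\lambda$ flips the inequality direction, which yields the claimed lower bound. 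For case (iii), $\lambda > 1$: I would again invoke (\ref{results15}), but replace Blaschke--Santal\'o by the Fradelizi--Meyer reverse functional Santal\'o inequality $\int\varphi\cdot\int\varphi^\circ\geq c^n$, raised to the positive power $\lambda$.

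The main point to keep track of is the sign of the exponent $1-2\lambda$ on the residual factor $\int_{X_\psi}\varphi$, which varies with $\lambda$ across the three regimes; this is precisely why one must isolate the Blaschke--Santal\'o product \emph{before} inserting the inequalities $as_0(\varphi)\leq\int_{\R^n}\varphi$ and $as_1(\varphi)\leq\int_{\R^n}\varphi^\circ$, so that the factor $\bigl(\int_{X_\psi}\varphi\bigr)^{1-2\lambda}$ surviving to the final statement retains its exact form. The equality characterizations are then inherited from the equality cases of Corollary~\ref{aff3} together with the Gaussian equality case of Blaschke--Santal\'o, both of which pin down $\varphi(x) = C\, e^{-\frac{1}{2}\langle Ax,x\rangle}$.
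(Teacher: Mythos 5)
Your proposal is correct and matches the paper's own argument essentially step for step: both proofs start from the sandwich inequalities of Corollary~\ref{aff3} (the left bounds of (\ref{results15}) and (\ref{results16})), rewrite $as_1^\lambda\, as_0^{1-\lambda}$ as $\bigl(as_0\,as_1\bigr)^\lambda\, as_0^{1-2\lambda}$, extend the integration domains to $\R^n$ (with the sign of $\lambda$ governing which direction the inequality moves), and then invoke the functional Blaschke--Santal\'o inequality for (i)--(ii) and the Fradelizi--Meyer reverse Santal\'o for (iii), with the equality case inherited from Blaschke--Santal\'o and Corollary~\ref{aff3}.
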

 \noindent
If $\lambda =1$, the first equality is just the functional Blaschke Santal\`o inequality.
 \vskip 2mm
 \noindent
Another consequence of Corollary \ref{aff3}, along with the duality relation $as_\lambda(\varphi) =  as_{1 - \lambda} (\varphi^\circ)$ (proved in \cite{CFGLSW}),  is the following 
 (functional) Blaschke Santal\'o  type inequalities, which were originally proved in \cite{CFGLSW} by different methods.  
 \begin{cor}\label{FBS}
Let $ \varphi:\R^{n}\rightarrow [0, \infty)$ be a log concave function.   
\par
If $\lambda \in [0,1]$ and $\varphi $ is centered, then 
$as_{\lambda} (\varphi) \ as_{\lambda} (\varphi^\circ) \ \leq  \ (2 \pi)^n $.
\par
If  $\lambda \geq 1$ or $\lambda \leq 0$, then
$
as_{\lambda} (\varphi) \ as_{\lambda} (\varphi^\circ)  \geq \int_{X_{\psi}} \varphi \int_{X_{{\psi^*}}} \varphi^\circ$. 
\par
\noindent
 Equality holds in the first  inequality iff $\varphi (x) = C \ e^{- \frac{1}{2} {\langle A x, x \rangle}}$ where $A$ is an $n \times n$  positive definite  matrix and $C>0$. 
If $\varphi$ is in addition $C^2$, then equality holds in the second inequality iff $\varphi (x) = C \ e^{- \frac{1}{2} {\langle A x, x \rangle}}$ where $A$ is an $n \times n$  positive definite  matrix and $C>0$.
 \end{cor}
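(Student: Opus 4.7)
The plan is to derive both Blaschke--Santaló type inequalities from Corollary \ref{aff3} via the duality relation $as_\lambda(\varphi)=as_{1-\lambda}(\varphi^\circ)$ of \cite{CFGLSW}. Applying this duality to $\varphi^\circ$ in place of $\varphi$ (and using $\varphi^{\circ\circ}=\varphi$, which holds for lower semi-continuous log concave $\varphi$), one gets $as_\lambda(\varphi^\circ)=as_{1-\lambda}(\varphi)$, so the product we must control equals $as_\lambda(\varphi)\cdot as_{1-\lambda}(\varphi)$. The game will then be to split this product using the left-hand inequality of Corollary \ref{aff3}, which compares $as_\lambda(\varphi)$ to the geometric average $as_1^\lambda(\varphi)\,as_0^{1-\lambda}(\varphi)$.

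For the first inequality, when $\lambda\in(0,1)$, both $\lambda$ and $1-\lambda$ lie in $(0,1)$, so Corollary \ref{aff3}(ii) gives $as_\lambda(\varphi)\le as_1^\lambda(\varphi)\,as_0^{1-\lambda}(\varphi)$ and $as_{1-\lambda}(\varphi)\le as_1^{1-\lambda}(\varphi)\,as_0^{\lambda}(\varphi)$. Multiplying these two bounds collapses the $\lambda$-dependence and yields
\[
as_\lambda(\varphi)\cdot as_{1-\lambda}(\varphi)\ \le\ as_0(\varphi)\,as_1(\varphi)\ =\ \int_{X_\psi}\varphi\cdot\int_{X_{\psi^*}}\varphi^\circ,
\]
using (\ref{volumephi}) and (\ref{sublimit}) for the identification of $as_0$ and $as_1$. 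Since $\varphi$ is centered, the functional Blaschke--Santaló inequality bounds the right-hand side by $(2\pi)^n$, and the endpoint cases $\lambda=0,1$ are direct. For the second inequality, when $\lambda\ge1$ (so $1-\lambda\le0$) or $\lambda\le0$ (so $1-\lambda\ge1$), both $\lambda$ and $1-\lambda$ fall in the regime of Corollary \ref{aff3}(i), whose left inequality is reversed; the same multiplication trick then gives
\[
as_\lambda(\varphi)\cdot as_{1-\lambda}(\varphi)\ \ge\ as_0(\varphi)\,as_1(\varphi),
\]
which is exactly the claimed lower bound after applying the duality identity once more.

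For the equality cases, the first inequality saturates only when equality holds in both Corollary \ref{aff3}(ii) and in Blaschke--Santaló; both force $\varphi(x)=Ce^{-\tfrac12\langle Ax,x\rangle}$. A direct computation confirms that such Gaussians do attain equality on both sides, so this characterization is sharp. For the second inequality, equality requires equality in Corollary \ref{aff3}(i) at both exponents $\lambda$ and $1-\lambda$; under the $C^2$ assumption this is precisely the Gaussian characterization stated. The main point requiring some care is verifying that $\varphi^{\circ\circ}=\varphi$ and that the duality $as_\lambda(\varphi)=as_{1-\lambda}(\varphi^\circ)$ of \cite{CFGLSW} may be invoked with no domain subtleties at the endpoints $\lambda=0,1$; beyond that, the argument is essentially bookkeeping on top of Corollary \ref{aff3} and the functional Blaschke--Santaló inequality.
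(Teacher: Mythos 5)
Your proposal is correct and follows essentially the same route as the paper: apply the left-hand inequality of Corollary \ref{aff3} at $\lambda$ and (via the duality $as_\lambda(\varphi)=as_{1-\lambda}(\varphi^\circ)$) at $1-\lambda$, multiply so the $\lambda$-dependence cancels to leave $as_0(\varphi)\,as_1(\varphi)=\int_{X_\psi}\varphi\int_{X_{\psi^*}}\varphi^\circ$, and then for $\lambda\in[0,1]$ apply the functional Blaschke--Santaló inequality, with equality forced by the Blaschke--Santaló equality characterization (and, under the $C^2$ hypothesis, the Corollary \ref{aff3} equality case in the opposite regime). The caveat you raise about $\varphi^{\circ\circ}=\varphi$ is standard for (upper semicontinuous) log-concave functions and is implicit in the paper as well.
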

 \vskip 2mm
 \noindent
Note that if $\varphi $ is $C^2$, then $X_{\psi} = \Omega_{\psi}$ and $X_{\psi^*} = \Omega_{\psi^*}$. Therefore, when  $\varphi \in C^2$, 
  we have
\begin{eqnarray*}
as_{\lambda} (\varphi) \ as_{\lambda} (\varphi^\circ)  \geq c^n ,
\end{eqnarray*}
for $\lambda \geq 1$ or $\lambda \leq 0$, which follows by the inverse functional  Blaschke Santal\'o inequality.

 \vskip 2mm
 \begin{proof}[Proof of Corollary \ref{FBS}]
 For  $\lambda \in [0,1]$,  Corollary \ref{aff3},  the duality relation $ as_\lambda(\varphi) =  as_{1 - \lambda} (\varphi^\circ)$,  and  the functional  Blaschke Santal\'o inequality yield
\begin{eqnarray*}
as_{\lambda} (\varphi) \ as_{\lambda} (\varphi^\circ)  \leq \int_{X_{\psi}} \varphi \int_{X_{{\psi^*}}} \varphi^\circ \leq  \int_{\R^n} \varphi \int_{\R^n} \varphi^\circ \leq (2 \pi)^n . 
 \end{eqnarray*}
 The second inequality is trivial for  $\lambda \geq 1$ or $\lambda \leq 0$ by Corollary \ref{aff3}, along with the duality relation $ as_\lambda(\varphi) =  as_{1 - \lambda} (\varphi^\circ)$.
 \par
 \noindent
 Equality holds in the first  inequality iff $\varphi (x) = C \ e^{- \frac{1}{2} {\langle A x, x \rangle}}$, where $A$ is an $n \times n$  positive definite  matrix and $C>0$. This follows from the equality characterization of the  functional  Blaschke Santal\'o inequality.  
If $\varphi$ is in addition $C^2$, then equality holds in the second inequality iff $\varphi (x) = C \ e^{- \frac{1}{2} {\langle A x, x \rangle}}$,  where $A$ is an $n \times n$  positive definite  matrix and $C>0$. 
 \end{proof}

\section{Applications}\label{Applications}

In this section we will derive applications to convex bodies. We first recall the notion 
of $f$-divergence for convex bodies. For general information on convex bodies  the books \cite{GardnerBook, SchneiderBook} are excellent sources.
\par
\noindent
 In \cite{Werner2012}, $f$-divergence and their inequalities were introduced for convex bodies.
For details  and special cases we refer to \cite{Werner2012} and give here only the definition. 
\par
\noindent
Let $K$ be a convex body in $\mathbb{R}^n$. We assume throughout that $K$  has center of gravity at $0$.  For $x \in \partial K$, 
the boundary of a sufficiently smooth  convex body $K$, let  $N_K(x)$ denote the outer unit normal to $\partial K$ in $x$ and let $\kappa_K(x)$ be the Gauss curvature in $x$, and  $\mu_K$ is the usual surface area measure on $\partial K$.  We put
\begin{equation}\label{densities}
p_K(x)=  \frac{\kappa_{K}(x)}{\langle x, N_K(x)\rangle^{n}} \, , \   \ q_K(x)=  \langle x, N_{K}(x) \rangle
\end{equation}
\noindent 
and 
\begin{equation}\label{PQ}
P_K=p_K\  \mu_K \ \ \ \text{and}   \ \ \   Q_K=q_K \ \mu_K.
\end{equation}
Then $P_K$ and $Q_K$  are   measures on $\partial K$ that are absolutely continuous with respect  to $\mu_{K}$. 
Note that 
\begin{equation} \label{volumeK}
\int _{\partial K}  q_K d\mu = n |K| \hskip 5mm \text{and} \hskip 5mm \int _{\partial K}  p_K d\mu = n |K^\circ |.
\end{equation}
The latter holds, provided $K$ has sufficiently smooth boundary.
Thus $Q_K$ and $P_K$  are (up to the factor $n$) the {\em cone measures} (e.g.,  \cite{PaourisWerner2011})  of $K$ and its polar $K^\circ$. 
\par
\noindent
Note that $ \int _{\partial K}  p_K d\mu = n |K^\circ |$ and  $ \int _{\partial K}  q_K d\mu = n |K|.$ 
\vskip 2mm
\par
\noindent
Let $f: (0, \infty) \rightarrow \mathbb{R}$ be a convex or concave function.
 The 
$f$-divergence of $K$ with respect to the measures $P_K$ and $Q_K$ was defined in \cite{Werner2012} as 
\begin{eqnarray}\label{f-div1,0}
D_f(P_K, Q_K)
= \int _{\partial K} f\left( \frac{  \kappa_K(x)} {\langle x, N_K(x) \rangle ^{n+1}}\right)  \langle x, N_K(x) \rangle 
d\mu_K.
\end{eqnarray}
It is a natural generalization of $L_p$-affine surface area and measures the difference between the cone measures of $K$ and $K^\circ$.
Those are relevant in many contexts, 
e.g.,  \cite{HorrmannProchnoThale, Naor} as well as e.g., 
the famous Blaschke Santal\'o inequality and its still open converse, the Mahler conjecture. 
\vskip 2mm
\noindent
Now we turn to applications to convex bodies  of the inequalities we have obtained in the previous sections. There are two equivalent approaches. The first one is to apply the density functions (\ref{densities}) to  Theorem \ref{Gilardoni},  Theorem \ref{thmDragomirOriginal} and Theorem \ref{thmDragomir}. Or, we can apply the log-concave function $ \varphi_K= \exp \left( -\frac{\|\cdot\|_K^2}{2} \right) $ to the  inequalities of the previous sections for log-concave functions.  
Here  
  $\| . \|_K$ is the gauge function of $K$,
\begin{eqnarray*}
\|x\|_K = \min\{\lambda \geq 0: \lambda x \in K\} = \max_{y \in K^\circ}\langle x, y \rangle = h_{K^\circ}(x).
\end{eqnarray*}
Differentiating with respect to $\lam$ at $\lam=1$, we get  
\begin{equation}\label{grad=psi}
\langle x, \nabla \psi(x) \rangle = 2 \psi(x). 
\end{equation} 
 It was already observed in \cite{CFGLSW}
that the $L_\lambda$-affine surface area for log concave functions is a generalization of $L_p$-affine surface area for convex bodies. Indeed, it was noted there that  if one applies  the log concave function $ \varphi_K= \exp \left( -\frac{\|\cdot\|_K^2}{2} \right) $ to  Definition (\ref{asp-Logconcave}), then one obtains the $L_p$-affine surface area for convex bodies, 
\begin{eqnarray}
as_{\lambda}(\varphi_K) = \frac{(2\pi)^{n/2}}{n |B^n_2|} as_p (K),
\end{eqnarray}
where $\lambda = \frac{p}{n+p}$, $p \ne -n$ and  $B^n_2$ denotes the $n$-dimensional Euclidean unit ball. Please note also that 
\begin{equation}\label{integralphi}
 \int e^{ -\frac{\| x \|_K^2}{2} } \ dx = \frac{(2 \pi)^{ \frac{n}{2}} |K|}{|B^n_2|}   \ \ \text{and} \ \  \int e^{ -\frac{\| x \|_{K^\circ}^2}{2} } \ dx = \frac{(2 \pi)^{ \frac{n}{2}} |K^\circ |}{|B^n_2|} .
\end{equation}
\vskip 2mm
\noindent
Now we apply the function,  $ \varphi_K= \exp \left( -\frac{\|\cdot\|_K^2}{2} \right)$, to Corollary \ref{corDragomir} (or apply the densities (\ref{densities}) to Theorem \ref{thmDragomir}) and  obtain the following result. $P_K$, $Q_K$ and  $D_{f}( P_K, Q_K) $ are as above.
\vskip 2mm
\noindent
\begin{theo}\label{thmDragomirBodies}
Let $K$ be a convex body in $\mathbb{R}^n$ with $0$ in its interior. Let  $f:( 0, \infty) \rightarrow  \mathbb{R}$ be  convex and differentiable function, then
\begin{eqnarray}\label{thmDragomirBodiesINEQ}
n |K|  f \left( \frac{|K^\circ |}{|K|}\right)    \ \leq  \ D_f( P_K, Q_K) \
\leq \  n f(1) |K|  +  D_{f'} \left( \frac{ P_{K}^2 }{Q_{K} } , P_{K} \right) - D_{f'}( P_K, Q_K) .
 \end{eqnarray}
\par
\noindent
For a concave, differentiable $f$, the inequalities are reversed. 
\par
\noindent
Equality holds on the left hand side if $K$ is an ellipsoid.  If K is  $C^{2}_+$, then equality holds on the left hand side iff $K$ is an ellipsoid. 
\par
\noindent
Equality  holds  on the right hand side if  $K$ is an origin symmetric ellipsoid
such that $|K| = |B^n_2|$.  If K is $C^{2}_+$, then equality  holds  on the right hand side iff  $K$ is an origin symmetric ellipsoid
such that $|K| = |B^n_2|$.
\end{theo}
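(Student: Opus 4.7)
The plan is to deduce Theorem \ref{thmDragomirBodies} as a direct specialization of Theorem \ref{thmDragomir} to the measure space $(\partial K,\mu_K)$ with the densities $p=p_K$ and $q=q_K$ from (\ref{densities}); equivalently, one can obtain it by evaluating Corollary \ref{corDragomir} on the log-concave function $\varphi_K(x)=\exp(-\|x\|_K^2/2)$ and using the dictionary (\ref{integralphi}).

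First, I verify that $p_K$ and $q_K$ are $\mu_K$-a.e.\ positive on $\partial K$: $\langle x,N_K(x)\rangle>0$ since $0\in\mathrm{int}(K)$, and $\kappa_K>0$ $\mu_K$-a.e.\ by Alexandrov's theorem. The identities (\ref{volumeK}) give $I_Q=\int_{\partial K}q_K\,d\mu_K=n|K|$ and $I_P=\int_{\partial K}p_K\,d\mu_K=n|K^\circ|$, so $I_P/I_Q=|K^\circ|/|K|$. Substituting into the two-sided estimate (\ref{generalDragomir}) of Theorem \ref{thmDragomir} immediately produces (\ref{thmDragomirBodiesINEQ}); reversing inequalities gives the concave case.

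Second, I handle the equality characterization. For the ``if'' direction, observe that for an origin-centered ellipsoid $K=\{x:\langle Ax,x\rangle\le 1\}$ a direct calculation (using $N_K(x)=Ax/\|Ax\|$, $\langle x,N_K(x)\rangle=1/\|Ax\|$, and $\kappa_K(x)=\det A/\|Ax\|^{n+1}$) shows $p_K/q_K=\det A$ on $\partial K$; this makes $p_K$ proportional to $q_K$, which is precisely the case of equality in the Jensen step used in the proof of Theorem \ref{thmDragomir}, so the left inequality becomes an equality. Imposing $\det A=1$ (equivalent to $|K|=|B_2^n|$) gives $p_K=q_K$, which is the case of equality in the gradient inequality (\ref{concavity}) used for the right bound, so the right inequality becomes an equality as well. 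For the ``only if'' direction in the $C^2_+$ case, I transfer the problem to log-concave functions via $\varphi_K$: the equality characterization of Corollary \ref{corDragomir} forces $\varphi_K(x)=c\,e^{-\frac{1}{2}\langle Ax,x\rangle}$, and $\varphi_K(0)=1$ gives $c=1$, hence $\|x\|_K^2=\langle Ax,x\rangle$ and $K$ is an origin-symmetric ellipsoid; right equality additionally requires $\det A=1$, i.e.\ $|K|=|B_2^n|$ by (\ref{integralphi}).

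The main obstacle is not analytic but organizational: carefully matching the normalizing constants between the $\partial K$-picture and the $\varphi_K$-picture, and confirming that the regularity required by Corollary \ref{corDragomir} (namely $\varphi\in C^2$ together with strict positivity of the Hessian) is supplied by the hypothesis $K\in C^2_+$. The $C^2_+$ strict converse ultimately rests on Theorem \ref{thm01} (the Monge--Amp\`ere uniqueness for (\ref{MA002})), which is what rules out non-Gaussian equalizers in Corollary \ref{corDragomir} and thus non-ellipsoidal equalizers here.
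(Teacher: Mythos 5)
Your deduction of the two-sided inequality from Theorem \ref{thmDragomir} with $p=p_K$, $q=q_K$ on $(\partial K,\mu_K)$ is correct and is the more direct of the two equivalent routes the paper mentions; the paper itself prefers the transfer through $\varphi_K$ and the scaling identity $D_f(P_{\varphi_K},Q_{\varphi_K})=\frac{(2\pi)^{n/2}}{n|B_2^n|}D_f(P_K,Q_K)$. The ``if'' direction of the equality characterization (ellipsoid $\Rightarrow p_K/q_K=\det A$ constant, and $\det A=1\Leftrightarrow |K|=|B_2^n|$) is also fine.

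There is, however, a genuine gap in your ``only if'' argument. You propose to invoke the $C^2$-case equality characterization of Corollary \ref{corDragomir} applied to $\varphi_K=\exp(-\|\cdot\|_K^2/2)$, and you claim that $K\in C^2_+$ supplies the hypothesis $\varphi_K\in C^2$. This is false at the origin: $\psi_K(x)=\tfrac12\|x\|_K^2$ has a $0$-homogeneous Hessian on $\R^n\setminus\{0\}$, so $\nabla^2\psi_K$ has a limit at $0$ (and hence $\psi_K$ is twice differentiable there) precisely when that Hessian is constant along all rays, i.e.\ precisely when $K$ is an ellipsoid. So for a non-ellipsoidal $K$ the hypothesis needed to run Corollary \ref{corDragomir} (and hence Theorem \ref{thm01}) fails, and you cannot conclude from ``equality holds'' that $K$ is an ellipsoid by this route --- the argument is circular. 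The paper avoids this entirely: having reduced equality to $p_K=cq_K$ $\mu_K$-a.e., which reads $\kappa_K(z)/\langle z,N_K(z)\rangle^{n+1}=c$ on $\partial K$, it invokes Petty's theorem characterizing $C^2_+$ bodies with this property as exactly the ellipsoids, and never touches the Monge--Amp\`ere uniqueness of Theorem \ref{thm01}. To repair your proof, replace the appeal to Corollary \ref{corDragomir} in the ``only if'' step by this Petty argument, which is both simpler and applicable.

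A secondary, minor issue: the identity $\int_{\partial K}p_K\,d\mu_K=n|K^\circ|$ used to compute $I_P$ requires sufficient boundary smoothness, as the paper notes after (\ref{volumeK}); for the inequality itself this is harmless (the integral is well-defined and the Jensen step still goes through with whatever $I_P$ is), but you should flag where smoothness enters.
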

\begin{proof}
Let  $\psi=\frac{\|\cdot\|_K^2}{2}$. In \cite{CaglarWerner}, it was proved that 
\begin{eqnarray*}
D_f(P_{\varphi_K}, Q_{\varphi_K}) =  \frac{(2\pi)^\frac{n}{2}}{n|B_2^n|} D_f(P_K, Q_K ).
\end{eqnarray*} 
Clearly,
\begin{eqnarray*}
D_{f'}(P_{\varphi_K}, Q_{\varphi_K}) =  \frac{(2\pi)^\frac{n}{2}}{n|B_2^n|} D_{f'}(P_K, Q_K ).
\end{eqnarray*} 
We integrate in polar coordinates with respect to the cone measure  $Q_K$  (\ref{PQ}) of  $K$. Thus, if we write $x=r z$, with $z \in\partial K$, then $dx=r^{n-1}dr dQ_K(z)$. We also use that  the map  $x\mapsto\det \, {\nabla^2 \psi (x)}$ is $0$-homogeneous. With (\ref{grad=psi}), 
\begin{eqnarray*}
 D_{f'} \left( \frac{ P_{\varphi_K}^2 }{Q_{\varphi_K} } , P_{\varphi_K} \right) &=& \int_0^{+\infty}r^{n-1}e^\frac{-r^2}{2}dr \int_{\partial K} f' \left(\det \, {\nabla^2 \, \psi (z)}\right) \  \det \, {\nabla^2 \, \psi (z)}\ dQ_K(z)\\
 &=& \frac{(2\pi)^\frac{n}{2}}{n |B_2^n|}\int_{\partial K}  f' \left(\det \, {\nabla^2 \, \psi (z)}\right) \ \det \, {\nabla^2 \, \psi (z)} \ dQ_K(z).
\end{eqnarray*}
It is well known (see, e.g.,  \cite{CFGLSW} ) that  for all $z\in\partial K$, 
\begin{equation}\label{hesse}
\det  \,( \nabla^2 \psi ) = \frac{ \kappa_K (z) }{\langle z,  N_K(z)\rangle ^{n+1} }.
\end{equation}
 Thus, 
\begin{eqnarray*}
 D_{f'} \left( \frac{ P_{\varphi_K}^2 }{Q_{\varphi_K} } , P_{\varphi_K} \right)&=& \frac{(2\pi)^\frac{n}{2}}{n|B_2^n|}\int_{\partial K}  f' \left(\frac{ \kappa (x) }{ \langle x,N_K(x)\rangle^{n+1} }\right) \frac{ \kappa (x) }{ \langle x,N_K(x)\rangle^{n} } d\mu_K(x)\\
&=& \frac{(2\pi)^\frac{n}{2}}{n|B_2^n|} D_{f'} \left( \frac{ P_{K}^2 }{Q_{K} } , P_{K} \right).
\end{eqnarray*}
Therefore, the statement of the theorem follows.
\par
\noindent
For ellipsoids, $\ D_f( P_K, Q_K) = n |K| f \left( \frac{|K^\circ |}{|K|}\right)  $ and $\ D_{f'} ( P_K, Q_K) = n  |K| f' \left( \frac{|K^\circ |}{|K|}\right)   $ (\cite{Werner2012}).  And note that (see \cite{Werner2012}) 
$$D_{f'} \left( \frac{ P_{K}^2 }{Q_{K} } , P_{K} \right) = \int f'  \left( \frac{p_K}{q_K} \right) p_K \ d \mu_K  = \int f' \left( \frac{|K^\circ |}{|K|}\right)   p_K d\mu_K = f' \left( \frac{|K^\circ |}{|K|}\right)   n | K^\circ| .$$ 
So, equality holds on the left hand side if $K$ is an ellipsoid.  If K is  $C^{2}_+$, then equality holds on the left hand side iff $p_K = c \ q_K$. This  holds iff $K$ is an ellipsoid by a theorem, due to Petty \cite{Petty}, which says that a $C_{+}^2$ convex body $K$  is an ellipsoid iff 
$$ \frac{\kappa_{K}(z)}{\langle z, N_K(z)\rangle^{n+1}} = c , $$
where $c>0$ a constant. 
\par
\noindent
If $K = \E$  an origin symmetric ellipsoid such that $|\E| = |B^n_2|$, then, by the  Blaschke-Santal\'o  inequality, $|K^\circ| = | \E^\circ| = |B^n_2|$. 
Therefore, equality  holds  on the right hand side if  $K$ is an origin symmetric ellipsoid
such that $|K| = |B^n_2|$.  If K is  $C^{2}_+$, then equality  holds  on the right hand side iff   $p_K =  q_K$, which only holds when
$K$ is an origin symmetric ellipsoid
such that $|K| = |B^n_2|$.
\end{proof}
\vskip 2mm
\noindent
Now we will consider special cases.
\par
\noindent
If we let $f(t) = - \ln t$ in inequality (\ref{thmDragomirBodiesINEQ}),  then by definitions (\ref{def:p-affine}) and  (\ref{K-L-divergence}),  we obtain  the following affine isoperimetric  inequalities.   
\begin{cor}
Let $K$ be a convex body in $\mathbb{R}^n$ with $0$ in its interior. Then
\begin{eqnarray*}
 n |K| \ln \left(\frac{|K|}{|K^\circ|} \right) \ \leq D_{KL} \left( Q_{K} || P_{K}\right) \ \leq  as_{-\frac{n}{2}} (K) - as_0 (K).
\end{eqnarray*}
\par
\noindent
Equality holds on the left hand side if $K$ is an ellipsoid.  If K is  $C^{2}_+$, then equality holds on the left hand side iff $K$ is an ellipsoid. 
\par
\noindent
Equality  holds  on the right hand side if  $K$ is an origin symmetric ellipsoid
such that $|K| = |B^n_2|$.  If K is  $C^{2}_+$, then equality  holds  on the right hand side iff  $K$ is an origin symmetric ellipsoid
such that $|K| = |B^n_2|$.
\end{cor}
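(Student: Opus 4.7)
The plan is to specialize Theorem~\ref{thmDragomirBodies} to the strictly concave, differentiable function $f(t)=-\ln t$, and then recognize each of the three resulting quantities as either a Kullback--Leibler divergence or a standard $L_p$-affine surface area via (\ref{def:p-affine}) and (\ref{K-L-divergence}). Because $f$ is concave, the inequalities in (\ref{thmDragomirBodiesINEQ}) reverse, producing
$$
n|K|\,\bigl(-\ln\bigr)\!\left(\frac{|K^\circ|}{|K|}\right) \ \le \ D_f(P_K,Q_K) \ \le \ n f(1)|K| + D_{f'}\!\left(\tfrac{P_K^2}{Q_K},P_K\right) - D_{f'}(P_K,Q_K),
$$
and the left-hand side is already $n|K|\ln(|K|/|K^\circ|)$, as required.

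The next step is to identify the middle quantity. By the definition (\ref{f-div1,0}) and the fact that $p_K,q_K$ are the densities of $P_K,Q_K$ with respect to $\mu_K$,
$$
D_f(P_K,Q_K)=\int_{\partial K} -\ln\!\frac{p_K}{q_K}\, q_K\, d\mu_K=\int_{\partial K} q_K\ln\!\frac{q_K}{p_K}\, d\mu_K=D_{KL}(Q_K\|P_K),
$$
matching (\ref{K-L-divergence}). For the upper bound I use $f(1)=0$ and $f'(t)=-1/t$, which give
$$
D_{f'}(P_K,Q_K)=-\int_{\partial K}\frac{q_K^2}{p_K}\, d\mu_K,\qquad D_{f'}\!\left(\tfrac{P_K^2}{Q_K},P_K\right)=-\int_{\partial K} q_K\, d\mu_K=-n|K|,
$$
so that the right-hand side of (\ref{thmDragomirBodiesINEQ}) collapses to $\int_{\partial K} q_K^2/p_K\, d\mu_K - n|K|$.

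The final step is to translate these two integrals into $L_p$-affine surface areas. Substituting $p_K=\kappa_K/\langle x,N_K\rangle^n$ and $q_K=\langle x,N_K\rangle$ gives $q_K^2/p_K=\langle x,N_K\rangle^{n+2}/\kappa_K$. Setting $p=-n/2$ in (\ref{def:p-affine}) yields the exponents $p/(n+p)=-1$ and $n(p-1)/(n+p)=-(n+2)$, so $\int_{\partial K} q_K^2/p_K\, d\mu_K=as_{-n/2}(K)$; meanwhile $as_0(K)=\int_{\partial K}\langle x,N_K\rangle\, d\mu_K=n|K|$. Hence the upper bound becomes $as_{-n/2}(K)-as_0(K)$, closing the chain.

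The equality statements transfer directly from Theorem~\ref{thmDragomirBodies} since $f(t)=-\ln t$ is strictly concave and smooth on $(0,\infty)$: equality on the left characterizes ellipsoids (under $C^2_+$), and equality on the right characterizes origin-symmetric ellipsoids with $|K|=|B^n_2|$. There is no real obstacle here; the corollary is a routine specialization, and the only bookkeeping issue is correctly identifying the exponent $p=-n/2$ in (\ref{def:p-affine}) that produces the integrand $\langle x,N_K\rangle^{n+2}/\kappa_K$.
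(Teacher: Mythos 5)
Your approach is exactly the paper's: specialize Theorem~\ref{thmDragomirBodies} to $f(t)=-\ln t$ and then translate the three quantities via (\ref{def:p-affine}), (\ref{K-L-divergence}) and (\ref{f-div1,0}). The computations of the two $D_{f'}$ terms and the identification $p=-n/2$ from the exponents $p/(n+p)=-1$, $n(p-1)/(n+p)=-(n+2)$ are all correct, as is the equality analysis.

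However, there is a factual slip that luckily cancels out: $f(t)=-\ln t$ is \emph{strictly convex}, not concave, since $f''(t)=1/t^2>0$. You assert that $f$ is concave and that therefore the inequalities in (\ref{thmDragomirBodiesINEQ}) reverse, but you then write down the \emph{un}-reversed chain $\le \cdots \le$, which is what you actually get for convex $f$. So the display you wrote is correct, but it is inconsistent with the sentence preceding it; had you carried out the reversal you claimed, you would have obtained the wrong direction of inequality. The fix is simply to observe that $-\ln t$ is strictly convex and differentiable, so Theorem~\ref{thmDragomirBodies} applies in its stated (non-reversed) form. The same correction applies to your closing remark about equality cases, where you again call $-\ln t$ ``strictly concave''; it is strictly convex, and the equality characterizations of the theorem apply for that reason.
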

\vskip 2mm
\noindent
If we let $f(t) = t^{\frac{p}{n+p}}$ in  Theorem \ref{thmDragomirBodies},  then  by  (\ref{def:p-affine}),  we obtain  the following affine isoperimetric  inequalities.   
Recall that $as_{0} (K) = n |K| $, and if $K$ is  $C^2_+$, then $as_{\infty} (K) = n |K^\circ| $.
\vskip 2mm
 \begin{cor}\label{aff-convex2} 
 Let $K$ be a convex body in $\mathbb{R}^n$ with $0$ in its interior.  Let $p\leq 0$, $p \neq-n$. 
Then
\begin{eqnarray}\label{GAII1}
n|K^\circ|^{\frac {p}{n+p}} |K|^{\frac {n}{n+p}}   \   \leq  \    as_p (K)    \ \leq \  n |K|  + \frac {p}{n+p}   \left(   as_p(K)   -  as_{\frac{- n^2}{2n+p}} (K) \right).
\end{eqnarray}
If  $p>0$, then
\begin{eqnarray}\label{GAII2}
n|K^\circ|^{\frac {p}{n+p}} |K|^{\frac {n}{n+p}}   \   \geq  \    as_p (K)   \ \geq \  n |K|  + \frac {p}{n+p}   \left(   as_p(K)   -  as_{\frac{- n^2}{2n+p}} (K) \right).
\end{eqnarray}
\par
\noindent
Equality holds trivially for $p=0$ or $p = \infty$,   if $K$ is  $C^2_+$.
\par
\noindent
Equality holds on the left hand side if $K$ is an ellipsoid.  If K is  $C^{2}_+$, then equality holds on the left hand side iff $K$ is an ellipsoid. 
\par
\noindent
Equality  holds  on the right hand side if  $K$ is an origin symmetric ellipsoid
such that $|K| = |B^n_2|$.  If K is  $C^{2}_+$, then equality  holds  on the right hand side iff  $K$ is an origin symmetric ellipsoid
such that $|K| = |B^n_2|$.
\end{cor}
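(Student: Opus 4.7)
The plan is to specialize Theorem~\ref{thmDragomirBodies} to the function $f(t)=t^{\alpha}$ with $\alpha=\frac{p}{n+p}$, reading off both sides explicitly as $L_p$-affine surface areas of $K$. First I would check the convexity/concavity sign: $f$ is convex iff $\alpha(\alpha-1)\ge 0$. For $p\in(-n,0]$ one has $\alpha\le 0$; for $p<-n$ one has $\alpha>1$; both cases therefore give $f$ convex, which corresponds to (\ref{GAII1}). For $p>0$ one has $\alpha\in(0,1)$, so $f$ is concave and the inequalities reverse, yielding (\ref{GAII2}). The trivial equality at $p=0$ corresponds to $\alpha=0$ (constant $f$), and $p=\infty$ corresponds to $\alpha=1$ (linear $f$), where both bounds in Theorem~\ref{thmDragomirBodies} are tight.

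Next I would compute each of the three $f$-divergence terms appearing in (\ref{thmDragomirBodiesINEQ}) using the densities $p_K(x)=\kappa_K(x)/\langle x,N_K(x)\rangle^n$ and $q_K(x)=\langle x,N_K(x)\rangle$. A direct substitution gives
\[
D_f(P_K,Q_K)=\int_{\partial K}\Bigl(\tfrac{\kappa_K}{\langle x,N_K\rangle^{n+1}}\Bigr)^{\!p/(n+p)}\langle x,N_K\rangle\,d\mu_K=as_p(K),
\]
using that $(n+1)\tfrac{p}{n+p}-1=\tfrac{n(p-1)}{n+p}$ and comparing with (\ref{def:p-affine}). Since $f'(t)=\tfrac{p}{n+p}t^{-n/(n+p)}$, the same kind of bookkeeping on exponents shows
\[
D_{f'}(P_K,Q_K)=\tfrac{p}{n+p}\,as_{q}(K),\qquad q=\tfrac{-n^2}{2n+p},
\]
where $q$ is chosen so that $\tfrac{q}{n+q}=-\tfrac{n}{n+p}$. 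Finally, $D_{f'}\!\left(\tfrac{P_K^2}{Q_K},P_K\right)=\int f'(p_K/q_K)\,p_K\,d\mu_K$ simplifies to $\tfrac{p}{n+p}\,as_p(K)$ by the same calculation.

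Assembling these three evaluations inside (\ref{thmDragomirBodiesINEQ}), and noting $f(|K^\circ|/|K|)\cdot n|K|=n|K^\circ|^{p/(n+p)}|K|^{n/(n+p)}$ and $f(1)=1$, one obtains (\ref{GAII1}) for $f$ convex (i.e.\ $p\le 0$, $p\ne -n$) and the reversed chain (\ref{GAII2}) for $f$ concave (i.e.\ $p>0$). The equality characterizations are inherited verbatim from Theorem~\ref{thmDragomirBodies}: the left bound is saturated by ellipsoids (and only by them when $K$ is $C^{2}_+$, via Petty's characterization), and the right bound is saturated by origin-symmetric ellipsoids with $|K|=|B_2^n|$ (and only by them in the $C^{2}_+$ case).

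The only delicate point is the exponent arithmetic in identifying $D_{f'}(P_K,Q_K)$ with a genuine $L_q$-affine surface area; I would verify it by solving $\tfrac{q}{n+q}=-\tfrac{n}{n+p}$ for $q$ and cross-checking that the exponent of $\langle x,N_K\rangle$ also matches the prescription $\tfrac{n(q-1)}{n+q}$ of (\ref{def:p-affine}). Everything else is a direct substitution, so no additional geometric input beyond Theorem~\ref{thmDragomirBodies} and Petty's theorem is needed.
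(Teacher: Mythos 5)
Your proposal is correct and follows essentially the same route as the paper, which obtains Corollary~\ref{aff-convex2} by specializing Theorem~\ref{thmDragomirBodies} to $f(t)=t^{p/(n+p)}$ and identifying each divergence term with an $L_q$-affine surface area via the exponent bookkeeping you carried out. Your convexity/concavity case split, the identification $q=\tfrac{-n^2}{2n+p}$ from $\tfrac{q}{n+q}=-\tfrac{n}{n+p}$, and the inherited equality characterizations all match the paper's (unwritten but implied) derivation.
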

\vskip 2mm
\noindent
\textbf{Remark.} 
The $L_p$-affine isoperimetric inequalities state that 
for $p \geq 0$
\begin{equation*}\label{pasa1}
 \frac{as_p(K)}{as_p(B^n_2)}\leq \left(\frac{|K|}{|B^n_2
|}\right)^{\frac{n-p}{n+p}}, 
\end{equation*}
and for $-n<p \leq 0$,
\begin{equation*}\label{pasa2}
\frac{as_p(K)}{as_p(B^n_2)}\geq
\left(\frac{|K|}{|B^n_2|}\right)^{\frac{n-p}{n+p}}.  \    
\end{equation*}
Equality holds trivially if $p=0$.
In both cases
equality  holds for $p \ne 0$ if and only if $K$ is an ellipsoid. 
If $p < -n$ and $K$ is $C^2_+$, then
\begin{equation*} \label{pasa3}
c^{\frac{np}{n+p}}\left(\frac{|K|}{|B^n_2
|}\right)^{\frac{n-p}{n+p}} \leq \frac{as_p(K)}{as_p(B^n_2 )}.
\end{equation*}
These inequalities  were proved by Lutwak \cite{Lutwak1996} for $p > 1$ and for all other $p$ by Werner and Ye
\cite{WernerYe2008} .
\par
\noindent
In the case of a  $0$-symmetric convex body $K$, i.e., $K=-K$, the left hand sides of (\ref{GAII1}) and (\ref{GAII2}),  together with the  Blaschke Santal\'o inequality and its equality characterizations,  (respectively the inverse  Santal\'o inequality \cite{Bourgain-Milman, Kuperberg, Nazarov})  imply  these  inequalities and their equality characterizations.
Moreover, in the last inequality, we remove the $C^2_+$ on $K$ of \cite{WernerYe2008}.
\par
\noindent
Another consequence of Corollary \ref{aff-convex2} are the following 
 Blaschke Santal\'o  type inequalities, which were originally proved in \cite{WernerYe2008} by different methods.  
  \begin{cor}
  Let $K$ be a convex body in $\mathbb{R}^n$ with $0$ in its interior.  
\par
If $p\geq 0$, then
$as_p (K) \ as_p (K^\circ) \leq n^2 |K| |K^\circ|$. 
\par
If $p<0$, $p \neq -n$, then  
$
as_p (K) \  as_p (K^\circ) \geq n^2 |K| |K^\circ|$. 
\par
\noindent
Equality holds trivially for $p=0$ or $p = \infty$,   if $K$ is in $C^2_+$.
\par
\noindent
Equality also holds if $K$ is an ellipsoid.  If K is in $C^{2}_+$, then equality holds  iff $K$ is an ellipsoid. 
 \end{cor}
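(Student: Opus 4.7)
The proof should follow directly from the left-hand inequalities of Corollary \ref{aff-convex2} by applying them to both $K$ and its polar $K^\circ$ and multiplying the resulting bounds. The plan is to treat the two regimes ($p\ge 0$ and $p<0$, $p\ne -n$) in parallel, since Corollary \ref{aff-convex2} gives the same inequality with reversed direction depending on the sign of $p$.

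First I would recall the left-hand inequality of Corollary \ref{aff-convex2}: for $p \le 0$, $p \ne -n$,
\[
as_p(K) \;\ge\; n\,|K^\circ|^{\frac{p}{n+p}}\,|K|^{\frac{n}{n+p}},
\]
while for $p>0$ the inequality is reversed. Applying this to the convex body $K^\circ$ (which has $0$ in its interior, with $(K^\circ)^\circ = K$) and the same value of $p$, I obtain
\[
as_p(K^\circ) \;\ge\; n\,|K|^{\frac{p}{n+p}}\,|K^\circ|^{\frac{n}{n+p}}
\]
for $p \le 0$, $p\ne -n$, and the reversed inequality for $p>0$.

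Next, multiplying the two inequalities and combining the exponents on $|K|$ and $|K^\circ|$, I get
\[
|K|^{\frac{n}{n+p}+\frac{p}{n+p}}\,|K^\circ|^{\frac{p}{n+p}+\frac{n}{n+p}}
\;=\;|K|\cdot |K^\circ|,
\]
which yields $as_p(K)\,as_p(K^\circ) \ge n^2 |K|\,|K^\circ|$ when $p<0$, $p\ne -n$, and the reversed inequality $as_p(K)\,as_p(K^\circ)\le n^2 |K|\,|K^\circ|$ when $p \ge 0$. The trivial cases $p=0$ (where $as_0(K)=n|K|$) and $p=\infty$ (where $as_\infty(K)=n|K^\circ|$ for $C^2_+$ bodies) give equality automatically.

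For the equality characterization, I would appeal directly to the equality conditions in Corollary \ref{aff-convex2}: equality in each factor inequality holds when the corresponding body (respectively $K$ and $K^\circ$) is an ellipsoid, and iff it is an ellipsoid under the $C^2_+$ assumption. Since the polar of an ellipsoid is again an ellipsoid, both conditions are met simultaneously exactly when $K$ is an ellipsoid. I do not foresee any real obstacle here; the only mild subtlety is the regularity transfer to $K^\circ$ in the $C^2_+$ case, but this is standard since $C^2_+$ convex bodies have $C^2_+$ polars (the Gauss map is a diffeomorphism from $\partial K$ to $\partial K^\circ$ composed with the radial identification).
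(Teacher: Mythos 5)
Your proof is correct and follows essentially the same route the paper has in mind: the statement is presented as an immediate consequence of Corollary~\ref{aff-convex2}, and multiplying the left-hand bound for $K$ with the corresponding bound for $K^\circ$ (noting that the exponents $\frac{p}{n+p}$ and $\frac{n}{n+p}$ sum to $1$) is precisely that. The one cosmetic difference is that the paper's analogous argument (in the proof of Corollary~\ref{FBS}) reaches the bound on the second factor through the duality relation $as_p(K)=as_{n^2/p}(K^\circ)$ applied to $K$, rather than by invoking Corollary~\ref{aff-convex2} directly for $K^\circ$ as you do; these are equivalent, and your handling of the equality case (equality forced in each factor, with $K^\circ$ inheriting $C^2_+$ from $K$) is sound.
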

\vskip 2mm
\noindent
\textbf{Remark.}  
Similarly to the Remarks after  Corollary \ref{aff3}, we have the following consequences of  Corollary \ref{aff-convex2} for a $C^2_+$ convex body $K$:
\par
\noindent
(i)  For $ p >0$, 
\begin{equation*}
 as_p (K) \ \leq \ \left(\frac{p}{n+p} \right)  as_{\infty} (K) +  \left( \frac{n}{n+p} \right)
 as_0 (K) . 
\end{equation*}
The duality relation $as_p(K) =  as_{\frac{n^2}{p}} (K^\circ)$ of \cite{WernerYe2008} then yields
\begin{equation*}
 as_p (K^\circ) \ \leq \ \left(  \frac{p}{n+p}  \right)  as_{0} (K) +  \left( \frac{n}{n+p} \right)  as_{\infty} (K) .
\end{equation*}
For $ p <0$, those  inequalities are reversed.
\par
\noindent
(ii) For $ p \in (-\infty , -n)$,
\begin{eqnarray*}\label{differencebodies}
 as_{\infty} (K) -  as_{0} (K)  \  \leq \   as_p(K)   -  as_{\frac{- n^2}{2n+p}} (K).
\end{eqnarray*}
For $ p > -n$, the inequality is reversed.
\par
\noindent
Note also that similar results hold for  the dual body $K^\circ$   by the above duality relation.
\par
\noindent
(iii)  For $p=n$,  Corollary \ref{aff-convex2}  yields
\begin{eqnarray*}
(as_{n} (K))^2  \  \leq \   as_0 (K) \  as_{\infty} (K).
\end{eqnarray*}
\vskip 5mm
\noindent
In \cite{CaglarWerner}, it was proved that  for normalized densities
\begin{eqnarray*}
D_f(P_{\varphi_K}, Q_{\varphi_K}) =   D_f(P_K, Q_K ) 
\end{eqnarray*} 
Now if we apply $ \varphi_K= \exp \left( -\frac{\|\cdot\|_K^2}{2} \right) $ to the inequality (\ref{main2}), we obtain similarly
$$
 D_f(P_K, Q_K ) \geq \frac{f'' (1)}{2} \  \left(  \int   \left|   \frac{ |B^n_2| \ e^{-\frac{\|z\|_K^2}{2} }  }{(2\pi)^{\frac{n}{2}} }  \left( \frac{ \kappa_K (z) }{|K^{\circ}| \langle z,  N_K(z)\rangle ^{n+1} }  -  \frac{1}{|K|} \right)  \right|  dz  \right)^2 .
$$
\noindent
The relation between the normalized cone measure $Q_K$ and the Hausdorff measure $\mu_K$ on $\partial K$ is given by 
$$
dQ_K(x)=\frac{\langle x,N_K(x)\rangle d\mu_K(x)}{n|K|}.
$$
We integrate in polar coordinates with respect to the normalized cone measure  $Q_K$  of  $K$. Thus, if we write $x=r z$, with $z \in\partial K$, then $dx= n |K |r^{n-1}dr dQ_K(z)$. We also use that  the map  $x\mapsto\det \, {\nabla^2 \psi (x)}$ is $0$-homogeneous. So, we obtain, 
\begin{eqnarray*}
 D_f(P_K, Q_K ) \hskip -3mm  & \geq&  \hskip -3mm  \frac{f'' (1)}{2}  \frac{ n^2 |K|^2  |B^n_2|^2 }{(2\pi)^n } \hskip -1mm   \left(  \int_0^{+\infty} \hskip -5mm r^{n-1}e^\frac{-r^2}{2}dr  \hskip -2mm  \int_{\partial K}  \hskip -1mm   \left|   \frac{ \kappa_K (z) }{|K^{\circ}| \langle z,  N_K(z)\rangle ^{n+1} }  -  \frac{1}{|K|}   \right|  dQ_K(z)  \hskip -1mm   \right)^2   \nonumber \\ 
&=&  \frac{f'' (1)}{2 } |K|^2  \left(  \int_{\partial K}   \left|  \frac{ \kappa_K (z) }{|K^{\circ}| \langle z,  N_K(z)\rangle ^{n+1} }  -  \frac{1}{|K|}   \right|    \ dQ_K(z)  \right)^2   \nonumber \\
&=&   \frac{f'' (1)}{2 }   \left(  \int_{\partial K}   \left|  \frac{ \kappa_K (z) }{|K^{\circ}| \langle z,  N_K(z)\rangle ^{n+1} }  -  \frac{1}{|K|}   \right|    \  \frac{ \langle z,  N_K(z)\rangle }{n} d\mu_K(z)   \right)^2  \nonumber \\
&=&  \frac{f'' (1)}{2 }   \left(  \int_{\partial K}   \left|  \frac{ \kappa_K (z) }{ n |K^{\circ}| \langle z,  N_K(z)\rangle ^{n} }  -  \frac{ \langle z,  N_K(z)\rangle }{n |K|}   \right|    d\mu_K(z)   \right)^2 \nonumber \\  &= & \frac{f'' (1)}{2 }  \ V^2 (P_K , Q_K).
\end{eqnarray*}
\vskip 2mm
\noindent
Thus we obtained the following Pinsker type inequality for convex bodies.
 \begin{cor}
Let  $K$ be a convex body in $\mathbb{R}^n$ with $0$ in its interior.  And let $f: (0, \infty) \rightarrow \R$ be convex  and $f(1) =0$. 
Suppose that the convex function $f$ is differentiable up to order 3 at 1 with $f''(1) >0$ and the following inequality holds
\begin{eqnarray*}
\bigg( f(u) - f'(1) (u-1) \bigg) \left( 1 - \frac{f'''(1)}{3 f''(1)}(u-1) \right) \ \geq \ \frac{f''(1)}{2} (u-1)^2 .
\end{eqnarray*}
Then
\begin{eqnarray*}
 D_f(P_K, Q_K )  \geq  \frac{f'' (1)}{2 }  \ V^2 (P_K , Q_K).
\end{eqnarray*}
\par
\noindent
If $f$ is concave, the inequalities are reversed. Equality also holds if $K$ is an ellipsoid.
 \end{cor}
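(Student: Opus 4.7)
The plan is to specialize the Pinsker-type inequality (\ref{main2}) for log-concave functions to the particular function $\varphi_K(x) = \exp(-\|x\|_K^2/2)$ and then translate the resulting functional inequality back into the geometric language of $\partial K$. Since (\ref{main2}) was derived from Gilardoni's theorem under exactly the assumptions imposed on $f$ in the corollary, the only work is the change-of-variables bookkeeping that converts Lebesgue integrals over $\R^n$ into boundary integrals against $\mu_K$.

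The key steps I would carry out in order are as follows. First, recall for $\psi(x) = \|x\|_K^2/2$ the pointwise identity $\det \nabla^2 \psi(x) = \kappa_K(z)/\langle z, N_K(z)\rangle^{n+1}$ on the radial ray $x = rz$, $z \in \partial K$ (see (\ref{hesse})), together with $\langle \nabla\psi(x), x\rangle = 2\psi(x)$ from (\ref{grad=psi}) and the Gaussian formulas (\ref{integralphi}) giving $\int e^{-\psi} = (2\pi)^{n/2}|K|/|B_2^n|$ and $\int e^{-\psi^*} = (2\pi)^{n/2}|K^\circ|/|B_2^n|$. Second, for the left-hand side invoke the already-established identity $D_f(P_{\varphi_K}, Q_{\varphi_K}) = D_f(P_K, Q_K)$ for the normalized densities, proved in \cite{CaglarWerner}. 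Third, pass to polar coordinates $x = rz$ with $dx = n|K|\,r^{n-1}\,dr\,dQ_K(z)$ using $dQ_K(z) = \langle z, N_K(z)\rangle\, d\mu_K(z)/(n|K|)$, and exploit $0$-homogeneity of $\det \nabla^2\psi$ to separate the radial integral $\int_0^\infty r^{n-1} e^{-r^2/2}\,dr$ from the angular one. This factor combines with $|B_2^n|/(2\pi)^{n/2}$ and the volume normalizations to cancel cleanly, leaving
\[
\int_{\partial K} \left| \frac{\kappa_K(z)}{n|K^\circ|\langle z, N_K(z)\rangle^{n}} - \frac{\langle z, N_K(z)\rangle}{n|K|} \right| d\mu_K(z) = V(P_K, Q_K)
\]
inside the square on the right, by the very definitions (\ref{densities}) and (\ref{PQ}).

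The main obstacle is not conceptual but bookkeeping: one must verify that the constants from $\int \varphi_K$, $\int \varphi_K^\circ$, the Gaussian radial moment, and the cone-measure conversion factor $n|K|$ conspire to produce exactly the factor $f''(1)/2$ and to reduce the functional $L^1$-difference in (\ref{main2}) to $V(P_K, Q_K)^2$ with no stray powers of $|K|$ or $(2\pi)^{n/2}$. Once this is carried out, the inequality is immediate. For the equality case, if $K = \mathcal{E}$ is an ellipsoid then $\|x\|_K^2 = \langle A x, x\rangle$ for some positive definite $A$, so $\varphi_K(x) = e^{-\langle Ax,x\rangle/2}$ falls into the equality class of (\ref{main2}) noted after that inequality, and hence equality persists after the (constant-only) change of variables.
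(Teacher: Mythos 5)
Your proposal matches the paper's own proof step for step: apply inequality (\ref{main2}) to $\varphi_K = \exp(-\|\cdot\|_K^2/2)$, use the identity $D_f(P_{\varphi_K},Q_{\varphi_K}) = D_f(P_K,Q_K)$ from \cite{CaglarWerner}, pass to polar coordinates against the cone measure $Q_K$ using the $0$-homogeneity of $\det\nabla^2\psi$ and the curvature identity (\ref{hesse}), and verify that the Gaussian radial moment $\int_0^\infty r^{n-1}e^{-r^2/2}\,dr = (2\pi)^{n/2}/(n|B_2^n|)$ cancels the normalization constants to leave exactly $V(P_K,Q_K)$. The equality discussion via $\varphi_K(x)=e^{-\langle Ax,x\rangle/2}$ for (origin-centered, as the paper assumes throughout) ellipsoids is likewise the paper's argument.
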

\par
\noindent 
Similarly, Corollary \ref{PinskerIneqFunctional}  (the case $f(t) = - \ln t $) becomes
$$
D_{KL} (Q_K || P_K) \geq  \frac{1}{2}  \ V^2 (P_K , Q_K).
$$

\vskip 2mm 
\noindent 
Umut Caglar\\
{\small Department of Mathematics and Statistics} \\
{\small Florida International University} \\
{\small Miami, FL 33199, U. S. A. }   \\
{\small \tt ucaglar@fiu.edu}\\ \\
\vskip 3mm
\noindent
Alexander V. Kolesnikov  \\
{\small Faculty of mathematics} \\
{\small National Research Institute Higher School of Economics} \\
{\small Moscow, Russia}   \\
{\small \tt sascha77@mail.ru}\\ \\
\vskip 3mm

\vskip 3mm
\noindent
Elisabeth Werner\\
{\small Department of Mathematics \ \ \ \ \ \ \ \ \ \ \ \ \ \ \ \ \ \ \ Universit\'{e} de Lille 1}\\
{\small Case Western Reserve University \ \ \ \ \ \ \ \ \ \ \ \ \ UFR de Math\'{e}matique }\\
{\small Cleveland, Ohio 44106, U. S. A. \ \ \ \ \ \ \ \ \ \ \ \ \ \ \ 59655 Villeneuve d'Ascq, France}\\
{\small \tt elisabeth.werner@case.edu}\\ \\

\end{document}